\newtheorem{thm}{Theorem}[section]
\newtheorem{cor}[thm]{Corollary}
\newtheorem{lem}[thm]{Lemma}
\newtheorem{prop}[thm]{Proposition}
\newtheorem{cond}[thm]{Condition}
\newtheorem{thmintro}{Theorem}
\newtheorem{conj}[thmintro]{Conjecture}
\newtheorem{ex}[thm]{Example}
\newcommand{\Z}{\mathbb Z}
\newcommand{\Q}{\mathbb Q}
\newcommand{\R}{\mathbb R}
\newcommand{\C}{\mathbb C}
\newcommand{\mf}{\mathfrak}
\newcommand{\mc}{\mathcal}
\newcommand{\mb}{\mathbf}
\newcommand{\mh}{\mathbb}
\def\Irr{{\rm Irr}}
\newcommand{\mr}{\mathrm}
\newcommand{\ind}{\mathrm{ind}}
\newcommand{\enuma}[1]{\begin{enumerate}[\textup{(}a\textup{)}] {#1} \end{enumerate}}
\newcommand{\Fr}{\mathrm{Frob}}
\newcommand{\Sc}{\mathrm{sc}}
\newcommand{\ad}{\mathrm{ad}}
\newcommand{\cusp}{\mathrm{cusp}}
\newcommand{\nr}{\mathrm{nr}}
\newcommand{\Rep}{\mathrm{Rep}}
\newcommand{\Res}{\mathrm{Res}}
\newcommand{\der}{\mathrm{der}}
\newcommand{\red}{\mathrm{red}}
\newcommand{\Mod}{\mathrm{Mod}}
\newcommand{\fMod}{\mathrm{Mod}_{\mr f}}
\newcommand{\Hom}{\mathrm{Hom}}
\newcommand{\End}{\mathrm{End}}
\begin{document}

\title[Parameters of Hecke algebras]{Parameters of Hecke algebras\\ 
for Bernstein components of $p$-adic groups}
\date{\today}
%\thanks{The author was supported by a Vidi grant "A Hecke algebra approach to the 
%local Langlands correspondence" (nr. 639.032.528) from Nederlands Wetenschappelijk Onderzoek (NWO)}
\subjclass[2010]{Primary 22E50, Secondary 20G25, 20C08}
\keywords{representation theory, reductive groups, Hecke algebras, non-archimedean local fields}
\maketitle

\begin{center}
{\Large Maarten Solleveld} \\[1mm]
IMAPP, Radboud Universiteit Nijmegen\\
Heyendaalseweg 135, 6525AJ Nijmegen, the Netherlands \\
email: m.solleveld@science.ru.nl \\
%Orcid: 0000-0001-6516-6739 
\end{center}

\begin{abstract}
Let $G$ be a reductive group over a non-archimedean local field $F$. Consider an arbitrary
Bernstein block $\Rep (G)^{\mf s}$ in the category of complex smooth $G$-representations.
In earlier work the author showed that there exists an affine Hecke algebra $\mc H (\mc O,G)$
whose category of right modules is closely related to $\Rep (G)^{\mf s}$. In many cases this is
in fact an equivalence of categories, like for Iwahori-spherical representations. 

In this paper we study the $q$-parameters of the affine Hecke algebras $\mc H (\mc O,G)$. 
We compute them in many cases, in particular for principal series representations of 
quasi-split groups and for classical groups. 

Lusztig conjectured that the $q$-parameters are always integral powers of the cardinality of
the residue field of $F$, and that they coincide with the $q$-parameters coming from some 
Bernstein block of unipotent representations. We reduce this conjecture to the case of 
absolutely simple $p$-adic groups, and we prove it for most of those.
\end{abstract}

\tableofcontents

\section*{Introduction}

It is well-known that affine Hecke algebras play an important role in the representation
theory of a reductive group $G$ over a non-archimedean local field $F$. In many cases a Bernstein 
block $\Rep (G)^{\mf s}$ in the category of smooth complex $G$-representations is equivalent with 
the module category of an affine Hecke algebra (maybe extended with some finite group). This was 
first shown for Iwahori-spherical representations \cite{IwMa,Bor} and for depth zero representations
\cite{Mor1}. Such an equivalence of categories was established for representations of $GL_n (F)$
and of inner forms of $GL_n (F)$ \cite{SeSt4,SeSt6}, by using the theory of types \cite{BuKu2}.
From there, almost the same structure has been established for inner
forms of $SL_n (F)$ \cite{ABPS2}. 

An alternative approach goes via the algebra of $G$-endomorphisms of a progenerator $\Pi^{\mf s}$
of $\Rep (G)^{\mf s}$. The category of right modules of $\End_G (\Pi^{\mf s})$ is naturally
equivalent with $\Rep (G)^{\mf s}$. Heiermann \cite{Hei2,Hei3} showed that for symplectic groups,
special orthogonal groups, unitary groups and inner forms of $GL_n (F)$, $\End_G (\Pi^{\mf s})$
is always Morita equivalent with an (extended) affine Hecke algebra. 

Recently the author generalized this to all Bernstein components of all reductive $p$-adic groups
\cite{SolEnd}. In the most general setting some subtleties have to be taken into account: the
involved affine Hecke algebra must be extended with the group algebra of a finite group, but
that group algebra might be twisted by a 2-cocycle. One of the techniques used to get there is 
localization with respect to certain sets of central characters. As a consequence the resulting 
equivalence with $\Rep (G)^{\mf s}$ works for finite length representations, but cannot be
guaranteed entirely for representations of infinite length. (No concrete counterexamples are
known, but the place to look would be the non-split inner forms of $SL_n (F)$.) Nevertheless, 
the bottom line is that $\Rep (G)^{\mf s}$ is largely governed by an affine Hecke algebra from 
$\End_G (\Pi^{\mf s})$. 

Let $M$ a Levi factor $M$ of a parabolic subgroup $P$ of $G$ such that $\Rep (G)^{\mf s}$ arises 
by parabolic induction from a supercuspidal representation $\sigma$ of $M$. We denote the 
variety of unramified twists of $\sigma$ by $\mc O \subset \Irr (M)$, and the affine Hecke
algebra described above by $\mc H (\mc O,G)$. 
If at the same time a $\mf s$-type $(J,\rho)$ is known, then its Hecke algebra 
\[
\mc H (G,J,\rho) \cong \End_G (\ind_J^G (\rho))^{op}
\] 
is Morita equivalent with $\End_G (\Pi^{\mf s})^{op}$. In fact \cite[Appendix A]{BaSa} shows that 
in most cases $\mr{ind}_J^G (\rho)$ is isomorphic with $\Pi^{\mf s}$. In this setting 
$\mc H (\mc O,G)$ can also be constructed from $\mc H (G,J,\rho)$.

The next question is of course: what does
$\mc H (\mc O,G)$ look like? Like all affine Hecke algebras, it is determined by a root datum
and some $q$-parameters. The lattice $X$ (from that root datum) can be identified with the 
character lattice of $\mc O$, once the latter has been made into a complex torus by choosing
a base point. The root system $\Sigma_{\mc O}^\vee$ (also from the root datum) is contained in
$X$ and determined by the reducibility points of the family of representations
$\{ I_P^G (\sigma') : \sigma' \in \mc O \}$. Then $\mc H (\mc O,G)$ contains a maximal commutative
subalgebra $\C [X] \cong \C [\mc O]$ and a finite dimensional Iwahori--Hecke algebra
$\mc H (W(\Sigma_{\mc O}^\vee), q_F^{\lambda})$ such that
\[
\mc H (\mc O,G) = \C [\mc O] \otimes_\C \mc H (W(\Sigma_{\mc O}^\vee), q_F^{\lambda})
\text{ as vector spaces.}
\]
Here $q_F$ denotes the cardinality of the residue field of $F$, while $\lambda$ will be defined
soon. For every $X_\alpha \in \Sigma_{\mc O}^\vee$ there is a $q_\alpha \in \R_{>1}$ such that
\[
I_P^G (\sigma') \text{ is reducible for all } \sigma' \in \mc O \text{ with }
X_\alpha (\sigma' ) = q_\alpha .
\]
Sometimes there is also a number $q_{\alpha*} \in (1,q_\alpha]$ with the property
\[
I_P^G (\sigma') \text{ is reducible for all } \sigma' \in \mc O \text{ with }
X_\alpha (\sigma' ) = -q_{\alpha *} .
\]
When such a real number does not exist, we put $q_{\alpha *} = 1$. These $q$-parameters $q_\alpha$
and $q_{\alpha *}$ appear in the Hecke relations of $\mc H (W(\Sigma_{\mc O}^\vee), q_F^{\lambda})$:
\[
0 = (T_{s_\alpha} + 1)(T_{s_\alpha} - q_F^{\lambda (\alpha)}) \quad \text{with} \quad
q_F^{\lambda (\alpha)} = q_\alpha q_{\alpha *} \in \R_{>1}.
\]
Further, we define $\lambda^* (\alpha) \in \R_{\geq 0}$ by 
\[
q_F^{\lambda^* (\alpha)} = q_\alpha q_{\alpha *}^{-1}.
\]
Knowing $q_\alpha, q_{\alpha*}$ is also equivalent to knowing the poles of the Harish-Chandra
$\mu$-function on $\mc O$ associated to $\alpha$. 
See Section \ref{sec:progen} for more details on the above setup.

The representation theory of $\mc H (\mc O,G)$ depends in a subtle way on the $q$-parameters
$q_\alpha, q_{\alpha*}$ for $X_\alpha \in \Sigma_{\mc O}^\vee$, so knowing them helps to
understand $\Rep (G)^{\mf s}$. That brings us to the main goal of this paper: 
\emph{determine the $q$-parameters of $\mc H (\mc O,G)$ for as many Bernstein blocks
$\Rep (G)^{\mf s}$ as possible}.

Like for all affine Hecke algebras, there are some constraints on the $q_\alpha$ and $q_{\alpha *}$: 
\begin{itemize}
\item if $X_\alpha, X_\beta \in \Sigma_{\mc O}^\vee$ are $W(\Sigma_{\mc O}^\vee)$-associate,
then $q_\alpha = q_\beta$ and $q_{\alpha *} = q_{\beta *}$,
\item $q_{\alpha *} > 1$ is only possible if $X_\alpha$ is a short root in a type $B_n$ root
system.
\end{itemize}
Notice that $q_\alpha$ and $q_{\alpha *}$ can be expressed in terms of the ``$q$-base" $q_F$
and the labels $\lambda (\alpha), \lambda^* (\alpha)$. It has turned out \cite{KaLu,SolAHA}
that the representation theory of an affine Hecke algebra hardly changes if one replaces
$q_F$ by another $q$-base (in $\R_{>1}$) while keeping all labels fixed. If we replace the
$q$-base $q_F$ by $q_F^r$ and $\lambda (\alpha), \lambda^* (\alpha)$ by 
$\lambda (\alpha) / r, \lambda^* (\alpha) /r$ for some $r \in \R_{>0}$, then $q_\alpha$ and
$q_{\alpha *}$ do not change, and in fact $\mc H (\mc O,G)$ is not affected at all.
In this way one can always scale one of the labels to 1.
Hence the representation theory of $\mc H (\mc O,G)$ depends mainly on the ratios between the 
labels $\lambda (\alpha), \lambda^* (\alpha)$ for $X_\alpha \in \Sigma_{\mc O}^\vee$.
\begin{itemize}
\item For irreducible root systems of type $A_n, D_n$ and $E_n$, $\lambda (\alpha) = 
\lambda^* (\alpha) = \lambda (\beta)$, for any roots $X_\alpha, X_\beta \in 
\Sigma_{\mc O}^\vee$. There is essentially only one label $\lambda (\alpha)$, and it can
be scaled to 1 by fixing $q_\alpha$ but replacing $q_F$ by $q_\alpha$.
\item For the irreducible root systems $C_n, F_4$ and $G_2$, again $\lambda (\alpha)$ always
equals $\lambda^* (\alpha)$. There are two independent labels $\lambda (\alpha)$: one for the
short roots and one for the long roots.
\item For an irreducible root system of type $B_n$, $\lambda^* (\alpha)$ need not equal 
$\lambda (\alpha)$ if $X_\alpha$ is short. Here we have three independent labels: $\lambda (\beta)$
for $X_\beta$ long, $\lambda (\alpha)$ for $X_\alpha$ short and $\lambda^* (\alpha)$ for
$X_\alpha$ short.
\end{itemize} 
Lusztig \cite{Lus-open} has conjectured:

\begin{conj}\label{conj:1}
Let $G$ be a reductive group over a non-archimedean local field $F$, with an arbitrary Bernstein
block $\Rep (G)^{\mf s}$. Let $\Sigma_{\mc O,j}^\vee$ be an irreducible component of the root
system $\Sigma_{\mc O}^\vee$ underlying $\mc H (\mc O,G)$. Then:
\begin{itemize}
\item[(i)] the $q$-parameters $q_\alpha, q_{\alpha *}$ are powers of $q_F$, except that 
for a short root  $\alpha$ in a type $B_n$ root system the $q$-parameters can also be powers of 
$q_F^{1/2}$ (and then $q_\alpha q_{\alpha^*}^{\pm 1}$ is still a power of $q_F$).
\item[(ii)] the label functions $\lambda, \lambda^*$ on $\Sigma_{\mc O,j}^\vee$ agree with those
obtained in the same way from a Bernstein block of unipotent representations of some adjoint 
simple $p$-adic group, as in \cite{Lus-Uni,Lus-Uni2}.
\end{itemize}
\end{conj}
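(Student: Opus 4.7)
The plan is to establish the conjecture in two phases: first reduce to absolutely simple $F$-groups, then verify the conclusion directly for as many such groups as possible.

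For the reduction I would exploit two functorialities of the construction $\Rep(G)^{\mf s} \mapsto \mc H(\mc O,G)$. First, direct products: if $G = G_1 \times G_2$ then every Bernstein block of $G$ is a tensor product of blocks of the $G_i$, the progenerators and their endomorphism algebras factor accordingly, and $\Sigma_\mc O^\vee$ is a disjoint union of the two component root systems; hence irreducible components are inherited from the factors and carry the same $q$-parameters. Second, Weil restriction: for a finite separable $F'/F$ and $G = \Res_{F'/F}(G')$ one has $G(F) = G'(F')$, the Bernstein blocks coincide, and $\mc H(\mc O,G) \cong \mc H(\mc O',G')$ as abstract algebras; only the $q$-base passes from $q_{F'}$ to $q_F^{[F':F]}$, which preserves both the integrality of exponents in (i) and the candidate label function in (ii). Combined with a standard structural argument on almost simple $F$-groups, this reduces Conjecture \ref{conj:1} to absolutely simple groups over $F$.

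For the direct verification I would partition the remaining cases by the type of $G$. If $G$ is an inner form of $GL_n(F)$ or of $SL_n(F)$, the types of \cite{BuKu2,SeSt4,SeSt6,ABPS2} present $\mc H(\mc O,G)$ as an affine Hecke algebra of type $A$ whose single $q$-parameter is manifestly a power of $q_F$; matching with unipotent blocks of $PGL_m(F)$ then handles (ii). For classical groups I would use Heiermann's description of $\End_G(\Pi^{\mf s})$ together with the computation of the poles of the Harish-Chandra $\mu$-function on $\mc O$; the resulting $q_\alpha, q_{\alpha *}$ depend only on a few invariants of the cuspidal $\sigma$ of $M$ and can be compared term by term with Lusztig's tables in \cite{Lus-Uni,Lus-Uni2}. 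For principal series of quasi-split groups, Roche-style constructions together with Shahidi's reducibility criteria yield powers of $q_F$ directly. Depth-zero blocks on any group reduce, through the Moy--Prasad / Morris formalism, to unipotent Bernstein blocks on adjoint simple groups over the residue field, where the conjecture can be read off Lusztig's classification.

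The main obstacle is positive-depth, non-principal-series Bernstein blocks for the absolutely simple exceptional groups, where no general Hecke algebra structure theorem is yet available. Already the integrality statement (i) is not automatic; to verify (ii) one must exhibit, for each irreducible component of $\Sigma_{\mc O}^\vee$, a unipotent Hecke algebra of some adjoint simple group with matching labels. I would proceed by enumerating the irreducible root systems that can appear, using the a priori constraints already recorded in the excerpt---namely that $q_{\alpha *} > 1$ forces a short root in a $B_n$ factor and that $W(\Sigma_\mc O^\vee)$-associate roots share both $q$-parameters---and extracting the missing information from Yu / Kim--Yu type data where applicable. A small residue of exceptional cases is likely to remain open; this accounts for the ``most of those'' qualifier in the abstract.
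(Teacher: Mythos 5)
Your overall architecture (reduce to absolutely simple groups, then verify case by case via types for inner forms of $GL_n$, Heiermann--M\oe glin for classical groups, Roche-style computations for principal series, and Lusztig's tables for the unipotent side) is the same as the paper's, and like the paper you correctly anticipate that some exceptional and quaternionic cases stay open. But your reduction step has a genuine gap. Direct products and Weil restriction are the easy half; the essential half is passing through \emph{central isogenies}, e.g.\ from $G$ to the simply connected cover $G_\Sc$ of $G_\der$, or between $SO_n$, $\mr{Spin}_n$ and $\mr{GSpin}_n$. Under such a map $\eta$ the $\mu$-functions pull back up to a constant (Proposition \ref{prop:2.2}), but the coordinate $X_\alpha$ is built from the generator $h_\alpha^\vee$ of $(M_\sigma^2\cap M_\alpha^1)/M^1$, and this lattice genuinely changes: one gets $H_M(h_\alpha^\vee)=N_\alpha\,\mf a_\eta (H_{\tilde M}(h_{\tilde\alpha}^\vee))$ with $N_\alpha\in\{1/2,1,2\}$ (Proposition \ref{prop:2.4}). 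When $N_\alpha\neq 1$ an irreducible component of type $B_n$ is exchanged with one of type $C_n$ and $(q_\alpha,q_{\alpha*})=(q,1)$ becomes $(q^{1/2},q^{1/2})$ or vice versa; one must then check separately that the modified label functions still occur in Lusztig's list (Proposition \ref{prop:2.4}.e). This already happens for $SL_2\to PGL_2$ in the unramified principal series, so no ``standard structural argument'' avoids it; without tracking $N_\alpha$ your reduction to absolutely simple groups does not close, and part (i) of the conjecture (the allowance for powers of $q_F^{1/2}$ on short roots in type $B_n$) is precisely about this phenomenon.

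The second gap is positive characteristic. Your treatment of classical groups rests on Heiermann's description of $\End_G(\Pi^{\mf s})$ and on the pole computations for $\mu$, which in turn rest on M\oe glin's classification of discrete series --- all of which are only available over $p$-adic fields. To cover local function fields the paper has to develop the entire close-fields transfer (Section \ref{sec:char0}): compatibility of the Kazhdan--Ganapathy isomorphisms with parabolic induction, Jacquet restriction, unitarity, temperedness, and finally with the roots $h_\alpha^\vee$ and the parameters $q_\alpha,q_{\alpha*}$ (Proposition \ref{prop:4.4}). Your proposal is silent on this. Two smaller points: the claim that depth-zero blocks reduce to unipotent blocks of adjoint groups over the residue field is not the route the paper takes and would itself require an argument matching the resulting finite Hecke algebra parameters with Lusztig's affine ones; and for the exceptional groups the paper does not invoke Yu or Kim--Yu data but instead runs through Howlett's tables of $N_{W}(W_J)/W_J$ and reduces each simple root of $\Sigma_{\mc O,\mu}$ to an inclusion $M\to M_\alpha$ of classical or type $A$ shape, which is where the remaining open cases (quaternionic orthogonal groups inside $E_7^{(2)}$, the subset $\{\alpha_2,\alpha_3\}$ in $F_4$) come from.
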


We note that the affine root systems in Lusztig's notation for affine Hecke algebras
correspond to affine extensions of our root systems $\Sigma_{\mc O}$. Below we list all possible 
label functions from \cite{Lus-Uni,Lus-Uni2}, for a given irreducible root system 
(taking a remark at the end of Paragraph \ref{par:F4} into account).
\begin{table}[h] 
\renewcommand{\tablename}{\ \hspace{-1cm} Table}
\caption{Labels for affine Hecke algebras from unipotent representations \label{fig:1}}
$\begin{array}{cccc}
\Sigma_{\mc O}^\vee & \lambda (\text{long root}) & \lambda (\text{short root})
& \lambda^* (\text{short root}) \\
\hline
A_n, D_n, E_n & - & \in \Z_{>0} & \lambda^* = \lambda \\
B_n & 1 \text{ or } 2 & \in \Z_{>0} & \in \Z_{\geq 0} \\
C_n & \in \Z_{>0} & 1 \text{ or } 2 & \lambda^* = \lambda \\
F_4 & 1 \text{ or } 2 & 1 & 1 \\
F_4 & 1 & 2 & 2 \\
F_4 & 4 & 1 & 1 \\
G_2 & 1 \text{ or } 3 & 1 & 1 \\
G_2 & 1 & 3 & 3 \\
G_2 & 9 & 1 & 1 \\
A_1 & - & 15 & 1 \\
A_1 & - & 9 & 7 
% B_2 & 3 & 3 & 1
\end{array}$
\end{table}

Conjecture \ref{conj:1}.(i) is related to a conjecture of Langlands about Harish-Chandra 
$\mu$-functions \cite[\S 2]{Sha}. For generic representations of quasi-split reductive groups over
$p$-adic fields, \cite[\S 3]{Sha} translates Conjecture \ref{conj:1}.(i) to a question about poles
of adjoint $\gamma$-factors. (We do not pursue that special case here.)

Motivation for Conjecture \ref{conj:1}.(ii) comes from the local Langlands correspondence. It is
believed \cite{AMS1} that $\Irr (G) \cap \Rep (G)^{\mf s}$ corresponds to a Bernstein component 
$\Phi_e (G)^{\mf s^\vee}$ of enhanced L-parameters for $G$. To $\Phi_e (G)^{\mf s^\vee}$ one can 
canonically associate an affine Hecke algebra $\mc H (\mf s^\vee,q_F^{1/2})$, possibly extended 
with a twisted group algebra \cite[\S 3.3]{AMS3}. It is expected that the module category of 
$\mc H (\mf s^\vee,q_F^{1/2})$ is very closely related to $\Rep (G)^{\mf s}$, at least the two 
subcategories of finite length modules should be equivalent.

The nonextended version $\mc H^\circ (\mf s^\vee,q_F^{1/2})$ of
$\mc H (\mf s^\vee,q_F^{1/2})$ can be constructed with complex geometry from a connected reductive 
group $H^\vee$ (the connected centralizer in $G^\vee$ of the image of the inertia group $\mb I_F$
under the Langlands parameter) and a cuspidal local system $\rho$ on a unipotent orbit
for a Levi subgroup $L^\vee$ of $H^\vee$. The exact same data $(H^\vee,L^\vee,\rho)$ also arise
from enhanced Langlands parameters (for some reductive $p$-adic group $G'$) which are trivial on
$\mb I_F$. By the local Langlands correspondence from \cite{Lus-Uni,Lus-Uni2,SolLLCunip,SolRamif},
a Bernstein component of such enhanced L-parameters corresponds to a Bernstein component 
$\Rep (G')^{\mf s'}$ of unipotent $G'$-representations. 

It follows that $\mc H^\circ (\mf s^\vee,q_F^{1/2})$ is isomorphic to 
$\mc H^\circ (\mf s'^\vee,q_{F'}^{1/2})$.
By \cite[Theorem 4.4]{SolLLCunip}, $\mc H^\circ (\mf s'^\vee,q_{F'}^{1/2})$ is isomorphic to
$\mc H (\mc O',G')$, which is an affine Hecke algebra associated to a Bernstein block of 
unipotent representations of $G'$. If desired one can replace $G'$ by its adjoint group, by
\cite[Lemma 3.5]{SolLLCunip} that operation changes the affine Hecke algebras a little but 
preserves the root systems and the $q$-parameters. 

Thus, if there exists a local Langlands correspondence with good properties, Conjecture \ref{conj:1}
is a consequence of what happens on the Galois side of the correspondence. Conversely, new cases
of Conjecture \ref{conj:1} might contribute to new instances of a local Langlands correspondence,
via a comparison of possible Hecke algebras on both sides as in \cite{Lus-Uni}.\\

An important and accessible class of representations is formed by the principal series 
representations of quasi-split groups $G$. When $G$ is $F$-split, the Hecke algebras for
Bernstein blocks of such representations were already analysed in \cite{Roc1} via types,
under some mild restrictions on the residual characteristic. To every root of a quasi-split
group $G$ (relative to a maximal $F$-split torus) one can associate a splitting field 
$F_\alpha$, a finite extension of $F$.

\begin{thmintro} \label{thm:B}
\textup{(see Theorem \ref{thm:3.9} and Corollary \ref{cor:3.12})} \\
Conjecture \ref{conj:1} holds for all Bernstein blocks in the principal series of a quasi-split
connected reductive group over $F$. For $X_\alpha \in \Sigma_{\mc O}^\vee$ (with one exception
in type ${}^2 A_{2n}$ that we analyse as well)
$q_{\alpha *} = 1$ and $q_\alpha$ is the cardinality of the residue field of $F_\alpha$.
\end{thmintro}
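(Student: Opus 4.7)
The plan is to reduce the computation of each pair $(q_\alpha, q_{\alpha*})$ to a rank-one situation and then exploit the detailed structure of rank-one quasi-split groups. First I would invoke the standard rank-one reduction: by the discussion of Harish-Chandra $\mu$-functions preceding the statement, the pair $(q_\alpha, q_{\alpha*})$ attached to $X_\alpha \in \Sigma_{\mc O}^\vee$ depends only on the Levi $M_\alpha$ of semisimple $F$-rank one generated by $T$ and the root subgroup for $\alpha$, and on the restriction of $\sigma$ to that Levi. Equivalently, one studies when $I_{P_\alpha \cap M_\alpha}^{M_\alpha}(\sigma')$ becomes reducible as $\sigma'$ varies over $\mc O$.

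Next I would work out the possibilities for $M_\alpha$. Since $G$ is quasi-split, the relative root $\alpha$ is a sum over a $W_F$-orbit of absolute roots, and the splitting field $F_\alpha$ is the fixed field of its stabilizer. The derived group of $M_\alpha$ is accordingly either $\mr{Res}_{F_\alpha / F} SL_2$ (``split type'') or a quasi-split but non-split rank-one unitary group $\mr{Res}_{E/F} SU(2,1)$ or $SU_{2,1}(E/F_\alpha)$ (``unitary type''), with $E$ a quadratic extension. In the split type the induction is governed by principal series of $SL_2(F_\alpha)$: reducibility of $I_B^{SL_2(F_\alpha)}(\chi)$ along the affine line of unramified twists happens precisely when $\chi$ is an unramified twist of the trivial character by $|\cdot|_{F_\alpha}^{\pm 1}$, giving $q_\alpha = q_{F_\alpha}$ and $q_{\alpha*} = 1$. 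In the unitary type I would use the classification of rank-one unitary reducibility (going back to Keys, Shahidi, Goldberg) to read off $q_\alpha, q_{\alpha*}$; in all but one case one again obtains $q_\alpha = q_{F_\alpha}$ and $q_{\alpha*} = 1$, with a single exceptional configuration where a non-trivial $q_{\alpha*} > 1$ appears, and there $q_\alpha$ and $q_{\alpha*}$ can each be a power of $q_F^{1/2}$ whose product is still a power of $q_F$.

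Having computed $(q_\alpha, q_{\alpha*})$ root-by-root, I would verify Conjecture~\ref{conj:1}.(i) by observing that $q_{F_\alpha} = q_F^{[k_{F_\alpha}:k_F]}$ is an integral power of $q_F$, and checking directly in the exceptional case that the two parameters are powers of $q_F^{1/2}$ with integer product exponent. For Conjecture~\ref{conj:1}.(ii) I would go irreducible component by irreducible component of $\Sigma_{\mc O}^\vee$ and match the resulting label function $(\lambda, \lambda^*)$ against the rows of Table~\ref{fig:1}. Components of type $A_n, D_n, E_n$ are automatic since the splitting field is constant along a $W$-orbit. For type $B_n$ and $C_n$ components one compares the possible ratios $\lambda(\text{long})/\lambda(\text{short})$ arising from the list of possible $F_\alpha$'s with the allowed unipotent ratios, and similarly for the rare $F_4, G_2$ configurations. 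The exceptional root is matched to the $B_2$ line $(3,3,1)$ of the table.

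The main obstacle is step two in the unitary type: one must pin down exactly which pairs $(q_\alpha, q_{\alpha*})$ occur for all ramification types of $E/F_\alpha$ and all choices of the supercuspidal character, and in particular identify the one configuration on which $q_{\alpha*} \neq 1$. After this the verification of part (ii) is essentially combinatorial: for each quasi-split simple group one enumerates the possible relative root systems together with the functions $\alpha \mapsto F_\alpha$, and checks that the induced $(\lambda, \lambda^*)$ appears in Table~\ref{fig:1}. The explicit (and crucial) observation that in all generic cases $q_\alpha = |k_{F_\alpha}|$ makes this bookkeeping tractable, since the labels are then controlled solely by the residue degrees of the splitting fields.
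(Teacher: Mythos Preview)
Your strategy matches the paper's: reduce to rank one via the Harish-Chandra $\mu$-function, then split into the case where $M_{\alpha,\der}$ is a restriction of scalars of $SL_2$ and the case where it is $SU_3$. Two differences are worth flagging. First, in the split case the paper does \emph{not} cite classical $SL_2$ reducibility but computes the intertwining operators $J_{P_{\pm\alpha}\mid P_{\mp\alpha}}$ by hand; the point is to avoid any restriction on the residual characteristic (present e.g.\ in Roche's work), and a nontrivial step is Proposition~\ref{prop:3.8}, which shows that when $\sigma\circ\alpha^\vee$ is a \emph{ramified} quadratic character the intertwiner has no poles at all, so $\alpha\notin\Sigma_{\mc O,\mu}$ and no parameter arises. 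Second, for the $SU_3$ case the paper does not invoke Keys--Shahidi--Goldberg but relies on Badea's thesis \cite{Bad} for the three options (i)--(iii), and in residual characteristic~2 (where \cite{Bad} does not apply) falls back on the classical-groups computation of Theorem~\ref{thm:3.2}. Finally, your matching of the exceptional case to the $B_2$ row $(3,3,1)$ of Table~\ref{fig:1} is off: case~(iii) gives $\lambda(\beta)=3$, $\lambda^*(\beta)=1$ for the short root but $\lambda(\alpha)=2$ for the long roots (cf.\ Badea's unramified table), which sits in the general $B_n$ row with $\lambda(\text{long})=2$, not the special $B_2$ row.
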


Theorem \ref{thm:B} will be employed to establish a canonical local Langlands correspondence for
principal series representations of quasi-split $F$-groups \cite{SolLLCQ}.

For parameter computations in Hecke algebras associated to more complicated Bernstein components,
we need a reduction strategy. That is the topic of Section \ref{sec:red}, which culminates in:

\begin{thmintro} \label{thm:C}
\textup{(see Corollaries \ref{cor:2.6} and \ref{cor:2.7})} \\
Conjecture \ref{conj:1} holds for $G$ if and only if it holds for its derived group $G_\der$.
If Conjecture \ref{conj:1} holds for the simply connected cover $G_\Sc$ of $G_\der$, then
it holds for $G$.
\end{thmintro}

This enables us to reduce the verification of Conjecture \ref{conj:1} to absolutely simple, 
simply connected groups. For (absolutely) simple groups quite a few results about the parameters
of Hecke algebras can be found in the literature, e.g. \cite{BuKu1,Sec,Hei1}. With our current
framework we can easily generalize those results, in particular from one group to an isogenous
group.\\

S\'echerre and Stevens \cite{Sec,SeSt4,SeSt6} determined the Hecke algebras for all Bernstein
blocks for inner forms of $GL_n (F)$. Together with Theorem \ref{thm:C} that proves Conjecture
\ref{conj:1} for all inner forms of a group of type $A$.

For classical groups (symplectic, special orthogonal, unitary) we run into the problem that
some representation theoretic results have been proven over $p$-adic fields but not (yet) over
local function fields. We overcome this with the method of close fields \cite{Kaz}, which 
Ganapathy recently generalized to arbitrary connected reductive groups \cite{Gan1,Gan2}.

\begin{thmintro} \label{thm:D}
\textup{(see Corollary \ref{cor:4.5})} \\
Let $\Rep (G)^{\mf s}$ be a Bernstein block for a reductive group $G$ over a local function
field. Then there exists a Bernstein block $\Rep (\tilde G )^{\tilde{\mf s}}$ for a reductive
group $\tilde G$ over a $p$-adic field, such that:
\begin{itemize}
\item $G$ and $\tilde G$ come from ``the same" algebraic group,
\item $\Rep (G)^{\mf s} \cong \Rep (\tilde G)^{\tilde{\mf s}}$ and
$\mc H (\mc O,G) \cong \mc H (\tilde{\mc O},\tilde G )$,
\item the parameters for both these affine Hecke algebras are the same.
\end{itemize}
\end{thmintro}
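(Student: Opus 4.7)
The plan is to invoke the method of close local fields due to Kazhdan, as generalized to arbitrary connected reductive groups by Ganapathy \cite{Gan1,Gan2}. First I would fix an upper bound $d$ on the depth of representations in $\Rep (G)^{\mf s}$; such a bound exists because the block is generated from a single supercuspidal representation $\sigma$ of a Levi $M$. Starting from a smooth integral model of $G$, one picks a $p$-adic local field $\tilde F$ which is $n$-close to $F$ for some $n \gg d$ (i.e.\ $\mc O_F / \mf p_F^n \cong \mc O_{\tilde F} / \mf p_{\tilde F}^n$) and obtains $\tilde G$ as the reductive group over $\tilde F$ attached to the same integral model. This is exactly what ``$G$ and $\tilde G$ come from the same algebraic group'' means.

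By Ganapathy's main theorem, the subcategory of $\Rep (G(F))$ consisting of representations generated by their vectors fixed under the $n$-th congruence subgroup is equivalent, as an abelian category, to the analogous subcategory for $\tilde G(\tilde F)$. This equivalence is compatible with parabolic induction from corresponding Levi subgroups and with the identification of unramified characters of $M$ with those of $\tilde M$ (both tori are determined by the reductive quotient of the integral model). Hence $\sigma$ corresponds to a supercuspidal $\tilde\sigma$ of $\tilde M$, the orbit $\mc O$ of unramified twists matches an orbit $\tilde{\mc O} \subset \Irr (\tilde M)$, and $\mf s$ corresponds to a Bernstein datum $\tilde{\mf s}$ for $\tilde G$, yielding the equivalence $\Rep (G)^{\mf s} \cong \Rep (\tilde G)^{\tilde{\mf s}}$.

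A progenerator $\Pi^{\mf s}$ has depth at most $d$, so it corresponds under the equivalence to a progenerator $\tilde\Pi^{\tilde{\mf s}}$ of $\Rep (\tilde G)^{\tilde{\mf s}}$ and $\End_G (\Pi^{\mf s}) \cong \End_{\tilde G}(\tilde\Pi^{\tilde{\mf s}})$. The construction of $\mc H (\mc O, G)$ from this endomorphism algebra in \cite{SolEnd} uses only its abstract algebra structure together with the Bernstein torus $\mc O$, so it transfers formally to give $\mc H (\mc O, G) \cong \mc H (\tilde{\mc O}, \tilde G)$. The $q$-parameters $q_\alpha, q_{\alpha *}$ are defined via reducibility points of $I_P^G (\sigma')$ for $\sigma' \in \mc O$; reducibility is preserved by the categorical equivalence, and the matching $\mc O \cong \tilde{\mc O}$ is compatible with unramified twists, so the parameters on both sides coincide.

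The main technical point is to verify that Ganapathy's equivalence is indeed compatible with parabolic induction from all Levi subgroups occurring in the Bernstein datum, and with the identification of the tori of unramified characters --- properties that should follow from \cite{Gan1,Gan2} once $n$ is chosen sufficiently large relative to $d$. Once these compatibilities are in hand, the remaining transfer of the Hecke algebra together with its root datum and $q$-parameters is essentially formal.
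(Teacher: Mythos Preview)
Your proposal is correct and follows essentially the same route as the paper: invoke the Kazhdan--Ganapathy equivalence for sufficiently close fields, transfer the supercuspidal $\sigma$ to $\tilde\sigma$, and read off the equality of $q$-parameters from the preservation of reducibility of parabolically induced representations along the orbit $\mc O$.

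One point worth sharpening: the compatibility of the close-fields equivalence with normalized parabolic induction is not stated in \cite{Gan1,Gan2} and does require a separate argument. The paper establishes it (Theorem~\ref{thm:4.3}) by expressing $I_P^G$ on the Hecke-algebra side via Bushnell's functor $\mc I_{P,m}$ from \cite{Bus}, and then checking that Ganapathy's isomorphism $\zeta_m^G$ intertwines the inclusions $j_P : \mc H^+(M,M_{x,m}) \hookrightarrow \mc H(G,G_{x,m})$ on the two sides (Lemma~\ref{lem:4.7}). Once that is in place, the paper matches the $q$-parameters not by transporting the endomorphism algebra abstractly, but by directly verifying $\zeta_m(h_\alpha^\vee) = h_{\tilde\alpha}^\vee$ and hence $X_\alpha = X_{\tilde\alpha}\circ\overline{\zeta_m^{M_\alpha}}$ (Proposition~\ref{prop:4.4}); combined with the reducibility criterion this pins down $q_\alpha = q_{\tilde\alpha}$ and $q_{\alpha*} = q_{\tilde\alpha*}$ exactly, rather than just up to a rescaling of the coordinate on $\mc O$. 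Your formulation via ``reducibility is preserved and the matching of $\mc O$ is compatible with unramified twists'' amounts to the same thing, but the explicit identification of $h_\alpha^\vee$ with $h_{\tilde\alpha}^\vee$ is what makes it rigorous.
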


For classical groups over $p$-adic fields the parameters of the Hecke algebras were determined in 
\cite{Hei1,Hei3}, in terms of M\oe glin's classification of discrete series representations \cite{Moe3}. 
With a generalization of this method and a closer analysis of the resulting parameters we prove:

\begin{thmintro}\label{thm:E}
\textup{(see Paragraph \ref{par:classical})} \\
Conjecture \ref{conj:1} holds for all pure inner forms of quasi-split classical groups, and for all 
groups isogenous with one of those. This includes all simple groups of type $A_n, B_n, C_n, D_n$, 
except those associated to Hermitian forms on vector spaces over quaternion algebras.
\end{thmintro}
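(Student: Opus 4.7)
The plan is to combine prior results for type $A$ with an extension of Heiermann's parameter computation from quasi-split classical groups to their pure inner forms, and then pass to isogenous groups via Theorem \ref{thm:C}. For simple groups of type $A_n$, the Hecke algebras of all Bernstein blocks of inner forms of $GL_n (F)$ have been determined by S\'echerre and Stevens \cite{Sec,SeSt4,SeSt6}. A direct inspection of their formulas confirms that the resulting $(\lambda, \lambda^*)$ lies in the admissible patterns of Table \ref{fig:1}, and Theorem \ref{thm:C} then propagates this to all simple groups of type $A$. Thus the type $A$ case is essentially already in the literature.

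The core work concerns types $B_n, C_n$ and $D_n$. I would first use Theorem \ref{thm:D} to replace any Bernstein block over a local function field by an isomorphic one over a $p$-adic field with the same Hecke algebra and the same $q$-parameters, so that it suffices to treat $p$-adic fields. For quasi-split symplectic, special orthogonal and unitary groups over a $p$-adic field, Heiermann \cite{Hei1} computed $q_\alpha$ and $q_{\alpha *}$ for every $X_\alpha \in \Sigma_{\mc O}^\vee$ in terms of M\oe glin's combinatorial classification of discrete series \cite{Moe3}. I would read off from these formulas that each $q_\alpha$ is an integer power of $q_F$, with the allowed half-integer exception for short roots in a type $B_n$ component, and verify case by case that the resulting labels appear in Table \ref{fig:1}; this settles Conjecture \ref{conj:1} for the quasi-split case.

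To extend this to pure inner forms of quasi-split classical groups, I would adapt Heiermann's analysis. The M\oe glin-style parametrization of discrete series and the relevant Jacquet-module computations have analogues on pure inner forms, and the reducibility points of $I_P^G (\sigma')$ depend only on the underlying Jordan-block data, so the same shape of formulas for $q_\alpha$ and $q_{\alpha *}$ should persist, modulo a controlled rescaling accounted for by the inner twist. Once the labels are computed in each case, matching them against Table \ref{fig:1} is a finite case-check. Finally, to reach every simple group of type $B_n, C_n$ or $D_n$ isogenous to a pure inner form of a quasi-split classical group, I invoke Theorem \ref{thm:C}, which reduces Conjecture \ref{conj:1} to the simply connected cover.

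The main obstacle is transporting the combinatorial machinery of M\oe glin and Heiermann from the quasi-split setting to pure inner forms, tracking the integer exponents precisely enough to match Table \ref{fig:1} rather than merely showing that the labels are rational. The restriction in the statement, namely the exclusion of classical groups attached to Hermitian forms on vector spaces over quaternion algebras, reflects exactly the class of simple groups which are not pure inner forms of a quasi-split classical group and for which no analogous combinatorial classification of discrete series is yet available, placing them outside the reach of this strategy.
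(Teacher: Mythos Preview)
Your overall strategy matches the paper's: handle type $A$ via S\'echerre--Stevens, reduce to characteristic zero via Theorem~\ref{thm:D}, invoke Heiermann for the classical groups, and pass to isogenous groups via Section~\ref{sec:red}. However, there is a genuine gap in your treatment of spin groups.

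Theorem~\ref{thm:C} reduces Conjecture~\ref{conj:1} for $G$ \emph{to} its simply connected cover $G_{\Sc}$, not the other way around. Heiermann's computations are carried out for $SO_n$, $Sp_n$ and $U_n$; of these only $Sp_n$ and $SU_n$ are simply connected. So after steps 1--2 of your plan you have the conjecture for $SO_n$ and its pure inner forms, but not for $\mr{Spin}_n$, and Theorem~\ref{thm:C} cannot bridge that direction. The paper resolves this by embedding $\mr{Spin}_n$ as the derived group of $\mr{GSpin}_n$, observing that Heiermann's methods and M\oe glin's classification go through for $\mr{GSpin}_n$ as well, and then applying the bidirectional Corollary~\ref{cor:2.7} (for subgroups containing the derived group) to descend from $\mr{GSpin}_n$ to $\mr{Spin}_n$. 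Your proposal needs an analogous device.

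Two smaller points. First, for pure inner forms of special orthogonal groups the adaptation you describe is less speculative than you suggest: Heiermann \cite{Hei2} already treats these, so one can quote rather than reprove. Second, the verification that the labels are integers (part (i) of the conjecture) is not merely careful bookkeeping: the paper needs a parity analysis of the integers $a$ and $a_-$ coming from M\oe glin's Jordan blocks (Proposition~\ref{prop:3.3} and Lemma~\ref{lem:3.10}), which hinges on comparing the orthogonal/symplectic or conjugate-orthogonal/conjugate-symplectic types of $\rho$ and $\rho_-$. This is the substantive content behind what you call ``tracking the integer exponents precisely,'' and it deserves to be spelled out.
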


Theorem \ref{thm:E} is useful to study Hecke algebras and the local Langlands 
correspondence for general spin groups \cite{AMS4}.
Among classical groups associated to Hermitian forms, Conjecture \ref{conj:1} only remains open
for the non-pure inner forms of quasi-split classical groups. Unfortunately, the current understanding 
of their representations does not suffice to carry out the strategies we applied to other groups.\\

Finally, we consider exceptional groups. For most Bernstein components we can reduce the computation
of the Hecke algebra parameters to groups of Lie type $A_n,B_n,C_n$ and $D_n$, but sometimes that
does not work. We establish partial results for all simple exceptional groups, most of which can be
summarized as follows:

\begin{thmintro}\label{thm:F}
\textup{(see Paragraphs \ref{par:G2}, \ref{par:F4} and \ref{par:E})} \\
Conjecture \ref{conj:1} holds for all simple $F$-groups of type $G_2, F_4, E_6, {}^2 E_6, E_6^{(3)}, 
{}^3 D_4$. 

If (for any reductive $p$-adic group $G$) $\Sigma_{\mc O}^\vee$ has an irreducible component 
$\Sigma_{\mc O,j}^\vee$ of type $F_4$, then Conjecture \ref{conj:1} holds for $\Sigma_{\mc O,j}^\vee$.
\end{thmintro}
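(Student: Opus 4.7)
For both assertions the plan is to invoke Theorem \ref{thm:C} to reduce Conjecture \ref{conj:1} to the absolutely simple, simply connected case, and then to partition the Bernstein spectrum by conjugacy classes of Levi subgroups $M$. Supercuspidal blocks carry an empty root system $\Sigma_{\mc O}^\vee$ and so require no checking; blocks in the principal series are covered by Theorem \ref{thm:B}. What remains in each case are the blocks induced from a proper non-toral Levi subgroup, on which the reducibility points of parabolic induction from the supercuspidal orbit have to be analyzed.

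\textbf{The four listed types.} For $G_2$, every proper non-toral Levi has derived subgroup of type $A_1$, so $\mc O$ is a family of unramified twists of a supercuspidal of $GL_2 (F)$, $\Sigma_{\mc O}^\vee$ has rank at most one, and a single reducibility point determines $q_\alpha$ in a way that can be matched against the $A_1$-row of Table \ref{fig:1}. For the triality form ${}^3 D_4$ (relative root system of type $G_2$) and $E_6^{(3)}$ (relative root system of type $F_4$), every proper Levi is, after passage to a splitting extension, of type $A$; so the reducibility analysis reduces to that for products of $GL_n$ over finite extensions, which is settled by the type-$A$ portion of Theorem \ref{thm:E}. For split $E_6$, every proper Levi is of type $A$ or $D$, each irreducible component of $\Sigma_{\mc O}^\vee$ is simply laced, and Table \ref{fig:1} then requires only that the unique label be a positive integer — an output already available from the classical cases.

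\textbf{The $F_4$-component claim.} Suppose $\Sigma_{\mc O,j}^\vee$ of some Bernstein block is of type $F_4$. Theorem \ref{thm:C} reduces the problem to the absolutely simple, simply connected case. Classical $G$ produce only classical components of $\Sigma_{\mc O}^\vee$ (by Heiermann's results used for Theorem \ref{thm:E}), which excludes them; rank considerations exclude $G_2$; and $F_4$ has no non-trivial diagram automorphism, so the possible source groups are $F_4$, $E_6$, $E_7$, $E_8$ together with the outer form of $E_6$. For each such pair $(G,M)$ one computes the $q$-parameters from the rank-one reducibility data on the walls of $\mc O$, using the existing reducibility literature for these specific groups, and matches them against one of the three $F_4$-rows of Table \ref{fig:1}.

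\textbf{Main obstacle.} The hardest step is identifying which of the three $F_4$-rows applies. Thanks to the scaling freedom for the $q$-base, what is really at stake is only the ratio $\lambda(\text{long})/\lambda(\text{short})$, which according to Table \ref{fig:1} must land in $\{1,2,1/2,4\}$; separating these requires explicit input such as a type-theoretic construction (in the depth-zero case) or a direct calculation of the Harish-Chandra $\mu$-function on the cuspidal support orbit. Once the ratio is fixed, Conjecture \ref{conj:1}.(ii) follows by matching with the corresponding unipotent Bernstein block, and (i) is automatic because every entry of Table \ref{fig:1} is integral.
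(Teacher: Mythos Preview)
Your overall plan (reduce to simply connected via Theorem~\ref{thm:C}, then case-split by Levi subgroups, handling principal series by Theorem~\ref{thm:B} and rank-$\leq 1$ blocks trivially) matches the paper's, but two of your claimed reductions are factually wrong and would make the argument fail.

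First, $E_6^{(3)}$ does \emph{not} have relative root system $F_4$; it has relative root system $G_2$ (it appears in Paragraph~\ref{par:G2}, not~\ref{par:F4}). Its proper Levis are not all of type $A$ after base change: the long-root Levi $M_\beta$ involves $SL_2(D)$ for a cubic division algebra $D$, and the paper computes $q_\beta$ by pushing through the isogenies $M_\beta \leftarrow SL_2(D)\times GL_1(F) \to GL_2(D)$ to invoke Theorem~\ref{thm:3.1} and \eqref{eq:3.26}, with a separate check that the torsion numbers are unaffected.

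Second, for split $E_6$ it is \emph{not} true that every component of $\Sigma_{\mc O}^\vee$ is simply laced. For $J=\{\alpha_1,\alpha_3\}$ one has $\Sigma_\red(A_{M_J})\cong B_3$; for $J=\{\alpha_1,\alpha_3,\alpha_4\}$ one gets $B_2$; for $J=\{\alpha_2,\alpha_3,\alpha_5,\alpha_6\}$ one gets $G_2$. The paper handles these by the key device you are missing: for each simple $\alpha\in\Sigma_\red(A_{M_J})$ one analyses the inclusion $M_J\to M_{J\cup\{\alpha\}}$ up to isogeny, and since $M_{J\cup\{\alpha\}}$ has semisimple rank $\leq 5$ it is (a product of) type-$A$ or classical pieces, so the parameters come from Theorem~\ref{thm:3.1}, Corollary~\ref{cor:3.11}, or Theorem~\ref{thm:3.2}. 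Your appeal to ``Table~\ref{fig:1} then requires only that the unique label be a positive integer'' never applies, because the relevant components are not simply laced.

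For the $F_4$-component claim your sketch is too vague to succeed. The paper's actual mechanism is again to reduce each $M\to M_\alpha$ to a classical or type-$A$ inclusion; a crucial simplification you omit is that for roots in an $F_4$ component one always has $q_{\alpha*}=1$ by \cite[Lemma 3.3]{SolEnd}, and then Proposition~\ref{prop:2.4} guarantees the isogenies do not alter the parameters. With this in hand the paper computes $q_\alpha=q_F$ for one length and $q_\beta\in\{q_F,q_F^2\}$ for the other, matching Table~\ref{fig:1} directly; no separate ``identification of which $F_4$-row applies'' via $\mu$-functions or type theory is needed.
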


Our results about $F_4$ are useful in combination with \cite[\S 6]{SolHecke}. There we related 
the irreducible representations of an affine Hecke algebra with arbitrary positive $q$-parameters to 
the irreducible representations of the analogous algebra that has all $q$-parameters equal to 1.
The problem was only that we could not handle certain label functions for type $F_4$ root systems.
Theorem \ref{thm:F} shows that the label functions which could be handled well in 
\cite[\S 6]{SolHecke} exhaust the label functions that can appear for type $F_4$ root systems among 
affine Hecke algebras coming from reductive $p$-adic groups.\\

\textbf{Acknowledgements.}\\
We are grateful to Anne-Marie Aubert, Geo Tam and Stefan Dawydiak for their comments on earlier 
versions, and in particular for pointing out some problems. 
We thank the referee for his or her report and comments, which helped to clarify some parts.

\renewcommand{\theequation}{\arabic{section}.\arabic{equation}}
\counterwithin*{equation}{section}

\section{Progenerators and endomorphism algebras for Bernstein blocks}
\label{sec:progen}

We fix some notations and recall relevant material from \cite{SolEnd}. 
Let $F$ be a non-archimedean local field with ring of integers $\mf o_F$. Pick a uniformizing 
element $\varpi_F \in \mf o_F$. We denote the cardinality of the residue field 
$k_F = \mf o_F / \varpi \mf o_F$ by $q_F$. Let $|\cdot|_F$ be the norm on $F$,
normalized so that $|\varpi_F |_F = q_F^{-1}$.

Let $\mc G$ be a connected reductive $F$-group and let $G = \mc G (F)$ be its group of
$F$-rational points. We briefly call $G$ a reductive $p$-adic group. We consider the
category $\Rep (G)$ of smooth $G$-representations on complex vector spaces. Let $\Irr (G)$
be the set of equivalence classes of irreducible objects in Rep$(G)$, and $\Irr_\cusp (G)
\subset \Irr (G)$ the subset of supercuspidal representations.

Let $\mc M$ be a $F$-Levi subgroup of $\mc G$ and write $M = \mc M (F)$. The group of 
un\-ra\-mi\-fied characters of $M$ is denoted $X_\nr (M)$. We fix $(\sigma,E) \in \Irr_\cusp (M)$.
The set of unramified twists of $\sigma$ is
\[
\mc O = \{ \sigma \otimes \chi : \chi \in X_\nr (M) \} \subset \Irr (M) .
\]
It can be identified with the inertial equivalence class $\mf s_M = [M,\sigma ]_M$. Let
$\mf s = [M,\sigma ]_G$ be the associated inertial equivalence class for $G$.

Recall that the supercuspidal support Sc$(\pi)$ of $\pi \in \Irr (G)$ consists of a Levi
subgroup of $G$ and an irreducible supercuspidal representation thereof. Although Sc$(\pi)$
is only defined up to $G$-conjugacy, we shall only be interested in supercuspidal supports
with Levi subgroup $M$, and then the supercuspidal representation is uniquely defined up
to the natural action of $N_G (M)$ on $\Irr (M)$.

This setup yields a Bernstein component
\[
\Irr (G)^{\mf s} = \{ \pi \in \Irr (G) : \mr{Sc}(\pi) \in (M,\mc O) \}
\]
of $\Irr (G)$. It generates a Bernstein block $\Rep (G)^{\mf s}$ of $\Rep (G)$, see \cite{BeDe}.

Let $M^1 \subset M$ be the group generated by all compact subgroups of $M$, so that 
$X_\nr (M) = \Irr (M / M^1)$. Then
\begin{equation}\label{eq:1.1}
\mr{ind}_{M^1}^M (\sigma,E) \cong E \otimes_\C \C [M / M^1] \cong E \otimes_\C \C [X_\nr (M)] , 
\end{equation}
where $\C [M / M^1]$ is the group algebra of the discrete group $M / M^1$ and $\C [X_\nr (M)]$ 
is the ring of regular functions on the complex torus $X_\nr (M)$. Supercuspidality implies 
that \eqref{eq:1.1} is a progenerator of $\Rep (M)^{\mf s_M}$. 
Let $P \subset G$ be a parabolic subgroup with Levi factor $M$, chosen as prescribed by
\cite[Lemma 9.1]{SolEnd}. Let 
\[
I_P^G : \Rep (M) \to \Rep (G)
\] 
be the parabolic induction functor, normalized so that it preserves unitarity. As a consequence of 
Bernstein's second adjointness theorem, $I_P^G$ preserves projectivity. The representation
\[
\Pi^{\mf s} := I_P^G (E \otimes \C [X_\nr (M)])
\]
is a progenerator of $\Rep (G)^{\mf s}$, see \cite[\S III.4.1]{BeRu} or 
\cite[Th\'eor\`eme VI.10.1]{Ren}. That means \cite[Theorem 1.8.2.1]{Roc2} that the functor
\[
\begin{array}{ccc}
\Rep (G)^{\mf s} & \longrightarrow & \End_G (\Pi^{\mf s}) -\Mod \\
V & \mapsto & \Hom_G (\Pi^{\mf s}, V) 
\end{array}
\]
is an equivalence of categories. This motivates the study of the endomorphism algebra 
$\End_G (\Pi^{\mf s})$, which was carried out in \cite{Roc2,Hei2,SolEnd}. To be precise,
our $\Pi^{\mf s}$ is a direct sum of finitely many copies of the progenerators
studied (in special cases) in \cite{Roc2,Hei2}. To describe the structure of
$\End_G (\Pi^{\mf s})$, we have to recall several objects which lead to the appropriate 
root datum. The set 
\[
X_\nr (M,\sigma) = \{ \chi \in X_\nr (M) : \sigma \otimes \chi \cong \chi \} 
\]
is a finite subgroup of $X_\nr (M)$. The map
\[
X_\nr (M) / X_\nr (M,\sigma) \to \mc O : \chi \mapsto \sigma \otimes \chi
\]
is a bijection, and in this way we provide $\mc O$ with the structure of a complex variety
(a torus, but without a canonical base point). The group
\[
M_\sigma^2 := \bigcap\nolimits_{\chi \in X_\nr (M,\sigma)} \ker \chi
\]
has finite index in $M$, and there are natural isomorphisms
\begin{align*}
& \Irr (M_\sigma^2 / M^1) \cong X_\nr (M) / X_\nr (M,\sigma) ,\\
& \C [M_\sigma^2 / M^1] \cong \C [ X_\nr (M) / X_\nr (M,\sigma) ] .
\end{align*}
Here and later on, the notation $\C [?]$ must be interpreted as in \eqref{eq:1.1}.
The group 
\[
W(G,M) := N_G (M) / M
\]
is a Weyl group in most cases (and if it is not, then it is still very close to a Weyl group). 
The natural action of $N_G (M)$ on $\Rep (M)$ induces an action of $W(G,M)$ on $\Irr (M)$. 
Let $N_G (M,\mc O)$ be the stabilizer of $\mc O$ in $N_G (M)$ and write
\[
W(M,\mc O) = N_G (M,\mc O) / M .
\]
Thus $W(M,\mc O)$ acts naturally on the complex algebraic variety $\mc O$. This finite group
figures prominently in the Bernstein theory, for instance because the centres of 
$\Rep (G)^{\mf s}$ and of $\End_G (\Pi^{\mf s})$ are naturally isomorphic with 
$\C [\mc O]^{W(M,\mc O)}$.

Let $\mc A_M$ be the maximal $F$-split torus in $Z(\mc M)$, put $A_M = \mc A_M (F)$ and let $
X_* (\mc A_M) = X_* (A_M)$ be the cocharacter lattice. We write 
\[
\mf a_M = X_* (A_M) \otimes_\Z \R \quad \text{and} \quad \mf a_M^* = X^* (A_M) \otimes_\Z \R .
\]
Let $\Sigma (G,A_M) \subset X^* (A_M)$ be the set of nonzero weights occurring 
in the adjoint representation of $A_M$ on the Lie algebra of $G$, and let $\Sigma_\red (A_M)$
be the set of indivisible elements therein.

For every $\alpha \in \Sigma_\red (A_M)$ there is a unique Levi subgroup $M_\alpha$ of $G$ which 
contains $M$ and the root subgroups $U_\alpha, U_{-\alpha}$, and whose semisimple rank is one 
higher than that of $M$. Let $\alpha^\vee \in \mf a_M$ be the unique element which is orthogonal 
to $X^* (A_{M_\alpha})$ and satisfies $\langle \alpha^\vee, \alpha \rangle = 2$.

Recall the Harish-Chandra $\mu$-functions from \cite[\S 1]{Sil2} and \cite[\S V.2]{Wal}. 
The restriction of $\mu^G$ to $\mc O$ is a rational, $W(M,\mc O)$-invariant function on $\mc O$ 
\cite[Lemma V.2.1]{Wal}. It determines a reduced root system \cite[Proposition 1.3]{Hei2}
\begin{equation}\label{eq:1.5}
\Sigma_{\mc O,\mu} = \{ \alpha \in \Sigma_\red (A_M) : \mu^{M_\alpha} \text{ has a zero on } \mc O \}.
\end{equation}
For $\alpha \in \Sigma_\red (A_M)$ the function $\chi \mapsto \mu^{M_\alpha}(\sigma \otimes \chi)$ 
factors through the quotient map $X_\nr (M) \to X_\nr (T_\alpha)$, where $T_\alpha$ is the 
onedimensional subtorus of $A_M$ with Lie algebra spanned by (a multiple of) $\alpha^\vee$. 
The associated system of coroots is
\[
\Sigma_{\mc O,\mu}^\vee = \{ \alpha^\vee \in \mf a_M :  \mu^{M_\alpha} \text{ has a zero on } \mc O \}.
\]
By the aforementioned $W(M,\mc O)$-invariance of $\mu^G$, $W(M,\mc O)$ acts naturally on
$\Sigma_{\mc O,\mu}$ and on $\Sigma_{\mc O,\mu}^\vee$.
Let $s_\alpha$ be the unique nontrivial element of $W(M_\alpha,M)$. By \cite[Proposition 1.3]{Hei2}
the Weyl group $W(\Sigma_{\mc O,\mu})$ can be identified with the subgroup of $W(G,M)$ generated
by the reflections $s_\alpha$ with $\alpha \in \Sigma_{\mc O,\mu}$, and as such it is a normal
subgroup of $W(M,\mc O)$.

The parabolic subgroup $P = MU$ of $G$ determines a set of positive roots $\Sigma_{\mc O,\mu}^+$ 
and a basis $\Delta_{\mc O,\mu}$ of $\Sigma_{\mc O,\mu}$. Let $\ell_{\mc O}$ be the length function on
$W(\Sigma_{\mc O,\mu})$ specified by $\Delta_{\mc O,\mu}$. Since $W(M,\mc O)$ acts on 
$\Sigma_{\mc O,\mu}$, $\ell_{\mc O}$ extends naturally to $W(M,\mc O)$, by
\[
\ell_{\mc O}(w) = | w (\Sigma_{\mc O,\mu}^+) \cap -\Sigma_{\mc O,\mu}^+ |. 
\]
The set of positive roots also determines a subgroup of $W(M,\mc O)$:
\begin{equation}\label{eq:3.19}
\begin{aligned}
R(\mc O) & = \{ w \in W(M,\mc O) : w (\Sigma_{\mc O,\mu}^+) = \Sigma_{\mc O,\mu}^+ \} \\
& = \{ w \in W(M,\mc O) : \ell_{\mc O}(w) = 0 \} .
\end{aligned}
\end{equation}
The simple transitivity of the action of $W(\Sigma_{\mc O,\mu})$ on the set of positive systems of
$\Sigma_{\mc O,\mu}$ \cite[Theorem 1.8]{Hum} implies that
\begin{equation}\label{eq:3.8}
W(M,\mc O) = R (\mc O) \ltimes W(\Sigma_{\mc O,\mu})  . 
\end{equation}
Recall that $X_\nr (M) / X_\nr (M,\sigma)$ is isomorphic to the character group of the lattice 
$M_\sigma^2 / M^1$. Since $M_\sigma^2$ depends only on $\mc O$, it is normalized by $N_G (M,\mc O)$. 
In particular the conjugation action of $N_G (M,\mc O)$ on $M_\sigma^2 / M^1$ induces an action of 
$W(M,\mc O)$ on $M_\sigma^2 / M^1$.

Let $h^\vee_\alpha$ be the unique 
generator of $(M_\sigma^2 \cap M_\alpha^1) / M^1 \cong \Z$ such that $|\alpha (h^\vee_\alpha)|_F > 1$. 
Recall the injective homomorphism $H_M : M/M^1 \to \mf a_M$ defined by 
\[
q_F^{\langle H_M (m), \gamma \rangle} = |\gamma (m) |_F \qquad \text{for } m \in M, \gamma \in X^* (M).
\]
\textbf{Remark.} \emph{This definition is motivated by the correction to \cite{SolEnd}. 
In earlier versions we used an alternative convention, which differs from the above by multiplying 
$h_\alpha^\vee$ and $H_M$ by a factor -1. That does not change the Hecke algebras, it only amounts 
to a different choice of generators.}\\

In these terms $H_M (h^\vee_\alpha) \in \R_{>0} \alpha^\vee$. Since $M_\sigma^2$ has finite index in
$M$, $H_M (M_\sigma^2 / M^1)$ is a lattice of full rank in $\mf a_M$. We write 
\[
(M_\sigma^2 / M^1)^\vee = \Hom_\Z (M_\sigma^2 / M^1, \Z) .
\]
Composition with $H_M$ and $\R$-linear extension of maps $H_M (M_\sigma^2 / M^1) \to \Z$ determines 
an embedding 
\[
H_M^\vee : (M_\sigma^2 / M^1)^\vee \to \mf a_M^*.
\]
Then $H_M^\vee (M_\sigma^2 / M^1)^\vee$ is a lattice of full rank in $\mf a_M^*$.

\begin{prop}\label{prop:3.5} \textup{\cite[Proposition 3.5]{SolEnd}}\\
Let $\alpha \in \Sigma_{\mc O,\mu}$.
\enuma{
\item For $w \in W(M,\mc O)$: $w(h^\vee_\alpha) = h^\vee_{w(\alpha)}$.
\item There exists a unique $\alpha^\sharp \in (M_\sigma^2 / M^1)^\vee$ such that 
$H_M^\vee (\alpha^\sharp) \in \R \alpha$ and $\langle h^\vee_\alpha, \alpha^\sharp \rangle = 2$.
\item Write 
\[
\begin{array}{lll}
\Sigma_{\mc O} & = & \{ \alpha^\sharp : \alpha \in \Sigma_{\mc O,\mu} \} , \\
\Sigma^\vee_{\mc O} & = & \{ h^\vee_\alpha : \alpha \in \Sigma_{\mc O,\mu} \} .
\end{array}
\]
Then $(\Sigma_{\mc O}^\vee, M_\sigma^2 / M^1, \Sigma_{\mc O}, (M_\sigma^2 / M^1)^\vee)$
is a root datum with Weyl group $W(\Sigma_{\mc O,\mu})$.
\item The group $W(M,\mc O)$ acts naturally on this root datum, and $R(\mc O)$ is the
stabilizer of the basis $\Delta_{\mc O}^\vee$ determined by $P$.
}
\end{prop}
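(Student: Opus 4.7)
The plan is to verify the four claims in sequence, exploiting the fact that $M_\sigma^2/M^1$ embeds via $H_M$ as a full-rank sublattice of $\mf a_M$, and that the $W(M,\mc O)$-action on it is induced by conjugation from $N_G(M,\mc O)$. For (a), I would use that $W(M,\mc O) = N_G(M,\mc O)/M$ acts on $M$ by conjugation, and that $M_\sigma^2$ (depending only on $\mc O$) is normalized by $N_G(M,\mc O)$. Conjugation by a lift of $w$ sends $M_\alpha$ to $M_{w(\alpha)}$, hence it carries $(M_\sigma^2 \cap M_\alpha^1)/M^1$ bijectively onto $(M_\sigma^2 \cap M_{w(\alpha)}^1)/M^1$. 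Since $\nu_F$ is $G$-invariant and the characters satisfy $\alpha \circ \mathrm{Ad}(n^{-1}) = w(\alpha)$ on $A_M$, one has $\nu_F(w(\alpha)(w(h^\vee_\alpha))) = \nu_F(\alpha(h^\vee_\alpha)) > 0$, so $w(h^\vee_\alpha)$ is precisely the distinguished generator $h^\vee_{w(\alpha)}$.

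For (b), I would note that $H_M^\vee((M_\sigma^2/M^1)^\vee)$ is a full-rank lattice in $\mf a_M^*$, so its intersection with the line $\R \alpha$ is a rank-one sublattice. Pick a generator $\alpha^\sharp$; since $H_M(h^\vee_\alpha) \in \R_{>0}\alpha^\vee$ and $\langle \alpha^\vee, \alpha \rangle = 2$, the pairing $\langle h^\vee_\alpha,\alpha^\sharp\rangle$ is a nonzero integer, whose sign can be fixed. The delicate point, which I expect to be the main obstacle, is confirming that this integer equals exactly $2$: one has to match the valuation-based primitivity used to define $h^\vee_\alpha$ as the generator of $(M_\sigma^2 \cap M_\alpha^1)/M^1$ with the dual primitivity of $\alpha^\sharp$ in $(M_\sigma^2/M^1)^\vee \cap \R\alpha$. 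This is an arithmetic input coming from the rank-one Levi $M_\alpha$ (and not from formal lattice theory), and it is where the identification of zeros of $\mu^{M_\alpha}$ on $\mc O$ with the integrality on $M_\sigma^2/M^1$ is used.

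For (c), once (a) and (b) are in hand, the root datum axioms follow routinely. The set $\Sigma_{\mc O}$ is finite, it spans the $\R$-linear span of its pullback under $H_M^\vee$, and the same holds dually for $\Sigma_{\mc O}^\vee$. The reflection $s_{\alpha^\sharp}$ on $M_\sigma^2/M^1$ is the restriction of $s_\alpha \in W(\Sigma_{\mc O,\mu}) \subset W(M,\mc O)$ coming from $W(M_\alpha,M)$, and by (a) combined with the uniqueness in (b) one gets $s_\alpha(h^\vee_\beta) = h^\vee_{s_\alpha(\beta)}$ and $s_\alpha(\beta^\sharp) = s_\alpha(\beta)^\sharp$ for all $\beta \in \Sigma_{\mc O,\mu}$. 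These are precisely the closure and reflection axioms, and they identify the Weyl group of the root datum with $W(\Sigma_{\mc O,\mu})$.

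For (d), the action of $W(M,\mc O)$ on $M_\sigma^2/M^1$ and its dual preserves $\Sigma_{\mc O,\mu}$ and $\Sigma_{\mc O,\mu}^\vee$ by construction, and hence by (a)--(c) it preserves $\Sigma_{\mc O}$ and $\Sigma_{\mc O}^\vee$. The basis $\Delta_{\mc O}^\vee = \{ h^\vee_\alpha : \alpha \in \Delta_{\mc O,\mu} \}$ is determined by $P$ through the positive system $\Sigma_{\mc O,\mu}^+$; by (a), its stabilizer in $W(M,\mc O)$ is exactly the stabilizer of $\Sigma_{\mc O,\mu}^+$, which is $R(\mc O)$ by \eqref{eq:3.19}. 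This completes the outline; as noted, the only step not reducible to formal root-datum manipulation is the integrality/normalization in (b), which is where the rank-one reduction to $M_\alpha$ genuinely enters.
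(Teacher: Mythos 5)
The paper offers no proof of this statement: it is quoted verbatim from \cite[Proposition 3.5]{SolEnd}, so the only comparison available is with the argument there. Your parts (a), (c) and (d) are essentially the standard argument and are fine: (a) is conjugation by a lift of $w$ together with the facts that $M_\sigma^2$ and $M^1$ are normalized by $N_G(M,\mc O)$ and that the sign condition $\nu_F(\alpha(h_\alpha^\vee))>0$ is preserved; (c) and (d) then follow formally once (b) is known.

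The problem is that you have explicitly left open the one step that carries all the content, namely the \emph{existence} of $\alpha^\sharp$ in part (b). Uniqueness is trivial (the pairing with $h_\alpha^\vee$ is injective on the line $\R\alpha$), but existence amounts to showing that the functional $2\alpha/\langle H_M(h_\alpha^\vee),\alpha\rangle$ is integral on the whole lattice $M_\sigma^2/M^1$, equivalently that $2$ lies in the image of $\langle h_\alpha^\vee,\,\cdot\,\rangle$ restricted to the rank-one lattice $H_M^\vee\big((M_\sigma^2/M^1)^\vee\big)\cap\R\alpha$. You call this ``the main obstacle'' and attribute it vaguely to ``an arithmetic input coming from the rank-one Levi $M_\alpha$,'' but you do not produce the argument, and your framing (hoping that a \emph{generator} of that rank-one lattice pairs to exactly $2$) is not even the right target. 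The mechanism that actually closes the gap is the hypothesis $\alpha\in\Sigma_{\mc O,\mu}$: by \cite[Proposition 1.3]{Hei2} the reflection $s_\alpha$ lies in $W(M,\mc O)$ and hence acts on the lattice $M_\sigma^2/M^1$, acting on $\mf a_M$ as the reflection in $\alpha$. Consequently, for every $x\in M_\sigma^2/M^1$ the element $x - s_\alpha(x)$ lies in $M_\sigma^2/M^1$ and has $H_M$-image in $\R\alpha^\vee$, so it belongs to $(M_\sigma^2\cap M_\alpha^1)/M^1=\Z h_\alpha^\vee$. Writing $x-s_\alpha(x)=\langle x,\alpha^\sharp\rangle\,h_\alpha^\vee$ defines the required element $\alpha^\sharp\in(M_\sigma^2/M^1)^\vee$, and $\langle h_\alpha^\vee,\alpha^\sharp\rangle=2$ because $s_\alpha(h_\alpha^\vee)=-h_\alpha^\vee$. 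Note that this construction simultaneously delivers the lattice-preservation axiom for the reflections that you invoke in part (c); without it, (c) is also incomplete. As it stands, your proposal is an outline with the decisive step missing.
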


We note that $\Sigma_{\mc O}$ and $\Sigma_{\mc O}^\vee$ have almost the same type as
$\Sigma_{\mc O,\mu}$. Indeed, the roots $H_M^\vee (\alpha^\sharp)$ are scalar multiples
of the $\alpha \in \Sigma_{\mc O,\mu}$, so the angles between the elements of $\Sigma_{\mc O}$
are the same as the angles between the corresponding elements of $\Sigma_{\mc O,\mu}$. 
It follows that every irreducible component of $\Sigma_{\mc O,\mu}$ has the same type
as the corresponding components of $\Sigma_{\mc O}$ and $\Sigma_{\mc O}^\vee$, except
that type $B_n / C_n$ might be replaced by type $C_n / B_n$.

For $\alpha \in \Sigma_\red (M) \setminus \Sigma_{\mc O,\mu}$, the function $
\mu^{M_\alpha}$ is constant on $\mc O$. In contrast, for $\alpha \in \Sigma_{\mc O,\mu}$ 
it has both zeros and poles on $\mc O$. By \cite[\S 5.4.2]{Sil2} 
\begin{equation}\label{eq:3.4}
s_\alpha \cdot \sigma' \cong \sigma' \quad \text{whenever } \mu^{M_\alpha}(\sigma') = 0 .
\end{equation}
As $\Delta_{\mc O,\mu}$ is linearly independent in $X^* (A_M)$ and $\mu^{M_\alpha}$ factors
through $A_M / A_{M_\alpha}$, there exists a $\tilde \sigma \in \mc O$ such that 
$\mu^{M_\alpha}(\tilde \sigma) = 0$ for all $\alpha \in \Delta_{\mc O,\mu}$. In view of 
\cite[\S 1]{Sil3} this can even be achieved with a unitary $\tilde \sigma$. 
We replace $\sigma$ by $\tilde \sigma$, which means that from now on we adhere to:

\begin{cond}\label{cond:3.1}
$(\sigma,E) \in \Irr (M)$ is unitary supercuspidal and $\mu^{M_\alpha}(\sigma) = 0$ 
for all $\alpha \in \Delta_{\mc O,\mu}$. We identify $X_\nr (M) / X_\nr (M,\sigma)$ with 
$\mc O$ via $\chi \mapsto \sigma \otimes \chi$.
\end{cond}

By \eqref{eq:3.4} the entire Weyl group $W(\Sigma_{\mc O,\mu})$ stabilizes the isomorphism 
class of this $\sigma$. However, in general $R(\mc O)$ need not stabilize $\sigma$. We define
\begin{equation}\label{eq:3.20}
X_\alpha \in \C [X_\nr (M) / X_\nr (M,\sigma)] \cong \C [\mc O] \quad \text{by} \quad 
X_\alpha (\chi) = X_\alpha (\sigma \otimes \chi)  = \chi (h_\alpha^\vee) .
\end{equation}
For any $w \in W(M,\mc O)$ which stabilizes $\sigma$ in $\Irr (M)$, Proposition 
\ref{prop:3.5}.a implies
\begin{equation}\label{eq:3.21}
w (X_\alpha) = X_{w (\alpha)} \quad \text{for all } \alpha \in \Sigma_{\mc O,\mu} . 
\end{equation}
According to \cite[\S 1]{Sil2} there exist $q_\alpha, q_{\alpha*} \in \R_{\geq 1}$,
$c'_{s_\alpha} \in \R_{>0}$ for $\alpha \in \Sigma_{\mc O,\mu}$, such that
\begin{equation}\label{eq:3.22}
\mu^{M_\alpha}(\sigma \otimes \cdot) = 
\frac{c'_{s_\alpha}  (1 - X_\alpha) (1 - X_\alpha^{-1})}{(1 - q_\alpha^{-1} X_\alpha)
(1 - q_\alpha^{-1} X_\alpha^{-1})} \frac{(1 + X_\alpha) (1 + X_\alpha^{-1})
}{(1 + q_{\alpha*}^{-1} X_\alpha)(1 + q_{\alpha*}^{-1} X_\alpha^{-1})} 
\end{equation}
as rational functions on $X_\nr (M) / X_\nr (M,\sigma) \cong \mc O$.
We may modify the choice of $\sigma$ in Condition \ref{cond:3.1}, so that, as in 
\cite[Remark 1.7]{Hei2}:
\begin{equation}\label{eq:3.25}
q_\alpha \geq q_{\alpha*} \text{ for all } \alpha \in \Delta_{\mc O,\mu} .
\end{equation}
Then \cite[Lemma 3.4]{SolEnd} guarantees that the maps $\Sigma_{\mc O,\mu} \to \R_{\geq 0}$
given by $q_\alpha$ and $q_{\alpha *}$ are $W(M,\mc O)$-invariant.
Comparing \eqref{eq:3.22}, Condition \ref{cond:3.1} and \eqref{eq:3.25}, we see that 
$q_{\alpha} > 1$ for all $\alpha \in \Sigma_{\mc O,\mu}$.
In particular the zeros of $\mu^{M_\alpha}$ occur at
\[
\{ X_\alpha = 1 \} = \{ \sigma' \in \mc O : X_\alpha (\sigma') = 1 \} 
\]
and sometimes at
\[
\{ X_\alpha = -1 \} = \{ \sigma' \in \mc O : X_\alpha (\sigma') = -1 \} . 
\]
When $\mu^{M_\alpha}$ has a zero at both $\{ X_\alpha = 1 \}$ and $\{ X_\alpha = -1 \}$, the 
irreducible component of $\Sigma_{\mc O}^\vee$ containing $h^\vee_\alpha$ has type
$B_n \; (n \geq 1)$ and $h^\vee_\alpha$ is a short root \cite[Lemma 3.3]{SolEnd}.

For another characterization of $\mu_\alpha$, we write down an explicit 
construction. Let $\delta_P : P \to \R_{>0}$ be the modular function. 
We realize $I_P^G (\sigma \otimes \chi,E)$ on the vector space
\[
\big\{ f : G \to E \mid f \text{ is smooth}, f(umg) = \sigma (m) (\chi \delta_P^{1/2})(m) f(g) \;
\forall u \in U, m \in M, g \in G \big\} ,
\]
with $G$ acting by right translations. Let $P' = M U'$ be another parabolic subgroup of $G$ with 
Levi factor $M$. Following \cite[\S IV.1]{Wal} we consider the map
\begin{equation}\label{eq:1.2}
\begin{array}{cccc}
J_{P'|P}(\sigma \otimes \chi) : & I_P^G (\sigma \otimes \chi,E) & \to & I_{P'}^G (\sigma \otimes \chi,E) \\
& f & \mapsto & [g \mapsto \int_{(U \cap U') \backslash U'} f (u' g) \textup{d} u']  .
\end{array}
\end{equation}
Here d$u'$ denotes a quotient of Haar measures on $U'$ and $U \cap U'$. This integral converges for
$\chi$ in an open subset of $X_\nr (M)$ (independent of $f$). As such it defines a map
\[
\begin{array}{ccc}
X_\nr (M) \times I_P^G (E) & \to & I_{P'}^G (E) , \\
(\chi, f) & \mapsto & J_{P'|P}(\sigma \otimes \chi) f ,
\end{array}
\]
which is rational in $\chi$ and linear in $f$ \cite[Th\'eor\`eme IV.1.1]{Wal}. Moreover it intertwines
the $G$-representation $I_P^G (\sigma \otimes \chi)$ with $I_{P'}^G (\sigma \otimes \chi)$ whenever 
it converges. Then
\[
J_{P|P'}(\sigma \otimes \chi) J_{P'|P}(\sigma \otimes \chi) \in \End_G (I_P^G (\sigma \otimes \chi,E))
= \C \, \mr{id} ,
\]
at least for $\chi$ in a Zariski-open subset of $X_\nr (M)$. For any $\alpha \in \Sigma_\red (M)$ there
exists by construction \cite[\S IV.3]{Wal} a nonzero constant such that
\begin{equation}\label{eq:1.3}
J_{M_\alpha \cap P | s_\alpha (M_\alpha \cap P)}(\sigma \otimes \chi) J_{s_\alpha (M_\alpha \cap P) | 
M_\alpha \cap P}(\sigma \otimes \chi) = \frac{\mr{constant}}{\mu^{M_\alpha}(\sigma \otimes \chi)} ,
\end{equation}
as rational functions of $\chi \in X_\nr (M)$. We note that 
\[
(U \cap s_\alpha (U)) \backslash s_\alpha (U) = U_{-\alpha} \quad \text{and} \quad
(U \cap s_\alpha (U)) \backslash U = U_\alpha ,
\]
where $U_{\pm \alpha}$ denotes a root subgroup with respect to $A_M$. That allows us to simplify 
\eqref{eq:1.3} to
\begin{equation}\label{eq:1.4}
\begin{array}{lll}
J_{s_\alpha (M_\alpha \cap P) | M_\alpha \cap P}(\sigma \otimes \chi) f & = &
[g \mapsto \int_{U_{-\alpha}} f (u_- g) \textup{d} u_- ], \\
J_{M_\alpha \cap P | s_\alpha (M_\alpha \cap P)}(\sigma \otimes \chi) f & = &
[g \mapsto \int_{U_{\alpha}} f (u_+ g) \textup{d} u_+ ], 
\end{array}
\end{equation}
where d$u_\pm$ is a Haar measure on $U_{\pm \alpha}$. The numbers $q_\alpha, q_\alpha^{-1}$ (and
$q_{\alpha*}, q_{\alpha *}^{-1}$ when $q_{\alpha*} \neq 1$) are precisely the values of
$X_\alpha (\chi) = X_\alpha (\sigma \otimes \chi)$ at which $\mu^{M_\alpha}(\sigma \otimes \chi)$ has 
a pole, and in view of \eqref{eq:1.3} these are also given by the $\chi$ for which
\[
J_{M_\alpha \cap P | s_\alpha (M_\alpha \cap P)}(\sigma \otimes \chi) J_{s_\alpha (M_\alpha \cap P) | 
M_\alpha \cap P}(\sigma \otimes \chi) = 0 .
\]
For other non-unitary $\sigma \otimes \chi \in \mc O$ the operators \eqref{eq:1.4} are invertible,
and by the Langlands classfication \cite[Th\'eor\`eme VII.4.2]{Ren} 
$I_{P \cap M_\alpha}^{M_\alpha} (\sigma \otimes \chi)$ is irreducible.

\begin{cor}\label{cor:1.1}
The poles of $\mu^{M_\alpha}$ are precisely the non-unitary $\sigma \otimes \chi \in \mc O$  
for which $I_{P \cap M_\alpha}^{M_\alpha} (\sigma \otimes \chi)$ is reducible.
\end{cor}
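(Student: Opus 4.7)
The corollary consolidates the observations already made in the two paragraphs preceding it, so the plan is to isolate each implication and verify that what has been said really supplies both directions.

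For "pole $\Rightarrow$ reducible", fix $\sigma' = \sigma \otimes \chi \in \mc O$ at which $\mu^{M_\alpha}$ has a pole. By \eqref{eq:1.3} the composition
\[
J_{M_\alpha \cap P \mid s_\alpha(M_\alpha \cap P)}(\sigma') \circ J_{s_\alpha(M_\alpha \cap P) \mid M_\alpha \cap P}(\sigma')
\]
equals a nonzero constant divided by $\mu^{M_\alpha}(\sigma')$, and hence vanishes at $\sigma'$. To extract reducibility I would renormalize: multiply each $J$-operator by a suitable rational function in $\chi$ so that both become holomorphic and nonzero at $\sigma'$. The orders of the individual poles and zeros of the $J_{P'\mid P}$ are controlled by the integral formulas \eqref{eq:1.4} together with standard Plancherel-theoretic identities from \cite[\S IV--V]{Wal}. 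After this renormalization the two operators become nonzero $M_\alpha$-equivariant maps between $I_{P \cap M_\alpha}^{M_\alpha}(\sigma')$ and $I_{s_\alpha(P) \cap M_\alpha}^{M_\alpha}(\sigma')$ whose composition is still zero. Consequently the first must have a proper nonzero kernel (equivalently, the second a proper nonzero image), and this kernel is a proper nonzero $M_\alpha$-subrepresentation of $I_{P \cap M_\alpha}^{M_\alpha}(\sigma')$.

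For the converse, the argument is already essentially given in the paragraph immediately preceding the corollary: if $\sigma'$ is non-unitary and $\mu^{M_\alpha}$ has no pole at $\sigma'$, then by \eqref{eq:1.3} the composition of the two intertwining operators is a nonzero scalar multiple of the identity, so each $J$-operator is invertible, and then the Langlands classification \cite[Th\'eor\`eme VII.4.2]{Ren} forces $I_{P \cap M_\alpha}^{M_\alpha}(\sigma')$ to be irreducible. Contrapositively, a reducibility point in the non-unitary locus must be a pole of $\mu^{M_\alpha}$.

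The only genuine subtlety is the normalization step in the forward direction: one has to confirm that the vanishing of the composition really reflects vanishing of the renormalized operators and is not absorbed into a spurious cancellation with an intrinsic zero of one of the $J$-operators. This is handled by using \eqref{eq:1.4} and \eqref{eq:1.3} to pinpoint the orders of pole and zero of each intertwiner along the hyperplane $\{X_\alpha = X_\alpha(\sigma')\}$ and then normalizing by the appropriate power of a linear function vanishing on that hyperplane; after this, the two intertwiners are genuinely nonzero at $\sigma'$ while their composition still vanishes, which yields the required subrepresentation.
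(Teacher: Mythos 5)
Your proposal takes the same route as the paper, which states the corollary without a separate proof precisely because the two preceding paragraphs already contain the argument: \eqref{eq:1.3} converts poles of $\mu^{M_\alpha}$ into vanishing of the composed intertwining operators, and the Langlands classification \cite[Th\'eor\`eme VII.4.2]{Ren} handles the non-unitary points that are not poles. Both of your directions are correct in substance. One inference in your forward direction does not hold as literally stated: from $B\circ A=0$ with $A,B$ both nonzero one only concludes that $\mathrm{im}(A)$ lies in the proper subspace $\ker(B)$, not that $A$ has nonzero kernel; an injective $A$ whose image is annihilated by $B$ is not excluded by pure linear algebra, so your parenthetical ``equivalently'' is not an equivalence. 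The conclusion is nevertheless salvaged by a standard one-line supplement: $I_{P\cap M_\alpha}^{M_\alpha}(\sigma')$ and $I_{s_\alpha(P)\cap M_\alpha}^{M_\alpha}(\sigma')$ are inductions from associate parabolics of the same supercuspidal, hence have the same finite length, so a proper embedding of the first into the second is impossible and $\ker(A)\neq 0$ after all; alternatively one can invoke the Langlands quotient characterization directly, since it identifies irreducibility of the standard module with bijectivity of the intertwining operator. With that patch, and granting the (legitimate, and correctly flagged) bookkeeping of pole orders in your renormalization step, the argument is the paper's.
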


We endow the based root datum
\[
\big( \Sigma_{\mc O}^\vee, M_\sigma^2 / M^1, \Sigma_{\mc O}, 
(M_\sigma^2 / M^1)^\vee, \Delta_{\mc O}^\vee \big)
\]
with the parameter $q_F$ and the labels 
\[
\lambda (\alpha) = \log (q_\alpha q_{\alpha *}) / \log (q_F) ,\quad
\lambda^* (\alpha) = \log (q_\alpha q_{\alpha *}^{-1}) / \log (q_F) .
\]
To avoid ambiguous terminology, we will call the $q_\alpha$ and $q_{\alpha*}$ $q$-parameters
and refer to $q_F$ as the $q$-base. Replacing the $q$-base by another real number
$>1$ hardly changes the representation theory of Hecke algebras.

To these data we associate the affine Hecke algebra 
\[
\mc H (\mc O, G) = \mc H \big( \Sigma_{\mc O}^\vee, M_\sigma^2 / M^1, \Sigma_{\mc O}, 
(M_\sigma^2 / M^1)^\vee, \lambda, \lambda^*, q_F \big) .
\]
By definition it is the vector space 
\[
\C [M_\sigma^2 / M^1] \otimes_\C \C [W(\Sigma_{\mc O,\mu})]
\]
with multiplication given by the following rules:
\begin{itemize}
\item $\C [M_\sigma^2 / M^1] \cong \C [\mc O]$ is embedded as subalgebra,
\item $\C [W(\Sigma_{\mc O,\mu})] = \mr{span} \{ T_w : w \in W(\Sigma_{\mc O,\mu}) \}$ is embedded
as the Iwahori--Hecke algebra $\mc H ( W(\Sigma_{\mc O,\mu}), q_F^\lambda )$, that is,
\begin{align*}
& T_w T_v = T_{wv} \quad \text{if } \ell_{\mc O}(w) + \ell_{\mc O}(v) = \ell_{\mc O}(wv),  \\
& (T_{s_\alpha} + 1)(T_{s_\alpha} - q_F^{\lambda (\alpha)}) = (T_{s_\alpha} + 1)(T_{s_\alpha} - 
q_\alpha q_{\alpha *}) = 0 \quad \text{if } \alpha \in \Delta_{\mc O,\mu} ,
\end{align*}
\item for $\alpha \in \Delta_{\mc O,\mu}$ and $m \in M_\sigma^2 / M^1$ (corresponding to
$X_m \in \C [M_\sigma^2 / M^1]$):
\[
X_m T_{s_\alpha} - T_{s_\alpha} X_{s_\alpha (m)} =
\big( q_\alpha q_{\alpha *} - 1 + X_\alpha^{-1} (q_\alpha - q_{\alpha *}) \big) 
\frac{X_m - X_{s_\alpha (m)}}{1 - X_\alpha^{-2}} .
\]
\end{itemize}
This affine Hecke algebra is related to $\End_G (\Pi^{\mf s})$ in the following way.
Let $\End_G^\circ (\Pi^{\mf s})$ be the subalgebra of $\End_G (\Pi^{\mf s})$ built, as in 
\cite[\S 5.2]{SolEnd}, using only $\C [X_\nr (M)]$,\\ 
$X_\nr (M,\sigma)$ and $W(\Sigma_{\mc O,\mu})$ -- so omitting $R(\mc O)$. 
By \cite[Corollary 5.8]{SolEnd} there exist elements
$\mc T_r \in \End_G (\Pi^{\mf s})^\times$ for $r \in R(\mc O)$, such that
\begin{equation}\label{eq:1.6}
\End_G (\Pi^{\mf s}) = \bigoplus\nolimits_{r \in R (\mc O)} \End_G^\circ (\Pi^{\mf s}) \mc T_r .
\end{equation}
The calculations in \cite[\S 6--8]{SolEnd} apply also to $\End_G^\circ (\Pi^{\mf s})$ and they
imply, as in \cite[Corollary 9.4]{SolEnd}, an equivalence of categories 
\begin{equation}\label{eq:1.7}
\End_G^\circ (\Pi^{\mf s}) -\fMod \longleftrightarrow \mc H (\mc O,G) -\fMod .
\end{equation}
Here $-\fMod$ denotes the category of finite length right modules. To go from 
$\End_G^\circ (\Pi^{\mf s}) -\fMod$ to $\End_G (\Pi^{\mf s}) -\fMod$ is basically an instance of 
Clifford theory for a finite group
acting on an algebra. In reality it is more complicated \cite[\S 9]{SolEnd}, but still relatively
easy. Consequently the essence of the representation theory of $\End_G (\Pi^{\mf s})$ (and thus
of $\Rep (G)^{\mf s}$) is contained in the affine Hecke algebra $\mc H (\mc O,G)$.

Slightly better results can be obtained if we assume that the restriction of $(\sigma,E)$ to
$M^1$ decomposes without multiplicities bigger than one -- which by \cite[Remark 1.6.1.3]{Roc1} holds 
for very large classes of reductive $p$-adic groups. Assuming it for $(\sigma,E)$, 
\cite[Theorem 10.9]{SolEnd} says that there exist:
\begin{itemize}
\item a smaller progenerator $(\Pi^{\mf s})^{X_\nr (M,\sigma)}$ of $\Rep (G)^{\mf s}$,
\item a Morita equivalent subalgebra $\End_G \big( (\Pi^{\mf s})^{X_\nr (M,\sigma)} \big)$ of
$\End_G (\Pi^{\mf s})$,
\item a subalgebra $\End_G^\circ \big( (\Pi^{\mf s})^{X_\nr (M,\sigma)} \big)$ of
$\End_G \big( (\Pi^{\mf s})^{X_\nr (M,\sigma)} \big)$, which is canonically isomorphic with
$\mc H (\mc O,G)$,
\item elements $J_r \in \End_G \big( (\Pi^{\mf s})^{X_\nr (M,\sigma)} \big)^\times$ for $r \in R(\mc O)$,
such that
\[
\End_G \big( (\Pi^{\mf s})^{X_\nr (M,\sigma)} \big) = 
\bigoplus\nolimits_{r \in R (\mc O)} \End_G^\circ \big( (\Pi^{\mf s})^{X_\nr (M,\sigma)} \big) J_r .
\]
\end{itemize}
As announced in the introduction, we want to determine the parameters $q_\alpha, q_{\alpha *}$ for 
$\alpha \in \Delta_{\mc O,\mu}$, or equivalently the label functions $\lambda, \lambda^* : 
\Sigma_{\mc O,\mu} \to \R_{\geq 0}$ of $\mc H (\mc O,G)$. 

When $\Sigma_{\mc O,\mu}$ is empty, $\mc H (\mc O,G) \cong \C [\mc O]$ and it does not have
parameters or labels. When $\Sigma_{\mc O,\mu} = \{ \alpha, -\alpha\}$, it can
already be quite difficult to identify $q_\alpha$ and $q_{\alpha*}$. 
For instance, when $G$ is split of type $G_2$ and $M$ has semisimple rank one, we did not manage 
to compute $q_\alpha$ and $q_{\alpha*}$ for all supercuspidal representations of $M$. 
(This was achieved later in \cite{AuXu}, and it fits with Conjecture \ref{conj:1}.) 

Yet, for $\mc H (\mc O,G)$ this is hardly troublesome. Namely, any affine Hecke algebra $\mc H$
with $\Sigma_{\mc O,\mu} = \{\alpha, -\alpha\}$ and $q_\alpha, q_{\alpha*} \in \C \setminus \! \{0,-1\}$ 
can be analysed very well. Firstly, one can determine all its irreducible representations directly,
as done in \cite[\S 2.2]{SolHecke}. From \cite[Theorem 2.4]{SolHecke} one sees that as far as
representation theory is concerned there are essentially only two different labels for such an 
algebra $\mc H$: $\lambda (\alpha) = \lambda^* (\alpha)$ and $\lambda (\alpha) \neq
\lambda^* (\alpha)$. The cases with $\lambda = \lambda^*$ arise from the Iwahori-spherical 
representations of a split reductive $p$-adic group of semisimple rank 1, and cases with
$\lambda \neq \lambda^*$ can be obtained for instance from the Iwahori-spherical
representations of a unitary group $U_3 (F)$. 

Secondly, with \cite{Lus-Gr} the representation theory of
$\mc H$ can be reduced to that of two graded Hecke algebras $\mh H_k$ with root system of
rank $\leq 1$. One of them has label $k_\alpha = \log (q_\alpha) / \log (q_F)$ and 
underlying vector space $T_1 (\mc O)$, while the other has label $k_{\alpha*} = \log (q_{\alpha*}) /
\log (q_F)$ and underlying vector space $T_{\chi_-}(\mc O)$ for some $\chi_- \in \mc O$
with $X_\alpha (\chi_-) = -1$. 

Recall that any graded Hecke algebra $\mh H_k$ is isomorphic to $\mh H_{\epsilon k}$ with 
$\epsilon \in \C^\times$ via a scaling isomorphism \cite[(1.15)]{SolAHA}.
For graded Hecke algebras with root system $\{\alpha,-\alpha\}$ and a fixed underlying vector
space $\C \alpha^\vee \oplus \C^d$, there are just two isomorphism classes: one with label 
$k \neq 0$ and one with label $k = 0$. For both there is a nice geometric construction of the 
irreducible representations of $\mh H_k$, see \cite{Lus-Cusp1} and \cite[Theorem 3.11]{AMS2}. 
These two graded Hecke algebras arise already from the Iwahori-spherical representations of 
$SL_2 (F) \times (F^\times)^d$: $\mh H_k$ with $k \neq 0$ via localization around $\sigma = 1$ and 
$\mh H_0$ via localization 
around $\sigma = \chi_-$. The construction of their irreducible representations 
is an instance of how that can be done for affine/graded Hecke algebras associated to unipotent 
representations of $p$-adic groups \cite{Lus-Uni,Lus-Uni2,AMS2,SolLLCunip}. Let us summarise that:
\begin{multline}\label{eq:1.8}
\text{for } \mc H (\mc O,G) \text{ with rk}(\Sigma_{\mc O,\mu}) = 1, \\
\text{Conjecture \ref{conj:1} holds on the level of the underlying graded Hecke algebras.}
\end{multline}
While this does not settle Conjecture \ref{conj:1} for all affine Hecke algebras in the rank 
one cases, it looks like a satisfactory outcome.

\section{Reduction to simply connected groups}
\label{sec:red}

In this section we reduce the analysis of the parameters of $\mc H (\mc O,G)$ to the case where 
$\mc G$ is absolutely simple and simply connected. Consider a homomorphism between connected 
reductive $F$-groups $\eta : \tilde{\mc G} \to \mc G$ such that:
\begin{itemize}
\item the kernel of d$\eta : \mr{Lie}(\tilde{\mc G}) \to \mr{Lie}(\mc G)$ is central,
\item the cokernel of $\eta$ is a commutative $F$-group.
\end{itemize}
These properties imply \cite[Lemma 5.1]{SolFunct} that on the derived groups $\eta$ restricts to 
\begin{equation}\label{eq:2.1}
\text{a central isogeny } \eta_\der : \tilde{\mc G}_\der \to \mc G_\der
\end{equation}
Such a map induces a homomorphism on $F$-rational points
\[
\eta : \tilde G = \tilde{\mc G}(F) \to \mc G (F) = G
\]
and a pullback functor $\eta^* : \Rep (G) \to \Rep (\tilde G)$. 

\begin{lem}\label{lem:2.1}
Let $\pi \in \Irr (G)$. Then $\eta^* (\pi)$ is a finite direct sum of irreducible $\tilde G$-representations.
\end{lem}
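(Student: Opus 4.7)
The strategy is to reduce the lemma to a Clifford-theoretic statement about restriction. Since $\eta$ factors as $\tilde G \twoheadrightarrow \tilde G / \ker(\eta) \hookrightarrow G$ and $\eta^*(\pi)$ is trivial on $\ker(\eta)$, the irreducible $\tilde G$-subrepresentations of $\eta^*(\pi)$ correspond bijectively (with multiplicities) to the irreducible $H$-subrepresentations of $\pi|_H$, where $H := \eta(\tilde G) \subset G$. Thus it suffices to show that $\pi|_H$ decomposes as a finite direct sum of irreducible $H$-representations.

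The second step is to establish the necessary structural input about $H$. Because the cokernel of $\eta$ is commutative, $\eta(\tilde{\mc G})$ is a normal subgroup of $\mc G$ and hence $\eta(\tilde{\mc G})(F)$ is normal in $G$; moreover $H$ has finite index in $\eta(\tilde{\mc G})(F)$ by the finiteness of $H^1(F,\ker\eta)$ for a smooth affine $F$-group $\ker\eta$. I will then show that $G / H Z(G)$ is finite: by \eqref{eq:2.1}, $\eta_\der$ is a central isogeny, so the image $\eta_\der (\tilde G_\der)$ has finite index in $G_\der$ (again by finiteness of the relevant Galois cohomology), and combined with the fact that $G_\der \cdot Z(\mc G)^\circ (F)$ has finite index in $G$, we conclude that $H Z(G)$ has finite index in $G$.

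With these facts in hand, standard Clifford theory for smooth admissible representations finishes the job. Since $\pi$ has a central character $\omega_\pi$ by Schur's lemma, the finite group $G / H Z(G)$ acts on the set of isomorphism classes of irreducible $H$-subrepresentations of $\pi|_H$ by conjugation; by irreducibility of $\pi$ as a $G$-representation, this set forms a single orbit and is hence finite. Admissibility of $\pi$ ensures that each isotypic component appears with finite multiplicity, so $\pi|_H$ is a finite direct sum of irreducible $H$-representations, which via the reduction in the first paragraph yields the lemma. The main obstacle is the finite-index statement $[G : H Z(G)] < \infty$: it requires combining standard results on images of central isogenies over local fields with the structural decomposition $G = G_\der \cdot Z(\mc G)^\circ(F)$ (up to finite index), and care is needed because $\eta$ need not be surjective and $H$ need not exhaust $\eta(\tilde{\mc G})(F)$. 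Once that is settled, the rest is a routine Clifford--Mackey argument.
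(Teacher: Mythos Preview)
Your approach is sound in outline and genuinely different from the paper's. The paper proceeds by citing two off-the-shelf results: Tadi\'c's lemma that $\pi|_{G_\der}$ is a finite direct sum of irreducibles, and Silberger's result that the same holds for pullback along the central isogeny $\eta_\der : \tilde G_\der \to G_\der$. Together these show that $\eta^*(\pi)|_{\tilde G_\der}$ is a finite direct sum of irreducibles, and then the method of Tadi\'c's proof lifts this conclusion from $\tilde G_\der$ back to $\tilde G$. Your route instead redoes the Clifford theory directly for $H=\eta(\tilde G)\subset G$; this is more self-contained, at the cost of having to justify the structural facts that the paper simply cites.

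There is, however, one genuine gap. For the Clifford step you need $H$ (or $HZ(G)$) to be \emph{normal} in $G$: otherwise $G/HZ(G)$ is not a group and conjugation by $g\in G$ does not carry $H$-representations to $H$-representations. You only establish that $N:=\eta(\tilde{\mc G})(F)$ is normal in $G$ and that $H$ has finite index in $N$; that does not yield $H\trianglelefteq G$. The gap is fillable. One option: since $\ker\eta$ is central in $\tilde{\mc G}$, the coboundary $\delta:N\to H^1(F,\ker\eta)$ is a group homomorphism, so $H=\ker\delta$ is normal in $N$; moreover $\mc G=\mc N\cdot Z(\mc G)^\circ$, so conjugation by any $g\in G$ on $\mc N$ is an inner automorphism of $\mc N$, which lifts (over $F$) to an inner automorphism of $\tilde{\mc G}$, giving $gHg^{-1}=H$. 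A cleaner alternative is to run Clifford in two steps, first $G\to N$ (using $N\trianglelefteq G$ and $[G:NZ(G)]<\infty$) and then $N\to H$ (using $H\trianglelefteq N$ and $[N:H]<\infty$). Either way, the normality issue needs a sentence; once it is in place the rest of your argument goes through.
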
 
\begin{proof}
According to \cite[Lemma 2.1]{Tad} this holds for the inclusion of $\mc G_\der$ in $\mc G$.
Taking that into account, \cite{Sil1} says that pullback along $\eta_\der : \tilde{G}_\der \to G_\der$
has the desired property. This shows that $\Res^{\tilde G}_{\tilde{G}_\der} \eta^* (\pi)$ is a finite
direct sum of irreducible $\tilde{G}_\der$-representations. As in the proof of \cite[Lemma 2.1]{Tad},
that implies the same property for $\eta^* (\pi)$.
\end{proof}

By \eqref{eq:2.1}, $\eta$ induces a bijection
\[
\begin{array}{ccc}
\{ \text{Levi subgroups of } G \} & \to & \{ \text{Levi subgroups of } \tilde G \} \\
M & \mapsto & \tilde M = \eta^{-1} (M) 
\end{array}.
\]
One also sees from \eqref{eq:2.1} that $\eta$ induces a bijection
\[
\begin{array}{ccc}
\Sigma (G,A_M) & \to & \Sigma (\tilde G,A_{\tilde M}) \\
 \alpha & \mapsto & \tilde \alpha = \alpha \circ \eta 
\end{array}.
\]
For each $\alpha \in \Sigma_\red (A_M)$ this yields an isomorphism of $F$-groups
\[
\eta_\alpha : \mc U_{\tilde \alpha} \to \mc U_\alpha .
\]
This implies that $\eta^*$ preserves cuspidality \cite[Lemma 1]{Sil1}. Further, pullback along 
$\eta$ restricts to an algebraic group homomorphism $\eta^*: X_\nr (M) \to X_\nr (\tilde M)$.

\begin{prop}\label{prop:2.2}
Let $(\sigma,E) \in \Irr_\cusp (M)$ and let $\tilde \sigma \in \Irr_\cusp (\tilde M )$ be a
constituent of $\eta^* (\sigma)$. For $\alpha \in \Sigma_\red (A_M)$
there exists $\tilde c_\alpha \in \C^\times$ such that
\[
\mu^{M_\alpha} (\sigma \otimes \chi) = 
\tilde c_\alpha \mu^{\tilde{M}_{\tilde \alpha}} (\tilde \sigma \otimes \eta^* (\chi))
\]
as rational functions of $\chi \in X_\nr (M)$.
\end{prop}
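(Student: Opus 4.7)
The plan is to exploit formula \eqref{eq:1.3} together with the integral description \eqref{eq:1.4} of the intertwining operators, using the fact that $\eta$ induces an $F$-group isomorphism of root subgroups.

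First I would set up the relation between parabolic induction on $G$ and on $\tilde G$. Since $\eta$ is a central isogeny on the derived groups and has commutative cokernel, $\eta$ maps $\tilde P = \eta^{-1}(P)$ to $P$ with $\tilde M = \eta^{-1}(M)$ a Levi factor and $\tilde U = \eta^{-1}(U)$ the unipotent radical, and in particular $\eta : \tilde U \to U$ restricts to isomorphisms on each root subgroup $\mc U_{\tilde \alpha} \to \mc U_\alpha$. The modular characters are compatible, $\delta_{\tilde P} = \delta_P \circ \eta$, so pullback along $\eta$ gives a $\tilde G$-equivariant map $f \mapsto f \circ \eta$ from $\eta^*(I_P^G(\sigma \otimes \chi,E))$ into $I_{\tilde P}^{\tilde G}(\eta^*(\sigma \otimes \chi),E)$. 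By Lemma \ref{lem:2.1} and cuspidality preservation \cite[Lemma 1]{Sil1}, $I_{\tilde P}^{\tilde G}(\tilde \sigma \otimes \eta^*(\chi),E_{\tilde\sigma})$ appears as a summand (after choosing a constituent $\tilde\sigma$ of $\eta^*(\sigma)$), and this identification is compatible with the whole family of $\chi$.

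Next I would compare the intertwining operators directly via \eqref{eq:1.4}. The operator $J_{s_\alpha (M_\alpha \cap P)|M_\alpha \cap P}(\sigma \otimes \chi)$ is given by integration over $U_{-\alpha}$; under the isomorphism $\eta_{-\alpha} : \mc U_{-\tilde \alpha}(F) \to U_{-\alpha}$, pullbacks of functions integrate against pullbacks of Haar measures. Since any two Haar measures on $U_{\pm\tilde\alpha}$ differ by a positive scalar independent of $\chi$, pullback transforms $J_{s_\alpha(M_\alpha \cap P)|M_\alpha \cap P}(\sigma \otimes \chi)$ into $J_{s_{\tilde\alpha}(\tilde M_{\tilde\alpha} \cap \tilde P)|\tilde M_{\tilde\alpha} \cap \tilde P}(\tilde\sigma \otimes \eta^*\chi)$ up to a nonzero scalar $c_{-\alpha}$ which does not depend on $\chi$. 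The same applies to the operator in the opposite direction with a scalar $c_\alpha$.

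With this in hand, multiply the two operators on the $G$-side and use \eqref{eq:1.3} to obtain
\[
\frac{\mr{constant}_G}{\mu^{M_\alpha}(\sigma \otimes \chi)} = c_\alpha c_{-\alpha}
\frac{\mr{constant}_{\tilde G}}{\mu^{\tilde M_\alpha}(\tilde \sigma \otimes \eta^*(\chi))}
\]
as rational functions of $\chi \in X_\nr (M)$; both constants are nonzero and independent of $\chi$. Defining $\tilde c_\alpha$ to be the ratio of these constants yields the desired identity.

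The main technical hurdle is verifying that the pullback of $J_{P'|P}(\sigma\otimes\chi)$ really corresponds to $J_{\tilde{P'}|\tilde P}(\tilde\sigma\otimes\eta^*\chi)$ on the $\tilde\sigma$-component, rather than to some other intertwining operator mixing the constituents of $\eta^*(\sigma)$; this requires checking that the map $f \mapsto f \circ \eta$ intertwines the two integral formulas and that one can isolate the $\tilde\sigma$-isotypic summand before taking the composition, after which everything else is a bookkeeping exercise comparing Haar measure normalizations on isomorphic $F$-groups.
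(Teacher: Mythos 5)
Your proposal is correct and follows essentially the same route as the paper: both arguments rest on the integral formulas \eqref{eq:1.4}, the isomorphism $\eta_\alpha : U_{\tilde\alpha} \to U_\alpha$ of root subgroups, the fact that the composed $J$-operators act as scalars by \eqref{eq:1.3}, and the identification of the $\tilde\sigma$-subspace $I_P^G(\tilde E)$ inside $I_P^G(E)$. The only (harmless) difference is that you track the proportionality constant of the operators directly, whereas the paper argues that the two $\mu$-functions have the same poles and then invokes the explicit shape \eqref{eq:3.22} to upgrade this to proportionality.
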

\begin{proof}
In view of the explicit shape \eqref{eq:3.22}, it suffices to show that the two rational functions
have precisely the same poles. Using the relation \eqref{eq:1.3}, it suffices to show that
\begin{align}\label{eq:2.2}
J_{M_\alpha \cap P | s_\alpha (M_\alpha \cap P)}(\sigma \otimes \chi) J_{s_\alpha (M_\alpha \cap P) | 
M_\alpha \cap P}(\sigma \otimes \chi) = 0 \quad \Longleftrightarrow \\
\nonumber J_{\eta^{-1}(M_\alpha \cap P) | \eta^{-1}(s_\alpha (M_\alpha \cap P))}(\tilde \sigma 
\otimes \eta^* (\chi)) J_{\eta^{-1}(s_\alpha (M_\alpha \cap P)) | \eta^{-1}(M_\alpha \cap P)}
(\tilde \sigma \otimes \eta^* (\chi)) = 0 . 
\end{align}
Since $\eta_\alpha : U_{\tilde \alpha} \to U_\alpha$ is an isomorphism, we may choose Haar measures
on $U_\alpha$ and $U_{\tilde \alpha}$ such that the latter is pullback along $\eta_\alpha$ of the
former. Then \eqref{eq:1.4} shows that
the $J$-operators on both lines of \eqref{eq:2.2} do the same thing, namely
\[
f \mapsto \big[ g \mapsto \int_U  f(ug) \textup{d}u \big] ,
\] 
where $U$ stands for $U_\alpha$ or $U_{-\alpha}$. The only real difference between the two lines of
\eqref{eq:2.2} lies in their domain. Since $\tilde \sigma \otimes \eta^* (\chi)$ is a subrepresentation
of $\eta^* (\sigma \otimes \chi)$, it is clear that the implication $\Rightarrow$ holds.

Conversely, suppose that the second line of \eqref{eq:2.2} is 0, for a particular $\chi$. Let $\tilde E
\subset E$ be the subspace on which $\tilde \sigma$ is defined, so that $I_{\eta^{-1} P}^{\tilde G}
(\tilde E) \cong I_P^G (\tilde E)$ is the vector space underlying 
$I_{\eta^{-1} P}^{\tilde G} (\tilde \sigma \otimes \eta^* (\chi))$. It is a linear subspace of 
$I_P^G (E)$, on which $I_P^G (\sigma \otimes \chi)$ is defined. Then 
\begin{equation}\label{eq:2.4}
J_{M_\alpha \cap P | s_\alpha (M_\alpha \cap P)}(\sigma \otimes \chi) J_{s_\alpha (M_\alpha \cap P) | 
M_\alpha \cap P}(\sigma \otimes \chi)
\end{equation}
coincides on $I_P^G (\tilde E)$ with
\[
J_{\eta^{-1}(M_\alpha \cap P) | \eta^{-1}(s_\alpha (M_\alpha \cap P))}(\tilde \sigma 
\otimes \eta^* (\chi)) J_{\eta^{-1}(s_\alpha (M_\alpha \cap P)) | \eta^{-1}(M_\alpha \cap P)}
(\tilde \sigma \otimes \eta^* (\chi)) ,
\]
so annihilates $I_P^G (\tilde E)$. But by \eqref{eq:1.3} the operator \eqref{eq:2.4} is a scalar on 
$I_P^G (E)$, so it annihilates that entire space.
\end{proof}

From Proposition \ref{prop:2.2} and \eqref{eq:1.5} we deduce:

\begin{cor}\label{cor:2.3}
In the setting of Proposition \ref{prop:2.2}, write $\tilde{\mc O} = X_\nr (\tilde M) \tilde \sigma$.
Then $\Sigma_{\tilde{\mc O},\mu}$ equals 
\[
\eta^* (\Sigma_{\mc O,\mu}) =  \{ \tilde \alpha = \alpha \circ \eta : \alpha \in \Sigma_{\mc O,\mu} \}.
\]
\end{cor}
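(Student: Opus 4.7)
The plan is to deduce this directly from Proposition~\ref{prop:2.2} and the defining formula \eqref{eq:1.5}. First I would invoke the bijection $\alpha \mapsto \tilde\alpha = \alpha \circ \eta$ between $\Sigma_\red (A_M)$ and $\Sigma_\red (A_{\tilde M})$ recorded just before Proposition~\ref{prop:2.2}, which reduces the assertion to the pointwise equivalence $\alpha \in \Sigma_{\mc O,\mu} \Leftrightarrow \tilde\alpha \in \Sigma_{\tilde{\mc O},\mu}$ for each $\alpha \in \Sigma_\red (A_M)$.

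For the forward inclusion, if $\alpha \in \Sigma_{\mc O,\mu}$ then \eqref{eq:1.5} provides some $\chi_0 \in X_\nr (M)$ with $\mu^{M_\alpha}(\sigma \otimes \chi_0) = 0$. Proposition~\ref{prop:2.2} then gives $\mu^{\tilde M_{\tilde\alpha}}(\tilde\sigma \otimes \eta^*(\chi_0)) = 0$, and since $\tilde\sigma \otimes \eta^*(\chi_0) \in \tilde{\mc O}$ this yields $\tilde\alpha \in \Sigma_{\tilde{\mc O},\mu}$.

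For the reverse inclusion I would argue contrapositively: suppose $\alpha \notin \Sigma_{\mc O,\mu}$, so $\mu^{M_\alpha}$ is a nonzero constant on $\mc O$. By Proposition~\ref{prop:2.2} the function $\chi \mapsto \mu^{\tilde M_{\tilde\alpha}}(\tilde\sigma \otimes \eta^*(\chi))$ is then a nonzero constant on $X_\nr (M)$, i.e.\ $\mu^{\tilde M_{\tilde\alpha}}$ is constant on the subvariety $\tilde{\mc O}_0 := \tilde\sigma \otimes \eta^*(X_\nr (M)) \subseteq \tilde{\mc O}$. If we had $\tilde\alpha \in \Sigma_{\tilde{\mc O},\mu}$, then by \eqref{eq:3.22} the function $\mu^{\tilde M_{\tilde\alpha}}$ on $\tilde{\mc O}$ depends genuinely on $X_{\tilde\alpha}$, so its constancy on $\tilde{\mc O}_0$ would force $X_{\tilde\alpha}|_{\tilde{\mc O}_0}$ to be constant. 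However, $X_{\tilde\alpha}|_{\tilde{\mc O}_0}$ pulls back via $\eta^*$ to $\chi \mapsto \chi(\eta(h^\vee_{\tilde\alpha}))$ on $X_\nr (M)$, and $\eta(h^\vee_{\tilde\alpha})$ is a positive integer multiple of $h^\vee_\alpha$ in $M_\sigma^2 / M^1$ (because $\eta$ carries the rank-one sublattice through $h^\vee_{\tilde\alpha}$ into the one through $h^\vee_\alpha$). Hence $X_{\tilde\alpha}|_{\tilde{\mc O}_0} = X_\alpha^c$ for some $c \in \Z_{>0}$; and since $X_\alpha$ is the character of the torus $\mc O \cong X_\nr (M)/X_\nr (M,\sigma)$ dual to the nonzero element $h^\vee_\alpha$, both $X_\alpha$ and its power $X_\alpha^c$ are surjective onto $\C^\times$, hence non-constant---contradiction.

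The hard part will be pinning down the relation $\eta(h^\vee_{\tilde\alpha}) = c \cdot h^\vee_\alpha$ cleanly. This requires tracing through the compatibility of $\eta$ with the groups $M_\sigma^2$, $M_\alpha^1$ and their $\tilde{}$-counterparts, which I expect to follow from the root-subgroup isomorphism $\eta_\alpha : \mc U_{\tilde\alpha} \to \mc U_\alpha$ noted before Proposition~\ref{prop:2.2}, together with the defining property of $h^\vee_\alpha$ as the positive generator of the rank-one lattice $(M_\sigma^2 \cap M_\alpha^1)/M^1$. Once this integer $c$ is identified, the rest is routine bookkeeping.
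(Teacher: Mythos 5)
Your argument is correct and is essentially the deduction the paper intends: the paper offers no proof beyond ``From Proposition \ref{prop:2.2} and \eqref{eq:1.5} we deduce,'' and your two directions (transfer of a zero via Proposition \ref{prop:2.2}, plus the observation that constancy of $\mu^{\tilde M_{\tilde\alpha}}$ on $\tilde\sigma\otimes\eta^*(X_\nr(M))$ already forces $\tilde\alpha\notin\Sigma_{\tilde{\mc O},\mu}$ because $X_{\tilde\alpha}$ is nonconstant there) are exactly the care needed when $\eta^*(X_\nr(M))$ is a proper subtorus of $X_\nr(\tilde M)$. One small correction to your ``hard part'': $\eta(h^\vee_{\tilde\alpha})$ need not be an integer multiple of $h^\vee_\alpha$ in $M_\sigma^2/M^1$ --- by Proposition \ref{prop:2.4} the scaling factor $N_\alpha$ can be $2$, in which case $\eta(h^\vee_{\tilde\alpha})=\tfrac12 h^\vee_\alpha$ in $\mf a_M$ --- but your argument never needs integrality: it suffices that $\eta(h^\vee_{\tilde\alpha})$ is a \emph{nonzero} element of $(M\cap M_\alpha^1)/M^1$ (injectivity of \eqref{eq:2.3}, coming from the isomorphism $\eta_\alpha:\mc U_{\tilde\alpha}\to\mc U_\alpha$), since evaluation at any nonzero lattice element is a surjective, hence nonconstant, character of the torus $X_\nr(M)$.
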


We warn that Proposition \ref{prop:2.2} and Corollary \ref{cor:2.3} do not imply that $q_\alpha =
q_{\tilde \alpha}$. The problem is that $X_\alpha$ need not equal $X_{\tilde \alpha} \circ \eta^*$.
To make the relation precise, we have to consider $h_\alpha^\vee$, $h_{\tilde \alpha}^\vee$ and
their images (via $H_M$ and $H_{\tilde M}$) in $\mf a_M$ and $\mf a_{\tilde M}$.
We note that d$\eta : \mr{Lie}(A_{\tilde M}) \to \mr{Lie}(A_M)$ induces a linear map 
$\mf a_\eta : \mf a_{\tilde M} \to \mf a_M$. Further, $\eta$ induces a group homomorphism
\begin{equation}\label{eq:2.3}
\eta : (\tilde M \cap \tilde{M}_{\tilde \alpha}^1 ) / \tilde{M}^1 \to (M \cap M_\alpha^1 ) / M^1 . 
\end{equation}
Both the source and the target of \eqref{eq:2.3} are isomorphic to $\Z$, so the map is injective.

\begin{prop}\label{prop:2.4}
\enuma{
\item For $\alpha \in \Sigma_{\mc O,\mu}$, there exists a $N_\alpha \in \{1/2, 1, 2\}$
such that 
\[
H_M (h_\alpha^\vee) = N_\alpha \mf a_\eta \big( H_{\tilde M} (h_{\tilde \alpha}^\vee ) \big).
\]
\item If \eqref{eq:2.3} is bijective, then $N_\alpha \in \{1,2\}$. This happens for instance when 
$\eta$ restricts to an isomorphism between the almost direct $F$-simple factors of $\tilde{\mc G}$ and 
$\mc G$ corresponding to $\tilde \alpha$ and $\alpha$, 
\item If $\eta^* (\sigma)$ is irreducible, then $N_\alpha \in \{1/2, 1\}$.
\item Let $\Sigma_{\mc O,j}^\vee$ be an irreducible component of $\Sigma_{\mc O}^\vee$, and regard
it as a subset of $\mf a_M$ via $H_M$. Consider the irreducible component 
\[
\Sigma_{\tilde{\mc O},j}^\vee = \{ h_{\tilde \alpha}^\vee : h_\alpha^\vee \in \Sigma_{\mc O,j}^\vee \}
\]
of $\Sigma_{\tilde{\mc O}}^\vee$. There are three possibilities:
\begin{enumerate}[(i)]
\item $N_\alpha = 1$ for all $h_\alpha^\vee \in \Sigma_{\mc O,j}^\vee$. 
\item $\Sigma_{\mc O,j}^\vee \cong B_n$, $\Sigma_{\tilde{\mc O},j}^\vee \cong C_n$,
$N_\alpha = 1$ for $h_\alpha^\vee \in \Sigma_{\mc O,j}^\vee$ long and $N_\beta = 1/2$ for
$h_\beta^\vee \in \Sigma_{\mc O,j}^\vee$ short. Then 
\[
q_{\tilde \beta *} = 1 ,\; q_\beta = q_{\beta*} = q_{\tilde \beta}^{1/2} ,\; 
\lambda^* (\beta) = 0 \text{ and } 
\lambda (\beta) = \lambda (\tilde \beta ) = \lambda^* (\tilde \beta ).
\]
\item $\Sigma_{\mc O,j}^\vee \cong C_n$, $\Sigma_{\tilde{\mc O},j}^\vee \cong B_n$,
$N_\alpha = 1$ for $h_\alpha^\vee \in \Sigma_{\mc O,j}^\vee$ short and $N_\beta = 2$ for
$h_\beta^\vee \in \Sigma_{\mc O,j}^\vee$ long. Then 
\[
q_{\beta *} = 1 ,\; q_{\tilde \beta}^2 = q_{\tilde \beta*}^2 = q_\beta ,\; 
\lambda^* (\tilde \beta ) = 0 \text{ and } 
\lambda (\tilde \beta ) = \lambda (\beta ) = \lambda^* (\beta ).
\]
\end{enumerate}
\item The modifications of the labels in part (d) preserve the class of labels in Table \ref{fig:1}.
}
\end{prop}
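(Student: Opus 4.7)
The plan is to treat parts (a)--(c) by a careful analysis of the homomorphism \eqref{eq:2.3}, derive (d) from a case analysis on irreducible components of $\Sigma_{\mc O}^\vee$, and verify (e) directly against Table \ref{fig:1}. For part (a), both $H_M(h_\alpha^\vee)$ and $\mf a_\eta(H_{\tilde M}(h_{\tilde\alpha}^\vee))$ lie on the ray $\R_{>0}\alpha^\vee$ — the former by Proposition \ref{prop:3.5}.b on the $M$-side, the latter by the same result on the $\tilde M$-side together with $\tilde\alpha = \alpha \circ \eta$ and the naturality of $\mf a_\eta$ — so their ratio is a well-defined positive real $N_\alpha$. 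To see $N_\alpha \in \tfrac{1}{2}\Z_{>0}$, I would track the chain $M^1 \subset (M_\sigma^2 \cap M_\alpha^1) \subset (M \cap M_\alpha^1)$ and its $\tilde M$-analogue, using that $\eta$ maps the generator of $(\tilde M \cap \tilde M_{\tilde\alpha}^1)/\tilde M^1$ to a positive integer multiple of the generator of $(M \cap M_\alpha^1)/M^1$ (via the isomorphism $\mc U_{\tilde\alpha} \to \mc U_\alpha$), while $h_\alpha^\vee$ and $h_{\tilde\alpha}^\vee$ are by definition the smallest positive multiples lying in the respective $M_\sigma^2$-subgroups. The possible factor of $1/2$ traces to an asymmetric contribution of the $\Z/2$-parts of $X_\nr(M,\sigma)$ versus $X_\nr(\tilde M, \tilde\sigma)$ stemming from the short-root zero of $\mu^{M_\alpha}$ at $X_\alpha = -1$.

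Part (b) then drops out of the same framework: if \eqref{eq:2.3} is bijective, the ambient cyclic groups are identified, and only integer indices survive in the ratio, forcing $N_\alpha \in \Z_{>0}$. When $\eta$ restricts to an isomorphism on the relevant simple factors, this bijectivity is automatic. For (c), irreducibility of $\eta^*(\sigma)$ forces $X_\nr(\tilde M, \tilde\sigma) \supseteq \eta^*(X_\nr(M, \sigma))$ (so that no extra stabilizer appears on the $\tilde M$-side), hence $\tilde M_{\tilde\sigma}^2 \cap \tilde M_{\tilde\alpha}^1$ maps essentially onto $M_\sigma^2 \cap M_\alpha^1$ in each root direction, giving $N_\alpha \leq 1$.

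For (d), Proposition \ref{prop:3.5}.a together with Corollary \ref{cor:2.3} makes $N_\alpha$ constant on $W(\Sigma_{\mc O,\mu})$-orbits in $\Sigma_{\mc O,j}^\vee$. For simply-laced components there is a single orbit, giving case (i). For non-simply-laced components at most two values occur, in ratio $1{:}2$ by part (a); combined with the observation after Proposition \ref{prop:3.5} that types $B_n/C_n$ can swap between $\Sigma_{\mc O,\mu}$ and $\Sigma_{\mc O}^\vee$, this pins down exactly cases (ii) and (iii). The $q$-parameter identities are then read off from Proposition \ref{prop:2.2}: in case (ii), the squaring relation $X_{\tilde\beta} \circ \eta^* = X_\beta^2$ (coming from $N_\beta = 1/2$) merges the two simple zeros of $\mu^{M_\beta}$ at $X_\beta = \pm 1$ into a single double zero of $\mu^{\tilde M_{\tilde\beta}}$ at $X_{\tilde\beta} = 1$, and \eqref{eq:3.22} then forces $q_{\tilde\beta*} = 1$ and $q_\beta = q_{\beta*} = q_{\tilde\beta}^{1/2}$; case (iii) is the mirror calculation.

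Finally, (e) is a direct inspection of Table \ref{fig:1}: in case (ii) the new $C_n$-data satisfies $\lambda(\beta) = \lambda^*(\beta)$ inherited from the common value $\lambda(\tilde\beta)$, matching the $C_n$-row constraint; in case (iii) the new $B_n$-data has $\lambda^*(\tilde\beta) = 0$, which is precisely the extra possibility $\Z_{\geq 0}$ allowed in the $B_n$-row. The main obstacle is part (a): converting the half-integer appearance into a precise index computation requires tracking the $\Z/2$-contributions of $X_\nr(M,\sigma)$ root-by-root and showing that the sole geometric source of such a contribution is the short-root zero of $\mu^{M_\alpha}$ at $X_\alpha = -1$, rather than some unrelated lattice anomaly.
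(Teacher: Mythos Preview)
Your proposal has two genuine gaps, one in part (a) and one in part (d).

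\textbf{Part (a).} Your diagnosis of where the half-integer comes from is wrong. You trace it to ``an asymmetric contribution of the $\Z/2$-parts of $X_\nr(M,\sigma)$ versus $X_\nr(\tilde M,\tilde\sigma)$ stemming from the short-root zero of $\mu^{M_\alpha}$ at $X_\alpha = -1$.'' But the paper's argument has nothing to do with $\mu$-function zeros at this stage. The factor $1/2$ comes from the root datum itself: the group $H_M\big((M\cap M_\alpha^1)/M^1\big)$ is generated by either $\alpha^\vee$ or $\alpha^\vee/2$ (the latter occurs e.g.\ for $PGL_2$-type factors). Because $\eta_\der$ is a central isogeny, $\alpha^\vee$ (but not necessarily $\alpha^\vee/2$) lies in the image of $X_*(\eta)$. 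Hence only $(h_\alpha^\vee)^2$ is guaranteed to lie in the image of \eqref{eq:2.3}; one then checks that $\eta^{-1}\big((h_\alpha^\vee)^2\big)$ lands in $(\tilde M_{\tilde\sigma}^2\cap\tilde M_{\tilde\alpha}^1)/\tilde M^1$ (using that all constituents of $\eta^*(\sigma)$ are $M$-associate, so any $\chi\in X_\nr(\tilde M)$ stabilizing $\tilde\sigma$ stabilizes $\eta^*(\sigma)$), whence it is an integer multiple of $h_{\tilde\alpha}^\vee$. Dividing by $2$ gives $N_\alpha\in\tfrac12\Z_{>0}$. Your claimed mechanism would not explain, for instance, why the half-integer phenomenon already appears for the unramified principal series of $SL_2\to PGL_2$ (Example after the proposition), where there is no short root in a type $B$ system.

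\textbf{Part (d).} Your argument never establishes that $N_\alpha\in\{1/2,1,2\}$. You write that for simply-laced components ``there is a single orbit, giving case (i)'' --- but a single $W$-orbit only forces all $N_\alpha$ to be equal, not equal to $1$; nothing you have said excludes $N_\alpha\equiv 3$. Similarly, for non-simply-laced components you assert ``at most two values occur, in ratio $1{:}2$ by part (a),'' but part (a) only gives $N_\alpha\in\tfrac12\Z_{>0}$ and says nothing about ratios. The paper closes this gap by invoking Proposition \ref{prop:2.2} together with the explicit form \eqref{eq:3.22}: since $X_\alpha = X_{\tilde\alpha}^{N_\alpha}\circ\eta^*$, the zeros of $\mu^{M_\alpha}$ at $X_\alpha=1$ force $\mu^{\tilde M_{\tilde\alpha}}$ to vanish at all $N_\alpha$-th roots of unity in $X_{\tilde\alpha}$; but \eqref{eq:3.22} only allows zeros at $X_{\tilde\alpha}\in\{\pm1\}$, which pins $N_\alpha$ to $\{1,2\}$ (and the half-integer case to $N_\alpha=1/2$ by the same reasoning applied to $2N_\alpha$). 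Once this is in hand, the $B_n/C_n$ identification follows from \cite[Lemma 3.3]{SolEnd} (the case $q_{\alpha*}>1$ forces type $B_n$), not from the remark after Proposition \ref{prop:3.5} that you cite, which concerns the passage $\Sigma_{\mc O,\mu}\to\Sigma_{\mc O}^\vee$ within a single group rather than the comparison between $\mc G$ and $\tilde{\mc G}$.

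Your treatment of the $q$-parameter identities once the cases are established, and of parts (b), (c), (e), is essentially in line with the paper.
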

\begin{proof}
(a) As both sides of \eqref{eq:2.3} are isomorphic to $\Z$, the definition of $h_\alpha^\vee$ implies
that the statement holds for some $N_\alpha \in \Q_{>0}$. Then $\eta (X_{\tilde \alpha}) =
X_\alpha^{1/N_\alpha}$, and this is a well-defined function on $X_\nr (M)$ because it equals
evaluation at $\eta (h_{\tilde \alpha}^\vee)$. We plug this into the equality of $\mu$-functions from
Proposition \ref{prop:2.2}, and we use the formula \eqref{eq:3.22} both for $M$ and for $\tilde M$. 
That yields an equality of two 
rational functions on $X_\nr (M)$, one built from $X_\alpha$ and one built from $X_\alpha^{1/N_\alpha}$.
The equality of the numerators of these two functions reads
\begin{multline}\label{eq:2.5}
c'_{s_\alpha} (1 - X_\alpha)(1 - X_\alpha^{-1}) \, \big[ (1 + X_\alpha) (1 + X_\alpha^{-1}) \big] = \\
\tilde {c}_\alpha c'_{s_{\tilde \alpha}} (1 - X_\alpha^{1/N_\alpha})(1 - X_\alpha^{-1/ N_\alpha}) \,
\big[(1 + X_\alpha^{1/N_\alpha}) (1 + X_\alpha^{-1/N_\alpha}) \big] .
\end{multline}
Here the term $(1 + X_\alpha) (1 + X_\alpha^{-1})$ must be omitted when $q_{\alpha *} = 1$. On the
other hand $q_\alpha > 1$ because $\alpha \in \Sigma_{\mc O,\mu}$, so the zeros at $X_\alpha = 1$
do not cancel against something in the denominator of \eqref{eq:3.22}. Analogous considerations 
apply to the second line of \eqref{eq:2.5}. Now we see that there are only three values of $N_\alpha$
for which \eqref{eq:2.5} is possible: $N_\alpha = 1$, $N_\alpha = 1/2$ (when the factor 
$(1 + X_\alpha^{1/N_\alpha}) (1 + X_\alpha^{-1/N_\alpha})$ is not there) and $N_\alpha = 2$
(when $(1 + X_\alpha) (1 + X_\alpha^{-1})$ is omitted).\\
(b) By \eqref{eq:2.1} $\eta$ induces an isomorphism between the respective adjoint groups. From
$G \to G_\ad \to \tilde{G}_\ad$ we get an action of $G$ on $\tilde G$, by ``conjugation". All the 
$\tilde M$-constituents of $\eta^* (\sigma)$ are associated (up to isomorphism) by elements of 
$M$. For $m \in M$, Ad$(m) : \tilde M \to \tilde M$ does not affect unramified characters
of $\tilde M$. It follows that any $\chi \in X_\nr (\tilde M)$ which stabilizes $\tilde \sigma$,
also stabilizes $\eta^* (\sigma)$. That implies
\[
\eta^{-1} \big( (M_\sigma^2 \cap M_\alpha^1 ) / M^1 \big) \subset 
(\tilde M_{\tilde \sigma}^2 \cap \tilde{M}_{\tilde \alpha}^1 ) / \tilde{M}^1 .
\]
That and the assumed bijectivity show that
\[
h_\alpha^\vee \in \eta (\tilde M_{\tilde \sigma}^2 \cap \tilde{M}_{\tilde \alpha}^1 ) / \tilde{M}^1 .
\]
By definition $h_{\tilde \alpha}^\vee$ generates $(\tilde M^2_{\tilde \sigma} \cap 
\tilde{M}_{\tilde \alpha}^1 ) / \tilde{M}^1$, so an integer multiple of its image under $\eta$ 
equals $h_\alpha^\vee$. By part (a) the multiplication factor is at most 2.\\
(c) If $\chi \in X_\nr (M,\sigma)$, then $\eta^* (\sigma) \otimes \eta^* (\chi) = \eta^* (\sigma
\otimes \chi)$ is isomorphic with $\eta^* (\sigma)$. Hence 
\[
\eta^* (X_\nr (M,\sigma)) \subset X_\nr (\tilde M, \eta^* (\sigma)) ,
\]
which implies that $\eta (\tilde{M}^2_{\eta^* (\sigma)}) \subset M_\sigma^2$. As $h_\alpha^\vee$
generates $(M_\sigma^2 \cap M_\alpha^1) / M^1$ and $\eta (h_{\tilde \alpha})$ lies in that group,
$\eta (h_{\tilde \alpha})$ is a multiple of $h_\alpha^\vee$. Combine that with part (a).\\
(d) In case $N_\alpha = 2$, $2 \eta (X_{\tilde \alpha}) = X_\alpha$. Then Proposition \ref{prop:2.2}
and \eqref{eq:3.22} entail $q_{\alpha} = 1$ and $q_{\tilde \alpha} = q_{\tilde \alpha*} = 
q_{\alpha}^{1/2}$. Notice that this is only possible when $\Sigma_{\tilde{\mc O},j}^\vee \cong C_n$.

When $N_\alpha = 1/2$, we have $\eta (X_{\tilde \alpha}) = 2 X_\alpha$. For the same reasons as 
above, $q_{\tilde \alpha} = 1$ and $q_\alpha = q_{\alpha*} =  q_{\tilde \alpha}^{1/2}$. By 
\cite[Lemma 3.3]{SolEnd} this is only possible if $\Sigma_{\mc O,j}^\vee$ has type $B_n$.\\
(e) Parts (d,ii) and (d,iii) just switch the second line 
(with $\lambda^* = 0$) and the third line of Table \ref{fig:1}.
\end{proof}

We remark that examples of case (ii) are easy to find, it already occurs for $SL_2 (F) \to PGL_2 (F)$
and the unramified principal series (as worked out in Paragraph \ref{par:split}).
For an instance of case (iii) see Example \ref{ex:A}.

We can apply Propositions \ref{prop:2.2} and \ref{prop:2.4} in particular with $\tilde{\mc G}$
equal to the simply connected cover $\mc G_\Sc$ of $\mc G_\der$, that yields:

\begin{cor}\label{cor:2.6}
Suppose that Conjecture \ref{conj:1} holds for $\tilde G = G_\Sc$ and 
$[\tilde M, \tilde \sigma]_{G_\Sc}$. Then it holds for $G$ and $[M,\sigma]_G$.
\end{cor}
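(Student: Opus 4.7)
The plan is to apply Propositions \ref{prop:2.2} and \ref{prop:2.4} with $\tilde{\mc G} = \mc G_\Sc$ and $\eta : \mc G_\Sc \to \mc G$ the composition of the simply connected cover $\mc G_\Sc \to \mc G_\der$ with the inclusion $\mc G_\der \hookrightarrow \mc G$. This $\eta$ satisfies the standing hypotheses of Section \ref{sec:red}: the differential is a central isogeny onto $\mr{Lie}(\mc G_\der)$, so its kernel is central, and the cokernel $\mc G / \mc G_\der$ is a torus, hence commutative. Given $[M,\sigma]_G$, set $\tilde M = \eta^{-1}(M)$ and let $\tilde \sigma$ be any irreducible constituent of $\eta^* (\sigma)$; this $\tilde \sigma$ exists by Lemma \ref{lem:2.1} and is supercuspidal. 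Put $\tilde{\mc O} = X_\nr (\tilde M) \tilde \sigma$. By Corollary \ref{cor:2.3}, the pullback $\eta^*$ induces a bijection $\Sigma_{\mc O,\mu} \to \Sigma_{\tilde{\mc O},\mu}$, which carries irreducible components of $\Sigma_{\mc O}^\vee$ to irreducible components of $\Sigma_{\tilde{\mc O}}^\vee$. I verify Conjecture \ref{conj:1} on $\Sigma_{\mc O}^\vee$ one component at a time.

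Fix a matched pair $(\Sigma_{\mc O,j}^\vee, \Sigma_{\tilde{\mc O},j}^\vee)$ and invoke Proposition \ref{prop:2.4}(d). In case (i), $N_\alpha = 1$ for every root, which via Proposition \ref{prop:2.4}(a) and the Harish-Chandra factorization \eqref{eq:3.22} forces $X_\alpha = X_{\tilde \alpha} \circ \eta^*$, and hence $q_\alpha = q_{\tilde \alpha}$ and $q_{\alpha*} = q_{\tilde \alpha *}$; both parts of Conjecture \ref{conj:1} transfer without change. In case (ii), $\Sigma_{\mc O,j}^\vee$ is of type $B_n$ and $\Sigma_{\tilde{\mc O},j}^\vee$ is of type $C_n$; the same reasoning yields equality of $q$-parameters on the long roots, while for the short roots Proposition \ref{prop:2.4}(d,ii) gives $q_\beta = q_{\beta*} = q_{\tilde \beta}^{1/2}$. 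Since Conjecture \ref{conj:1}(i) for $\Sigma_{\tilde{\mc O},j}^\vee$ forces $q_{\tilde \beta}$ to be an integer power of $q_F$ (long roots in $C_n$ admit no $q_F^{1/2}$ option in the conjecture), both $q_\beta$ and $q_{\beta*}$ are powers of $q_F^{1/2}$, and $q_\beta q_{\beta*}^{\pm 1}$ is a power of $q_F$, which is exactly the latitude Conjecture \ref{conj:1}(i) allows for short roots of $B_n$. Case (iii) is dual, with the types swapped: $q_\beta = q_{\tilde \beta}^2$, so squaring a power of $q_F^{1/2}$ produces a power of $q_F$, as required on the $C_n$ side. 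In all three cases Proposition \ref{prop:2.4}(e) certifies that the resulting $(\lambda, \lambda^*)$ still occurs in Table \ref{fig:1}, delivering Conjecture \ref{conj:1}(ii).

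There is no serious obstacle left to the corollary itself: the technical work has been loaded into Propositions \ref{prop:2.2} and \ref{prop:2.4}. The only conceptual point requiring care is tracking the three possibilities for $N_\alpha$ on a single component; once these are pinned to $\{1/2, 1, 2\}$ and their effect on $(q_\alpha, q_{\alpha*})$ is read off, the verification of Conjecture \ref{conj:1}(i) reduces to the arithmetic statement that taking a square root converts a power of $q_F$ into a power of $q_F^{1/2}$ and squaring reverses this, while Conjecture \ref{conj:1}(ii) is immediate from Proposition \ref{prop:2.4}(e).
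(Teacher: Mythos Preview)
Your proof is correct and follows exactly the route the paper intends: the paper simply says that Propositions \ref{prop:2.2} and \ref{prop:2.4} applied with $\tilde{\mc G} = \mc G_\Sc$ yield the corollary, and you have spelled out that deduction carefully, treating each case of Proposition \ref{prop:2.4}(d) and invoking part (e) for Conjecture \ref{conj:1}(ii). There is nothing to add.
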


Every simply connected $F$-group is a direct product of $F$-simple simply connected groups, say 
\[
\mc G_\Sc = \prod\nolimits_i \mc G_\Sc^{(i)} .
\]
Everything described in Section \ref{sec:progen} decomposes accordingly, for instance any
$\tilde \sigma \in \Irr_\cusp (G_\Sc )$ can be factorized as
\[
\tilde \sigma = \boxtimes_i \, \sigma^{(i)} \quad \text{with} \quad 
\sigma^{(i)} \in \Irr_\cusp (G_\Sc^{(i)}) .
\]
For every $F$-simple simply connected $F$-group $\mc G_\Sc^{(i)}$ there exists a finite separable
field extension $F' / F$ and an absolutely simple, simply connected $F'$-group $\mc G_\Sc^{'(i)}$,
such that $\mc G_\Sc^{(i)}$ is the restriction of scalars from $F'$ to $F$ of $\mc G_\Sc^{'(i)}$. Then
\[
G_\Sc^{(i)} = \mc G_\Sc^{(i)}(F) = \mc G_\Sc^{'(i)} (F') = G_\Sc^{'(i)} ,
\] 
so $\sigma^{(i)}$ can be regarded as a supercuspidal representation of $G_\Sc^{'(i)}$. Of course 
that last step does not change the parameters $q_\alpha$ and $q_{\alpha *}$ associated to $\sigma^{(i)}$.
On the other hand, that step does replace $q_F$ by $q_{F'}$ and changes the labels 
$\lambda (\alpha)$ and $\lambda^* (\alpha)$ by a factor $\log (q_F) / \log (q_{F'})$.
As this is the same scalar factor for all $\alpha \in \Sigma_{\tilde{\mc O}^{(i)},\mu}$, it is
innocent. With these steps we reduced the computation of the parameters $q_\alpha, q_{\alpha *}, 
\lambda (\alpha)$ and $\lambda^* (\alpha)$ to the case where $\mc G$ is absolutely
simple and simply connected. 
 
Sometime it is more convenient to study, instead of a simply connected simple group, a reductive
group with that as derived group. For instance, the groups $GL_n, U_n, \mr{GSpin}_n$ are often easier
than, respectively, $SL_n, SU_n, \mr{Spin}_n$. In such situations, the following result comes in handy.

\begin{prop}\label{prop:2.5} \textup{\cite[Propositions 2.2 and 2.7]{Tad}} \\
Suppose that $\tilde{\mc G}$ is a connected reductive $F$-subgroup of $\mc G$ that contains 
$\mc G_\der$. For every $\tilde \pi \in \Irr (\tilde G)$ there exists a $\pi \in \Irr (G)$ such that
$\Res^{\tilde G}_G (\pi)$ contains $\tilde \pi$. Moreover $\tilde \pi$ is supercuspidal if and only
if $\pi$ is supercuspidal.
\end{prop}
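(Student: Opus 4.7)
The approach is Clifford--Mackey theory applied to the normal subgroup $\tilde G \lhd G$. First, I would verify that $\tilde G$ is normal in $G$: for $g \in G$ and $\tilde h \in \tilde G$, the commutator $[g,\tilde h]$ lies in $G_\der \subseteq \tilde G$, so $g \tilde h g^{-1} \in \tilde G$. Moreover $G/\tilde G$ is abelian, being a quotient of $G/G_\der$.

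For the existence of $\pi$, I would form the smooth induced representation $\mr{ind}_{\tilde G}^G(\tilde \pi)$ and extract an irreducible subquotient $\pi$. Frobenius reciprocity then supplies a nonzero $\tilde G$-homomorphism from $\tilde \pi$ into $\Res^G_{\tilde G}(\pi)$. Applying Lemma \ref{lem:2.1} to the inclusion $\eta : \tilde{\mc G} \hookrightarrow \mc G$ (whose derived differential has central kernel and whose cokernel is commutative by hypothesis) shows that $\Res^G_{\tilde G}(\pi)$ is semisimple of finite length; the nonzero homomorphism above is therefore injective, and $\tilde \pi$ appears as a direct summand. The technical point is producing an irreducible subquotient of $\mr{ind}_{\tilde G}^G(\tilde \pi)$ in the smooth category, which relies on admissibility modulo the centre, which is available because $G / (\tilde G \, Z(G))$ is controlled by the commutative group $\mc G/\tilde{\mc G}$.

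For the supercuspidality equivalence, I would argue via Jacquet modules. Every parabolic subgroup $P = MN$ of $G$ satisfies $N \subseteq G_\der \subseteq \tilde G$, so $\tilde P := P \cap \tilde G = \tilde M N$ is a parabolic of $\tilde G$ with Levi $\tilde M = M \cap \tilde G$ and the same unipotent radical. Consequently the normalized Jacquet functors satisfy $\Res^M_{\tilde M} \circ r^G_M \cong r^{\tilde G}_{\tilde M} \circ \Res^G_{\tilde G}$ up to a character twist that does not affect vanishing. Since $\Res^M_{\tilde M}$ is faithful on smooth representations, $r^G_M(\pi) = 0$ if and only if $r^{\tilde G}_{\tilde M}(\Res^G_{\tilde G}(\pi)) = 0$. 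By Clifford theory combined with Lemma \ref{lem:2.1}, $\Res^G_{\tilde G}(\pi)$ is a finite sum of $G$-conjugates of $\tilde \pi$, all of which are supercuspidal precisely when $\tilde \pi$ is (conjugation by $G$ preserves this property). Hence $\pi$ is supercuspidal if and only if $\tilde \pi$ is.

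The main obstacle is the Clifford--Mackey bookkeeping in the existence step, most notably producing an irreducible subquotient of the smooth but not necessarily admissible induction $\mr{ind}_{\tilde G}^G(\tilde \pi)$. These technicalities are worked out carefully in \cite{Tad}, to which the proposition is attributed.
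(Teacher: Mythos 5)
The paper does not prove this proposition; it only cites \cite[Propositions 2.2 and 2.7]{Tad}, so the comparison is with Tadi\'c's argument rather than with anything in the text. Your treatment of the supercuspidality equivalence is correct and essentially the standard one: corresponding parabolics of $G$ and $\tilde G$ have the same unipotent radical, so Jacquet functors commute with restriction, restriction to $\tilde M$ is faithful, and Lemma \ref{lem:2.1} plus Clifford theory for the normal subgroup $\tilde G$ finishes the argument.

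The existence step, however, has a genuine gap as written. The subgroup $\tilde G$ is closed but not open in $G$ (already for $SL_n(F) \subset GL_n(F)$ the quotient is $F^\times$), so compact induction $\mr{ind}_{\tilde G}^G$ is \emph{not} left adjoint to restriction, and smooth induction is only right adjoint; in neither case does a nonzero map $\tilde \pi \to \Res^G_{\tilde G}(\pi)$ follow from $\pi$ being a mere \emph{subquotient} of the induced representation, since Frobenius reciprocity pairs $\tilde \pi$ with irreducible quotients (respectively subrepresentations), not with subquotients. Moreover $\mr{ind}_{\tilde G}^G(\tilde \pi)$ is far from admissible, even after fixing a central character. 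The repair, which is what Tadi\'c (following Gelbart--Knapp) actually does, is to first extend the central character of $\tilde \pi$ from $Z(G) \cap \tilde G$ to $Z(G)$, thereby extending $\tilde \pi$ to the subgroup $Z(G) \tilde G$; this subgroup is open, normal and of \emph{finite index} in $G$, because it contains $Z(\mc G)^\circ (F)\, \mc G_\der (F)$, whose index is controlled by a finite Galois cohomology group. Induction from an open finite-index subgroup preserves admissibility and finite length, so an irreducible quotient $\pi$ exists, and for open subgroups the left-adjoint form of Frobenius reciprocity applies and yields $\tilde \pi \hookrightarrow \Res^G_{\tilde G}(\pi)$. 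So the difficulty is not, as you suggest, merely producing an irreducible subquotient of $\mr{ind}_{\tilde G}^G(\tilde \pi)$: even granted one, the argument as structured would not close.
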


We note that in this setting the inclusion $\imath : \tilde{\mc G} \to \mc G$ satisfies the conditions
stated at the start of the paragraph. 

\begin{cor}\label{cor:2.7}
Let $\mc G, \tilde{\mc G}$ be as in Proposition \ref{prop:2.5}. Then Conjecture \ref{conj:1} holds
for $G$ if and only if it holds for $\tilde G$.
\end{cor}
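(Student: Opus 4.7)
The inclusion $\imath : \tilde{\mc G} \hookrightarrow \mc G$ fits the framework from the start of Section \ref{sec:red}: its differential has trivial (hence central) kernel, and $\mc G / \tilde{\mc G}$ is commutative by hypothesis. Since $\mc G_\der \subset \tilde{\mc G} \subset \mc G$ forces $\tilde{\mc G}_\der = \mc G_\der$, the map $\imath_\der$ is an isomorphism, so $\imath$ induces bijections between the Levi subgroups of $\tilde G$ and $G$ and between their root systems. Thus Propositions \ref{prop:2.2} and \ref{prop:2.4} apply with $\eta = \imath$.

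My plan is to pair up Bernstein blocks of $G$ and $\tilde G$ and transfer Conjecture \ref{conj:1} across the correspondence. Given a block $[M,\sigma]_G$ on the $G$-side, I would invoke Lemma \ref{lem:2.1} to pick a supercuspidal constituent $\tilde \sigma$ of $\imath^* \sigma$ in $\Irr_\cusp (\tilde M)$; cuspidality is preserved because $\imath$ has commutative cokernel. Conversely, given a block $[\tilde M, \tilde \sigma]_{\tilde G}$ on the $\tilde G$-side, I would use Proposition \ref{prop:2.5} to produce a supercuspidal $\sigma \in \Irr_\cusp (M)$ whose restriction to $\tilde M$ contains $\tilde \sigma$. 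Either procedure yields a linked pair $(\sigma, \tilde \sigma)$, and every Bernstein block on one side arises as the starting point of such a construction.

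For a linked pair, Proposition \ref{prop:2.2} combined with Corollary \ref{cor:2.3} identifies $\Sigma_{\mc O,\mu}$ with $\Sigma_{\tilde{\mc O},\mu}$ via $\alpha \mapsto \tilde \alpha$. Proposition \ref{prop:2.4} then describes, on each irreducible component, how the $q$-parameters transform: either the labels agree outright, or they undergo the $B_n \leftrightarrow C_n$ flip of cases (d,ii) and (d,iii). The decisive input is part (e) of that proposition, which asserts that these modifications permute the allowed rows of Table \ref{fig:1} among themselves. Consequently, the labels on the $G$-side match Table \ref{fig:1} exactly when those on the $\tilde G$-side do, which gives the desired equivalence of Conjecture \ref{conj:1} for $G$ and $\tilde G$.

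The principal thing to watch is whether the parameter relabelling of Proposition \ref{prop:2.4}(d) might take one outside Table \ref{fig:1}; but part (e) was designed precisely to exclude this, so I do not anticipate a serious obstacle. A minor bookkeeping point is to verify surjectivity of the block correspondence in both directions, which is immediate from Lemma \ref{lem:2.1} and Proposition \ref{prop:2.5}, respectively.
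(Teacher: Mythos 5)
Your proposal is correct and follows essentially the same route as the paper: pair the blocks via Proposition \ref{prop:2.5} in one direction and via a constituent of $\imath^*(\sigma)$ in the other, then transfer the root systems and parameters with Propositions \ref{prop:2.2} and \ref{prop:2.4}, using part (e) to stay within Table \ref{fig:1}. The only cosmetic difference is that the paper notes that in this situation only cases (i) and (iii) of Proposition \ref{prop:2.4}.d can occur, whereas you also allow case (ii); since part (e) covers that case as well, this does not affect the argument.
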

\begin{proof}
Let $M$ be a Levi subgroup of $G$ and let $\tilde \pi \in \Irr_\cusp ( M \cap \tilde G)$. 
An appropriate $\pi$ is obtained from Proposition \ref{prop:2.5} applied to 
$\imath : \mc M \cap \tilde{\mc G} \to \mc M$. Then $\tilde \pi$ is a constituent of $\imath^* (\pi)$.
This also works the other way round: if we start with $\pi \in \Irr_\cusp (M)$ we can choose as
$\tilde \pi$ any constituent of $\imath^* (\pi)$.
Now we can apply Proposition \ref{prop:2.4}, which says that the Hecke algebras 
$\mc H (X_\nr (M \cap \tilde G) \tilde \pi, \tilde G)$ and $\mc H (X_\nr (M) \pi, G)$ have root 
systems and parameters related as in cases (i) or (iii) of Proposition \ref{prop:2.4}.d.
\end{proof}

\section{Reduction to characteristic zero}
\label{sec:char0}

For several classes of reductive groups, stronger results are available over $p$-adic fields than
over local function fields. With the method of close local fields \cite{Kaz,Gan2}, we will show 
that all relevant results about affine Hecke algebras associated to Bernstein components can be 
transferred from characteristic zero to positive characteristic.

We start with an arbitrary local field of characteristic $p$. Choose a $p$-adic field $\tilde F$
which is $\ell$-close to $F$, that is
\begin{equation}\label{eq:4.1}
\mf o_F / \varpi_F^\ell \mf o_F \cong \mf o_{\tilde F} / \varpi_{\tilde F}^\ell \mf o_{\tilde F}
\qquad \text{as rings.}
\end{equation}
As remarked in \cite{Del}, such a field $\tilde F$ exists for every given $\ell \in \Z_{>0}$.
If \eqref{eq:4.1} holds, then it is also valid for every $m < \ell$, and in particular the
residue fields $\mf o_F / \varpi_F \mf o_F$ and $\mf o_{\tilde F} / \varpi_{\tilde F} \mf o_{\tilde F}$
are isomorphic. We note that
\begin{equation}\label{eq:4.2}
F^\times / (1 + \varpi_F^\ell \mf o_F) \cong \Z \times \mf o_F^\times /  (1 + \varpi_F^\ell \mf o_F)
\cong \Z \times \mf o_{\tilde F}^\times /  (1 + \varpi_{\tilde F}^\ell \mf o_{\tilde F}) =
{\tilde F}^\times / (1 + \varpi_{\tilde F}^\ell \mf o_{\tilde F}) .
\end{equation}
Let $\mb I_F^\ell$ be the $\ell$-th ramification subgroup of Gal$(F_s / F)$.
By \cite[(3.5.1)]{Del} there is a group isomorphism (unique up to conjugation)
\begin{equation}\label{eq:4.3}
\mr{Gal}(F_s / F) / \mb I_F^\ell \cong \mr{Gal} (\tilde{F}_s / \tilde F) / \mb I_{\tilde F}^\ell ,
\end{equation}
and similarly with Weil groups. According to \cite[Proposition 3.6.1]{Del}, for $m < \ell$ this 
isomorphism is compatible with the Artin reciprocity map
\[
\mb W_F / \mb I_F^\ell \to F^\times / (1 + \varpi_F^m \mf o_F) .
\]
Let $\mc G$ be a connected reductive $F$-group. We want to exhibit ``the same'' group over a $p$-adic 
field. The quasi-split inner form $\mc G^*$ of $\mc G$ is determined by the action of Gal$(F_s / F)$
on the based absolute root datum of $\mc G$. That action factors through a finite quotient of
Gal$(F_s/F)$, so there exists a $\ell \in \Z_{>0}$ such that $\mb I_F^\ell$ acts trivially. The group
$\mc G$ is an inner twist of $\mc G^*$, and the inner twists of $\mc G^*$ are parametrized naturally by
\begin{equation}\label{eq:4.4}
H^1 (F, \mc G^*_\ad) \cong \Irr \big( Z (G^{*\vee}_\Sc)^{\mb W_F} \big) .
\end{equation}
Now we pick a $p$-adic field $\tilde F$ which is $\ell$-close to $F$, and we define $\tilde{\mc G}^*$
to be the quasi-split $\tilde F$-group with the same based root absolute root datum as $\mc G^*$
and Galois action transferred from that of $\mc G^*$ via \eqref{eq:4.3}. Then $\mc G^*$ and 
$\tilde{\mc G}^*$ have the same Langlands dual group (in a form where $\mb I_F^\ell$ has been divided
out)  and hence
\begin{equation}\label{eq:4.5}
Z (\tilde{G}^{*\vee}_\Sc)^{\mb W_{\tilde F}} \cong Z (G^{*\vee}_\Sc)^{\mb W_F} .
\end{equation}
We define $\tilde{\mc G}$ to be the inner twist of $\tilde{\mc G}^*$ parametrized by the character
of $Z (\tilde{G}^{*\vee}_\Sc)^{\mb W_{\tilde F}}$ that is transformed by \eqref{eq:4.5} into the
character of $Z (G^{*\vee}_\Sc)^{\mb W_F}$ that parametrizes $\mc G$.

The following descriptions are based on recent work of Ganapathy \cite{Gan1,Gan2}. It applies when 
$F$ and $\tilde F$ are $\ell$-close with $\ell$ large enough. The relation between $\mc G$ 
and $\tilde{\mc G}$ is the same as in these papers, although over there it is reached in a slightly 
different way, without \eqref{eq:4.4}. Let $\mc T \subset \mc G$ be the maximal $F$-torus from which
the root datum is built, and let $\mc S \subset \mc T$ be the maximal $F$-split subtorus. In the 
Bruhat--Tits building $\mc B (\mc G,F)$, $S = \mc S (F)$ determines an apartment $\mh A_S$. 

The same constructions can be performed for $\tilde{\mc G}$. Then $X_* (\mc S) \cong X_* (\tilde{\mc S})$
extends to an isomorphism of polysimplicial complexes $\mh A_S \cong \mh A_{\tilde S}$. For every
special vertex $x \in \mh A_S$, we get a special vertex $\tilde x \in \mh A_{\tilde S}$.
For $m \in \Z_{\geq 0}$, there is a refined version of the Moy--Prasad group $G_{x,m}$, see 
\cite{Gan1}. It is a compact open normal subgroup of $G_x$, the $G$-stabilizer of $x$. More precisely,
there is an $\mf o_F$-group scheme $\mc G_x$ (a slightly improved version of the parahoric group schemes 
constructed in \cite{BrTi2}), such that
\[
G_{x,m} = \mc G_x (\varpi_F^m \mf o_F ) \qquad \forall m \in \Z_{\geq 0}.
\]
By construction \cite[\S 2.D.3]{Gan1}, $G_{x,m}$ is totally decomposed in the sense of \cite[\S 1]{Bus}.
This means that, for any ordering of the root system $\Sigma (G,S)$, the product map
\[
(G_{x,m} \cap Z_G (S)) \times \prod\nolimits_{\alpha \in \Sigma (G,S)} (G_{x,m} \cap U_\alpha) 
\longrightarrow G_{x,m}
\]
is a bijection. Here $U_\alpha$ is the root subgroup of $G$ with respect to $\alpha \in \Sigma (G,S)$
(to be distinguished from the earlier $U_\alpha$ when $M$ is not a minimal Levi subgroup of $G$).

All the above applies to $\tilde{\mc G}$ as well. 
The following results generalize \cite{Kaz} to non-split groups.

\begin{thm}\label{thm:4.1} \textup{\cite[Corollary 6.3]{Gan1}} \\
Fix $m \in \Z_{> 0}$ and let $\ell \in \Z_{>0}$ be large enough. The isomorphisms \eqref{eq:4.1}
induce an isomorphism of group schemes
\[
\mc G_x \times_{\mf o_F} \mf o_F / \varpi_F^m \mf o_F \cong 
\mc G_{\tilde x} \times_{\mf o_{\tilde F}} \mf o_{\tilde F} / \varpi_{\tilde F}^m \mf o_{\tilde F}
\]
and group isomorphisms
\[
G_{x,0} / G_{x,m} = \mc G_x ( \mf o_F / \varpi_F^m \mf o_F ) \cong \mc G_{\tilde x} (\mf o_{\tilde F} 
/ \varpi_{\tilde F}^m \mf o_{\tilde F}) = \tilde G_{\tilde x,0} / \tilde G_{\tilde x,m} .
\]
\end{thm}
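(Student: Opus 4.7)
The plan is to reduce the statement to an essentially combinatorial identification of the data defining the Bruhat--Tits group scheme, which can then be transferred across the isomorphism \eqref{eq:4.1}. I would build this in three stages: first handle split groups, then quasi-split inner forms via Galois descent, and finally arbitrary inner forms via the parametrization \eqref{eq:4.4}.

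First I would treat the split case. Suppose for now that $\mc G$ is split and choose a Chevalley $\Z$-form; the parahoric $\mc G_x$ is then described, as in Bruhat--Tits, by a set of integers $\{ f_\alpha (x) \}_{\alpha \in \Sigma (G,T)}$ (the valuations of the root subgroups' filtration jumps at $x$) together with the structure constants of the Chevalley basis. The coordinate ring of $\mc G_x \times_{\mf o_F} \mf o_F / \varpi_F^m \mf o_F$ is a quotient of a polynomial ring over $\mf o_F / \varpi_F^m \mf o_F$ by relations whose coefficients lie in $\mf o_F$ and are determined entirely by the combinatorial datum above. Hence for $\ell \geq m$ the isomorphism \eqref{eq:4.1} induces a canonical isomorphism of the two truncated group schemes, and also of the group of points $\mc G_x (\mf o_F / \varpi_F^m \mf o_F)$ by functoriality. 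At this step I would cite the ``totally decomposed'' product decomposition recalled just before the theorem, which lets one move between the group scheme and its points cleanly.

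Next, for the quasi-split form $\mc G^*$ I would use that the descent datum from the split form over the splitting field $E$ is encoded by the action of $\mr{Gal}(E/F)$ on the based root datum. Since this action factors through a finite quotient of $\mr{Gal}(F_s/F)$, one can choose $\ell$ so large that $\mb I_F^\ell$ acts trivially, and then by \eqref{eq:4.3} the same action transfers to $\mr{Gal}(\tilde F_s/\tilde F)/\mb I_{\tilde F}^\ell$, defining $\tilde{\mc G}^*$. One must further check that the apartment $\mh A_S$ and the special vertex $x$, being defined over the maximal unramified subextension, transfer across \eqref{eq:4.3}; this is precisely where the identification $\mh A_S \cong \mh A_{\tilde S}$ recalled before the theorem is used. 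Taking Galois-fixed points of the split parahoric over $\mf o_E / \varpi_E^m \mf o_E$ then yields the quasi-split parahoric over $\mf o_F / \varpi_F^m \mf o_F$, and by the previous paragraph this procedure is compatible with the close-fields identification.

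Finally, to pass from $\mc G^*$ to the inner form $\mc G$, I would appeal to the cocycle parametrization \eqref{eq:4.4}: choosing a representative cocycle in $\mc G^*_\ad (F_s)$ that factors through $\mr{Gal}(F_s/F)/\mb I_F^\ell$, one transfers it via \eqref{eq:4.3} and \eqref{eq:4.5} to a cocycle on the $\tilde F$-side defining $\tilde{\mc G}$. The inner-twisted parahoric group scheme is obtained by twisting the quasi-split one by the restriction of this cocycle to the stabilizer of $x$, and this twisting operation is again functorial in the ring datum $\mf o_F / \varpi_F^m \mf o_F$. The main obstacle, and the reason for the hypothesis that $\ell$ is ``large enough'', is bookkeeping: one must simultaneously absorb (i) the ramification of the splitting field of $\mc G^*$, (ii) the ramification of the cocycle producing $\mc G$ from $\mc G^*$, (iii) the Moy--Prasad depth $m$, and (iv) enough ramification so that \eqref{eq:4.3} is available at the required level; taking $\ell$ larger than the maximum of the associated conductors suffices, and one then obtains the displayed isomorphism of group schemes, with the isomorphism of finite groups following by evaluation on $\mf o_F / \varpi_F^m \mf o_F$ and $\mf o_{\tilde F} / \varpi_{\tilde F}^m \mf o_{\tilde F}$ respectively.
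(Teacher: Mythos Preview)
The paper does not prove this theorem at all: it is stated with the attribution \textup{\cite[Corollary 6.3]{Gan1}} and used as a black box. So there is no ``paper's own proof'' to compare against; your task here was only to cite Ganapathy, not to reproduce her argument.

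That said, your three-stage outline (split $\to$ quasi-split via Galois descent $\to$ inner forms via cocycle twisting) is in the right spirit and broadly matches the architecture of \cite{Gan1}. A few points where your sketch is thinner than what is actually needed: in the quasi-split step you implicitly assume that if $F$ and $\tilde F$ are $\ell$-close then their relevant finite extensions $E$ and $\tilde E$ are also close at the required level, which is a separate (though known, via Deligne) fact that must be invoked and quantified; in the inner-form step, twisting the parahoric group scheme by a cocycle valued in $\mc G^*_\ad (F_s)$ is more delicate than ``functorial in the ring datum'' suggests, since one must verify that the twisted integral model is still smooth and agrees with the Bruhat--Tits construction for $\mc G$ --- this is a substantial part of \cite{Gan1}. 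Your bookkeeping list (i)--(iv) is the right set of constraints on $\ell$, but turning it into an actual bound requires the explicit presentations of parahoric schemes developed there. As a summary of the strategy your proposal is fine; as a proof it would need those gaps filled, which is precisely why the paper defers to the reference.
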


We endow $G$ with the Haar measure that gives the parahoric subgroup $G_{x,0}$ volume 1. 
The vector space $C_c (G_{x,m} \backslash G / G_{x,m} )$ with the convolution product is an 
associative algebra, denoted $\mc H (G,G_{x,m})$. 

\begin{thm}\label{thm:4.2} \textup{\cite[Theorem 4.1]{Gan2}} \\
Fix $m \in \Z_{>0}$ and let $\ell \in \Z_{>0}$ be large enough. The isomorphisms from Theorem 
\ref{thm:4.1} and the Cartan decomposition give rise to a bijection
\[
\zeta_m : G_{x,m} \backslash G / G_{x,m} \to 
\tilde G_{\tilde x,m} \backslash \tilde G / \tilde G_{\tilde x,m}.
\]
This map extends to an algebra isomorphism
\[
\zeta_m^G : \mc H (G,G_{x,m}) \to \mc H (\tilde G, \tilde G_{\tilde x,m}) .
\]
\end{thm}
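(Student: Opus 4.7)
The plan is to describe $G_{x,m}\backslash G/G_{x,m}$ explicitly via a Cartan decomposition, define $\zeta_m^G$ as the $\C$-linear extension of $\zeta_m$ on characteristic functions of double cosets, and verify multiplicativity by matching convolution structure constants through Theorem \ref{thm:4.1} and the Artin reciprocity compatibility \eqref{eq:4.2}--\eqref{eq:4.3}.

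First I would write the Cartan decomposition
\[
G = \bigsqcup_{t \in X_* (\mc S)^+} G_{x,0}\, t(\varpi_F)\, G_{x,0}
\]
and refine modulo $G_{x,m}$ by using the finite group $G_{x,0}/G_{x,m}$. Exactly the analogous decomposition holds for $\tilde G$, with $X_*(\tilde{\mc S}) = X_*(\mc S)$ because $\mc G$ and $\tilde{\mc G}$ share absolute root data. Theorem \ref{thm:4.1} provides a canonical isomorphism $G_{x,0}/G_{x,m} \cong \tilde G_{\tilde x,0}/\tilde G_{\tilde x,m}$, and sending $t(\varpi_F)$ to $t(\varpi_{\tilde F})$ is well defined because, via \eqref{eq:4.2}, the cocharacter image of $F^\times$ modulo a sufficiently deep congruence subgroup matches that of $\tilde F^\times$. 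Assembling these data yields the bijection $\zeta_m$ of double coset spaces. Declaring $\zeta_m^G(1_{G_{x,m}gG_{x,m}}) = 1_{\zeta_m(G_{x,m}gG_{x,m})}$ and extending linearly gives a $\C$-linear isomorphism $\mc H(G,G_{x,m}) \to \mc H(\tilde G,\tilde G_{\tilde x,m})$; because $\mr{vol}(G_{x,0}) = \mr{vol}(\tilde G_{\tilde x,0}) = 1$ and $[G_{x,0}:G_{x,m}] = [\tilde G_{\tilde x,0}:\tilde G_{\tilde x,m}]$, the volumes of double cosets also correspond, so the unit and idempotent of the Hecke algebra are preserved.

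The heart of the argument is multiplicativity. The convolution product expands as
\[
1_{G_{x,m}g_1 G_{x,m}} * 1_{G_{x,m}g_2 G_{x,m}} = \sum_{g} c(g_1,g_2;g)\, 1_{G_{x,m}g G_{x,m}},
\]
with nonnegative integer structure constants $c(g_1,g_2;g)$ given by counting right $G_{x,m}$-cosets in an intersection of two double cosets. For fixed $g_1, g_2$ only finitely many $g$ contribute, and all of them lie in a bounded union of Cartan cells $G_{x,0}\, t(\varpi_F)\, G_{x,0}$ for $t$ in a finite subset of $X_*(\mc S)^+$ determined by $g_1$ and $g_2$. Choosing $\ell$ (hence also the auxiliary $m'$ in Theorem \ref{thm:4.1}) large enough that this entire finite region is controlled by the truncated group scheme $\mc G_x \times_{\mf o_F} \mf o_F/\varpi_F^{m'}\mf o_F$, and using the totally-decomposed product form of $G_{x,m}$ recalled before the theorem to translate intersections of double cosets into combinatorial data on $G_{x,0}/G_{x,m}$ together with cocharacter labels, Theorem \ref{thm:4.1} and \eqref{eq:4.2} identify the counts computing $c(g_1,g_2;g)$ on the $F$-side with those on the $\tilde F$-side.

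The main obstacle I anticipate is the uniformity of the choice of $\ell$: one needs an a priori bound, in terms of the Cartan coordinates of $g_1, g_2$, on the size of the cocharacters appearing in the support of the product, so that a single $\ell$ suffices to control all the intersection counts that enter. This is handled by observing that multiplication in $G$ shifts the Cartan coordinate by an amount bounded in terms of the two factors (since dominant cocharacters form a finitely generated monoid acted on by the finite Weyl group), so only finitely many $t \in X_*(\mc S)^+$ can contribute to the support of the convolution. Once this bound is in place, the passage from $F$ to $\tilde F$ is purely group-scheme-theoretic and reduces entirely to the finite-level isomorphism of Theorem \ref{thm:4.1} together with \eqref{eq:4.2}--\eqref{eq:4.3} to identify the cocharacter evaluations $t(\varpi_F) \leftrightarrow t(\varpi_{\tilde F})$.
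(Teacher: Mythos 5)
The paper does not prove this statement itself: it is quoted verbatim from Ganapathy \cite[Theorem 4.1]{Gan2}, which in turn generalizes Kazhdan's argument \cite{Kaz} from split to general connected reductive groups. So the comparison has to be with that proof, and against it your sketch has a genuine gap at exactly the point you flag as "the main obstacle". Your proposed resolution --- that the support of $1_{G_{x,m}g_1G_{x,m}} * 1_{G_{x,m}g_2G_{x,m}}$ lies in finitely many Cartan cells bounded in terms of the Cartan coordinates of $g_1,g_2$ --- is true but does not deliver a single $\ell$ that works for the whole algebra. The bound on the support grows with the Cartan coordinates of $g_1$ and $g_2$, which are unbounded as the double cosets range over all of $G_{x,m}\backslash G/G_{x,m}$; moreover the structure constants $c(g_1,g_2;g)$ for a deep double coset (say $g_1 = \lambda(\varpi_F)$ with $\lambda$ large) involve the intersections $G_{x,m}\cap g_1 G_{x,m}g_1^{-1}$, whose description requires knowing $\mf o_F$ modulo a power of $\varpi_F$ that grows with $\lambda$. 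Hence for any fixed $\ell$ the finite-level isomorphisms of Theorem \ref{thm:4.1} only match the structure constants on the finite-dimensional subspace spanned by double cosets with Cartan coordinate below a bound $N(\ell)$, and your argument as written would force $\ell$ to depend on $g_1,g_2$, i.e.\ to be infinite.

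The missing idea --- the actual heart of Kazhdan's and Ganapathy's proofs --- is a finite presentation of $\mc H(G,G_{x,m})$: the algebra is generated by the elements supported on double cosets with Cartan coordinate in a fixed finite generating set of the dominant cocharacter monoid (depending only on $\mc G$ and $m$), and the two-sided ideal of relations is generated by relations of bounded degree. Matching the finitely many structure constants that enter these generators and relations therefore pins down the entire (infinite-dimensional) algebra, and this is what allows one single $\ell$ to suffice. Your linear map $\zeta_m^G$ and the verification that it respects units, volumes and the low-depth structure constants are all fine; what is needed on top is the reduction of multiplicativity everywhere to multiplicativity on a bounded region via such a presentation. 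Without that step the argument does not close.
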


In particular $\zeta_m$ induces a group isomorphism $G / G^1 \to \tilde G / \tilde{G}^1$, and hence
a group isomorphism
\begin{equation}\label{eq:4.8}
\overline{\zeta_m^G} : X_\nr (G) = \Irr (G/G^1) \to \Irr (\tilde G / \tilde{G}^1) = \Irr (\tilde G) .
\end{equation}
Let $\Rep (G,G_{x,m})$ be the category of smooth $G$-representations that are generated by their
$G_{x,m}$-fixed vectors. Recall that $G_{x,m}$ is a totally decomposed open normal subgroup of the
good maximal compact subgroup $G_x$ of $G$. From \cite[\S 3.7--3.9]{BeDe} we know that there is 
an equivalence of categories
\begin{equation}\label{eq:4.6}
\begin{array}{ccc}
\Rep (G,G_{x,m}) & \longrightarrow & \Mod (\mc H (G,G_{x,m})) \\
V & \mapsto & V^{G_{x,m}}
\end{array}.
\end{equation}
From \eqref{eq:4.6} and Theorem \ref{thm:4.2} one obtains equivalences of categories
\begin{equation}\label{eq:4.7}
\begin{array}{lclll}
(\zeta_m^G )_* & : & \Mod (\mc H (G,G_{x,m})) & \longrightarrow & 
\Mod (\mc H (\tilde G,\tilde G_{\tilde x,m})) \\
\overline{\zeta_m^G} & : & \Rep (G,G_{x,m}) & \longrightarrow & \Rep (\tilde G, \tilde G_{\tilde x,m})
\end{array},
\end{equation}
which constitute the core of the method of close local fields.
We will need many properties of these equivalences, starting with two easy ones about characters.

\begin{lem}\label{lem:4.6}
\enuma{
\item Via \eqref{eq:4.8}, the equivalence of categories $\overline{\zeta_m^G}$ 
preserves twists by unramified characters.
\item $\zeta_m$ induces an isomorphism
\[
A_G / (A_G \cap G_{x,m}) \longrightarrow A_{\tilde G} / (A_{\tilde G} \cap \tilde G_{\tilde x,m}) .
\]
The effect of $\overline{\zeta_m^G}$ on $A_G$-characters of $G$-representations is push-forward
along this isomorphism.
}
\end{lem}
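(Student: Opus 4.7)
The plan is to deduce both parts from the basic fact, made explicit just before the lemma, that $\zeta_m$ induces a group isomorphism $G/G^1 \cong \tilde G/\tilde G^1$, and that by construction $\zeta_m^G$ is ``graded'' over this isomorphism: if $f \in \mc H (G, G_{x,m})$ is supported on a single double coset lying over $g G^1 \in G/G^1$, then $\zeta_m^G (f)$ is supported on the double coset lying over the image of $g G^1$ in $\tilde G/\tilde G^1$. This gradedness is built into the construction in Theorem \ref{thm:4.2}, which starts from the Cartan decomposition $G = G_{x,0} \Lambda G_{x,0}$; the piece $\Lambda$ surjects onto $G/G^1$, and $\zeta_m$ is defined so as to match these Cartan pieces (up to the two-sided action of the parahoric).

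For (a), note that any $\chi \in X_\nr (G) = \Irr (G/G^1)$ is trivial on $G^1 \supseteq G_{x,0} \supseteq G_{x,m}$. Hence $(\chi \cdot f)(g) := \chi (g) f(g)$ is constant on the left and right cosets of $G_{x,m}$, so $f \mapsto \chi \cdot f$ is a well-defined $\C$-linear endomorphism of $\mc H (G, G_{x,m})$; multiplicativity of $\chi$ makes it an algebra automorphism. Under \eqref{eq:4.6} this automorphism intertwines the twist $V \mapsto V \otimes \chi$: we have $(V \otimes \chi)^{G_{x,m}} = V^{G_{x,m}}$ as vector spaces (since $\chi|_{G_{x,m}} = 1$), and a direct integral computation shows the resulting $\mc H (G,G_{x,m})$-action is via $\chi \cdot f$. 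Writing $\tilde \chi = \overline{\zeta_m^G}(\chi)$, the gradedness of $\zeta_m^G$ gives $\zeta_m^G (\chi \cdot f) = \tilde \chi \cdot \zeta_m^G (f)$, so transporting through \eqref{eq:4.7} shows that $\overline{\zeta_m^G}$ sends $V \otimes \chi$ to $\overline{\zeta_m^G}(V) \otimes \tilde \chi$.

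For (b), choose a basis of $X_* (\mc A_G)$, which serves simultaneously as a basis of $X_* (\mc A_{\tilde G})$ because $\mc G$ and $\tilde G$ share the same (absolute) root datum by construction. This identifies $A_G \cong (F^\times)^r$ and $A_{\tilde G} \cong (\tilde F^\times)^r$. Since $\mc A_G \subset Z(\mc G)$ acts trivially on the apartment $\mh A_S$, we have $A_G \subset G_x$, and inspection of the Moy--Prasad filtration shows that $A_G \cap G_{x,m}$ corresponds to $(1 + \varpi_F^m \mf o_F)^r$ (the filtration on the centre is just the $m$-th congruence subgroup in these coordinates). Applying \eqref{eq:4.2} coordinate-wise gives the claimed isomorphism, and by construction $\zeta_m$ restricted to double cosets represented by $A_G$ realises this very isomorphism. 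Now $A_G \subset Z(G)$, so for each $a \in A_G$ the indicator $\mb{1}_{a G_{x,m}}$ lies in $\mc H (G, G_{x,m})$; these indicators span a commutative subalgebra isomorphic to $\C [A_G /(A_G \cap G_{x,m})]$ whose action on $V^{G_{x,m}}$ under \eqref{eq:4.6} is the restriction of the $A_G$-action on $V$. Applying $\zeta_m^G$ and using the identification just established yields the push-forward statement.

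The main obstacle is not a deep one but a bookkeeping one: the gradedness of $\zeta_m^G$ over $G/G^1 \cong \tilde G/\tilde G^1$ (used in both parts) must be read off from the explicit construction in \cite{Gan2}, together with the confirmation that central split elements of $A_G$ sit in the Cartan-type decomposition in exactly the expected way. Once these compatibilities are unwound, both parts of the lemma are essentially formal.
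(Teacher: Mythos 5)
Your argument is correct and follows essentially the same route as the paper: part (a) is exactly the observation the paper dismisses as ``clear from \eqref{eq:4.8}'' (the gradedness of $\zeta_m^G$ over $G/G^1 \cong \tilde G/\tilde G^1$ intertwining the automorphisms $f \mapsto \chi\cdot f$), and part (b) is the paper's identification $A_G/(A_G \cap G_{x,m}) \cong X_*(\mc A_G)\otimes_\Z (F^\times/(1+\varpi_F^m\mf o_F))$ from \eqref{eq:4.9} combined with \eqref{eq:4.2} and the fact that $\zeta_m$ comes from $X_*(S)\cong X_*(\tilde S)$. You merely supply the bookkeeping (the indicator functions $\mb 1_{aG_{x,m}}$ and the compatibility of twisting with \eqref{eq:4.6}) that the paper leaves implicit.
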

\begin{proof}
(a) This is clear from \eqref{eq:4.8}.\\
(b) Notice that 
\begin{equation}\label{eq:4.9}
A_G / A_G \cap G_{x,m} \cong X_* (\mc A_G) \otimes_\Z ( F^\times / 1 + \varpi_F^m \mf o_F ) ,
\end{equation}
and similarly for $\tilde G$. Since $\zeta_m$ comes from the isomorphism $X_* (S) \cong X_* (\tilde S)$,
it induces a linear bijection $X_* (A_G) \to X_* (A_{\tilde G})$, and hence an isomorphism from
\eqref{eq:4.9} to its counterpart for $\tilde G$. The $A_G$-characters of representations in
$\Rep (G,G_{x,m})$ are precisely the characters of \eqref{eq:4.9}, and $\overline{\zeta_m^G}$
pushes them forward along $\zeta_m$.
\end{proof}

Let $P$ be a parabolic subgroup of $G$ with a Levi factor $M$, which contains $S$.
By \cite[\S 1.6]{Bus} the normalized parabolic functor $I_P^G$ sends $\Rep (M,M_{x,m})$ to 
$\Rep (G,G_{x,m})$. We will exploit an expression for this functor \cite{Bus} in terms that can be 
transferred to $\tilde G$ with Theorems \ref{thm:4.1} and \ref{thm:4.2}.

Let $P^{op}$ be the parabolic subgroup of $G$ that is opposite to $P$ with respect to $M$.
Let $M_{x,m} = G_{x,m} \cap M$ be the version of $G_{x,m}$ for $M$.
Recall that an element $g \in M$ is called $(P,G_{x,m})$-positive if 
\[
g (G_{x,m} \cap P) g^{-1} \subset G_{x,m} \cap P \quad \text{and} \quad
g (G_{x,m} \cap P^{op}) g^{-1} \supset G_{x,m} \cap P^{op}.
\]
Let $\mc H^+ (M,M_{x,m})$ be the subalgebra of $\mc H (M,M_{x,m})$ consisting of functions that
are supported on $(P,G_{x,m})$-positive elements. In \cite[\S 3.3]{Bus}, which is based on \cite{BuKu2}, 
a canonical injective algebra homomorphism
\[
j_P : \mc H^+ (M,M_{x,m}) \to \mc H (G,G_{x,m})
\]
is given. Let $\tilde P$ and $\tilde M$ be the subgroups of $\tilde G$ corresponding to $P$ and $M$ 
via the equality of based root data. All the above constructions also work in $\tilde G$, and we 
endow the resulting objects with tildes.

\begin{lem}\label{lem:4.7}
\enuma{
\item $\zeta_m^M$ restricts to an algebra isomorphism from
$\mc H^+ (M,M_{x,m})$ to $\mc H^+ (\tilde M, \tilde M_{\tilde x,m})$.
\item The following diagram commutes:
\[
\begin{array}{ccc}
\mc H (G,G_{x,m}) & \xrightarrow{\zeta_m^G} & \mc H (\tilde G,\tilde G_{\tilde x,m}) \\
\uparrow j_P & & \uparrow j_{\tilde P} \\
\mc H^+ (M,M_{x,m}) & \xrightarrow{\zeta_m^M} & \mc H^+ (\tilde M, \tilde M_{\tilde x,m})
\end{array} .
\]
}
\end{lem}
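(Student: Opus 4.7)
The plan is to reduce both parts to the scheme-theoretic identifications provided by Theorem \ref{thm:4.1}, applied not only to $G$ but also to its Levi $M$ and to the root-subgroup subschemes corresponding to $\Sigma(U, S)$ and $\Sigma(U^{op}, S)$. Under these identifications the parahoric decomposition $G_{x,m} = (G_{x,m} \cap U^{op})(G_{x,m} \cap M)(G_{x,m} \cap U)$ transfers faithfully from $G$ to $\tilde G$, and what remains is to track how the Cartan decomposition underlying $\zeta_m^G$ and $\zeta_m^M$ interacts with positivity and with the map $j_P$.

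For part (a), I would first rewrite $(P, G_{x,m})$-positivity of $g \in M$ in root-group terms: by total decomposition, $g$ is positive exactly when $\langle \alpha, H_M(g) \rangle \geq 0$ for every $\alpha \in \Sigma(U, S)$, and likewise with the opposite inequality for $\alpha \in \Sigma(U^{op}, S)$. These conditions depend only on the image of $g$ in the quotient $A_M / (A_M \cap M_{x,m})$ appearing in Lemma \ref{lem:4.6}(b) applied to $M$. Since $\zeta_m^M$ is induced from the isomorphism $X_*(S) \cong X_*(\tilde S)$ and the identification of based root data used to define $\tilde{\mc G}$ also identifies $P$ with $\tilde P$, the positivity conditions match. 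Hence $\zeta_m^M$ sends positive $M_{x,m}$-double cosets bijectively onto positive $\tilde M_{\tilde x, m}$-double cosets. Since positive elements form a semigroup, $\mc H^+(M, M_{x,m})$ is a subalgebra of $\mc H(M, M_{x,m})$, and the restriction of the algebra isomorphism $\zeta_m^M$ gives the claimed isomorphism.

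For part (b), I would verify commutativity on the standard basis $\{\mathbf{1}_{M_{x,m} g M_{x,m}} : g \in M^+\}$ of $\mc H^+(M, M_{x,m})$. By the explicit construction in \cite[\S 3.3]{Bus}, $j_P(\mathbf{1}_{M_{x,m} g M_{x,m}})$ equals $\mathbf{1}_{G_{x,m} g G_{x,m}}$ multiplied by an explicit factor built from $\delta_P^{1/2}(g)$ and an index involving $G_{x,m} \cap U$ and its conjugate by $g$. By Theorem \ref{thm:4.1} applied to the parabolic and unipotent subschemes associated to $P$, these indices are preserved on passing to $\tilde G$, and $\delta_P$ corresponds to $\delta_{\tilde P}$ through the root-datum identification. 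Moreover $\zeta_m^G$ sends $\mathbf{1}_{G_{x,m} g G_{x,m}}$ to $\mathbf{1}_{\tilde G_{\tilde x, m} \tilde g \tilde G_{\tilde x, m}}$ for the same Cartan representative $\tilde g \in \tilde M$ that appears when computing $j_{\tilde P} \circ \zeta_m^M$ on the corresponding basis element.

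I expect the main obstacle to be precisely this last compatibility, namely that one common representative $\tilde g \in \tilde M$ can serve simultaneously as the Cartan representative for the image of $g$ under $\zeta_m^M$ (as a $\tilde M_{\tilde x, m}$-double coset) and under $\zeta_m^G$ (as a $\tilde G_{\tilde x, m}$-double coset). This should follow from the construction of $\zeta$ in \cite[\S 4]{Gan2}, since both Cartan decompositions are governed by the shared cocharacter lattice $X_*(S)$, but it warrants a careful check. Once it is in place, the scalars and cosets match, the diagram commutes on a basis of $\mc H^+(M, M_{x,m})$, and hence on the whole algebra.
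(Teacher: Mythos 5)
Your proposal is correct and follows essentially the same route as the paper: positivity is rephrased via the Cartan decomposition and the valuations $|\alpha(g)|_F$, which $\zeta_m^M$ preserves because it comes from $X_*(S)\cong X_*(\tilde S)$, and commutativity in (b) is checked on basis elements by matching the support and the normalizing factor (the paper writes that factor as $\delta_P(g)\,\mu_M(M_{x,m})\mu_G(G_{x,m})^{-1}$ and transfers it using the index equalities from Theorem \ref{thm:4.1}). The compatibility of Cartan representatives that you flag is exactly the point the paper uses implicitly, and it holds for the reason you give.
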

\begin{proof}
(a) The property "$(P,G_{x,m})$-positive" can be expressed in terms of the Cartan decomposition of
$M$. Namely, the elements of a double coset $M_{x,0} g M_{x,0}$ with $g \in Z_M (S)$ are
$(P,G_{x,m})$-positive if and only if 
\begin{equation}\label{eq:4.11}
|\alpha (g)|_F \leq 1 \text{ for all } \alpha \in \Sigma (G,S) \text{ that appear in Lie}(P) .
\end{equation}
(Notice that $|\alpha |_F$ extends naturally to a character of $Z_G (S)$ because $S$ is cocompact 
in $Z_G (S)$.) The map $\zeta_m$ from Theorem \ref{thm:4.2} for $M$ preserve the property 
\eqref{eq:4.11}, because it comes from the isomorphism $X_* (S) \cong X_* (\tilde S)$, which
preserves positivity of roots. Thus $\zeta_m$ maps $(P,G_{x,m})$-positive elements to
$(\tilde P,\tilde G_{\tilde x,m})$-positive elements, and then Theorem \ref{thm:4.2} provides
the desired isomorphism.\\
(b) We endow $M$ (resp. $\tilde M$) with the Haar measure that gives $M_{x,0}$ (resp. 
$\tilde M_{\tilde x,0}$) volume 1. Suppose that $f \in \mc H^+ (M,M_{x,m})$ has support $M_{x,m} g 
M_{x,m}$ with $g \in M$. The map $j_P$ is characterized by: $j_P f$ has support $G_{x,m} g G_{x,m}$ and
\begin{equation}\label{eq:4.12}
j_P f (g) = f(g) \delta_P (g) \mu_M (M_{x,m}) \mu_G (G_{x,m})^{-1} .
\end{equation} 
By Theorem \ref{thm:4.1}
\[
\mu_G (G_{x,m}) = [G_{x,0} : G_{x,m}]^{-1} = [\tilde G_{\tilde x,0} : \tilde G_{\tilde x,m}]^{-1} =
\mu_{\tilde G}(\tilde G_{\tilde x,m}) ,
\]
and similarly for $\mu_M (M_{x,m})$. It is well-known that $\delta_P (g)$ is the product, over all 
$\alpha \in \Sigma (G,S)$ that appear in Lie$(P)$, of the factors $|\alpha (g)|_F^{\dim U_\alpha / 
U_{2 \alpha}}$. The root subgroup $U_\alpha$ contains the root subgroup $U_{2\alpha}$ if $2 \alpha$ 
is also a root, and otherwise $U_{2\alpha} = \{1\}$ by definition. See 
\cite[Lemme V.5.4]{Ren} for a proof (although there a different convention is used, which results in 
replacing $g$ by $g^{-1}$). By Theorem \ref{thm:4.1} $\dim U_\alpha$ equals $\dim U_{\tilde \alpha}$,
where $\tilde \alpha \in \Sigma (\tilde G,\tilde S)$ corresponds to $\alpha$. Furthermore 
$\delta_P$ is trivial on compact subgroups, so $\delta_P (g)$ depends only on $M_{x,m} g M_{x,m}$. 
It follows that 
\begin{equation}\label{eq:4.13}
\delta_P (M_{x,m} g M_{x,m}) = \delta_{\tilde P} (\zeta_m (M_{x,m} g M_{x,m})) .
\end{equation}
Knowing that, we take another look at \eqref{eq:4.12} and we see that $\zeta_m^G \circ j_P =
j_{\tilde P} \circ \zeta_m^M$.
\end{proof}

Let $\mc I_{P,m} : \Mod (\mc H (M,M_{x,m})) \to \Mod (\mc H (G,G_{x,m}))$ be the composition of\\
$\mr{Res}^{\mc H (M,M_{x,m})}_{\mc H^+ (M,M_{x,m})}$ and
\[
\begin{array}{ccc}
\Mod (\mc H^+ (M,M_{x,m})) & \to & \Mod (\mc H (G,G_{x,m})) \\
V & \mapsto & \Hom_{\mc H^+ (M,M_{x,m}))} (\mc H (G,G_{x,m}), V)
\end{array},
\]
where $\mc H (G,G_{x,m})$ is regarded as a left $\mc H^+ (M,M_{x,m})$-module via $j_P$.

\begin{thm}\label{thm:4.3}
\enuma{
\item The equivalences of categories \eqref{eq:4.7} are compatible with normalized parabolic 
induction, in the sense that the following diagram commutes:
\[
\begin{array}{ccc}
\Rep (G,G_{x,m}) & \xrightarrow{\overline{\zeta_m^G}} & \Rep (\tilde G, \tilde G_{\tilde x,m}) \\
\uparrow I_P^G & & \uparrow I_{\tilde P}^{\tilde G} \\
\Rep (M,M_{x,m}) & \xrightarrow{\overline{\zeta_m^M}} & \Rep (\tilde M, \tilde M_{\tilde x,m}) 
\end{array}.
\]
\item The equivalences of categories \eqref{eq:4.7} are compatible with normalized Jacquet
restriction, in the sense that the following diagram commutes:
\[
\begin{array}{ccc}
\Rep (G,G_{x,m}) & \xrightarrow{\overline{\zeta_m^G}} & \Rep (\tilde G, \tilde G_{\tilde x,m}) \\
\downarrow J_P^G & & \downarrow J_{\tilde P}^{\tilde G} \\
\Rep (M,M_{x,m}) & \xrightarrow{\overline{\zeta_m^M}} & \Rep (\tilde M, \tilde M_{\tilde x,m}) 
\end{array}.
\]
\item $\overline{\zeta_m^G}$ and its inverse send supercuspidal representations to supercuspidal
representation. The same holds for unitary supercuspidal representations.
\item $\overline{\zeta_m^G}$ and its inverse preserve temperedness and essential square-integrability.
}
\end{thm}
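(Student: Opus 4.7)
The plan is to derive each of the four parts from Lemmas~\ref{lem:4.6} and~\ref{lem:4.7}, combined with standard representation-theoretic characterizations on the $p$-adic side.

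For part (a), I would first invoke \cite[\S 3.3]{Bus} to identify, via the equivalence \eqref{eq:4.6}, the functor $I_P^G$ on $G_{x,m}$-fixed vectors with the functor $\mc I_{P,m}$ built from $j_P$ (and analogously on the $\tilde G$-side). The commuting square of Lemma~\ref{lem:4.7}(b) then yields a natural isomorphism $(\zeta_m^G)_* \circ \mc I_{P,m} \cong \mc I_{\tilde P,m} \circ (\zeta_m^M)_*$, which translates back through \eqref{eq:4.6} into the desired square for $\overline{\zeta_m^G}$ and $I_P^G$. For part (b), I would use that $J_P^G$ is left adjoint to $I_P^G$ by Frobenius reciprocity and that it preserves the subcategories $\Rep(G,G_{x,m})$ and $\Rep(M,M_{x,m})$. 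Since part (a) shows that the equivalences $\overline{\zeta_m^G}$ and $\overline{\zeta_m^M}$ intertwine $I_P^G$ with $I_{\tilde P}^{\tilde G}$, they must then intertwine their left adjoints $J_P^G$ and $J_{\tilde P}^{\tilde G}$ up to canonical isomorphism.

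For part (c), use that an irreducible smooth $\pi$ is supercuspidal iff $J_Q^G(\pi) = 0$ for every proper parabolic $Q$ of $G$; by part (b) this vanishing condition transfers in both directions, so supercuspidality is preserved. For the unitary refinement, recall that an irreducible supercuspidal is unitary iff its central character $\omega_\pi$ is unitary, and because $Z(G)/A_G$ is compact this is equivalent to unitarity of $\omega_\pi|_{A_G}$. Lemma~\ref{lem:4.6}(b) identifies the $A_G$-character of $\pi$ with the $A_{\tilde G}$-character of $\overline{\zeta_m^G}(\pi)$ by push-forward along an isomorphism $A_G/(A_G \cap G_{x,m}) \to A_{\tilde G}/(A_{\tilde G} \cap \tilde G_{\tilde x,m})$; that isomorphism sends maximal compact (i.e.\ torsion) subgroup to maximal compact subgroup, hence maps unitary characters to unitary characters.

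Part (d) I would handle via the Casselman criterion: a finite-length $\pi$ is tempered (resp.\ essentially square-integrable modulo center) iff, for every proper parabolic $Q = L U_Q$ of $G$, every $A_L$-exponent $\chi$ appearing in $J_Q^G(\pi)$ satisfies $|\chi(a)| \leq 1$ (resp.\ $<1$) for all $a \in A_L$ with $|\alpha(a)|_F < 1$ on the roots of $A_L$ in $\mr{Lie}(U_Q)$, modulo the $A_G$-contribution. By part (b) the Jacquet modules match up under $\overline{\zeta_m^L}$, and by Lemma~\ref{lem:4.6}(b) their $A_L$-exponents correspond via push-forward along the isomorphism $A_L/(A_L \cap L_{x,m}) \cong A_{\tilde L}/(A_{\tilde L} \cap \tilde L_{\tilde x,m})$ coming from $X_*(S) \cong X_*(\tilde S)$. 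Since this isomorphism preserves the positivity of roots (as already used in Lemma~\ref{lem:4.7}(a)), the chamber conditions characterizing temperedness and essential square-integrability carry over intact. The most delicate point is part (a), where one must check that the Bushnell description of $I_P^G$ on $G_{x,m}$-fixed vectors handles the normalization factor $\delta_P^{1/2}$ correctly on both sides; this is exactly what identity \eqref{eq:4.13} inside Lemma~\ref{lem:4.7}(b) ensures.
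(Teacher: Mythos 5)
Your proposal is correct, and parts (a), (b) and (c) coincide with the paper's argument: (a) via Bushnell's identification of $I_P^G$ with $\mc I_{P,m}$ through \eqref{eq:4.6} combined with Lemma \ref{lem:4.7} and the normalization identity \eqref{eq:4.13}; (b) via uniqueness of left adjoints; (c) via vanishing of Jacquet modules and, for unitarity, cocompactness of $A_G$ in $Z(G)$ together with Lemma \ref{lem:4.6}.b. The only genuine divergence is in part (d). The paper handles essential square-integrability by citing Badulescu's transfer argument \cite[Th\'eor\`eme 2.17.b]{Badu} (reformulated in Ganapathy's setting) together with Lemma \ref{lem:4.6}.a, and then deduces temperedness indirectly from Waldspurger's result that every irreducible tempered representation is a direct summand of a representation parabolically induced from a square-integrable one, so that part (a) finishes the job. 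You instead apply the Casselman criterion uniformly to both properties: the Jacquet modules correspond by part (b), their $A_L$-exponents correspond by Lemma \ref{lem:4.6}.b applied to the Levi $L$, and the relevant cone conditions are preserved because the isomorphism of \eqref{eq:4.9} comes from $X_*(S)\cong X_*(\tilde S)$ (which respects positivity of roots) and $q_F = q_{\tilde F}$ (so absolute values of exponents match). This is essentially what Badulescu's cited proof amounts to, so your route is a self-contained unpacking of the paper's reference; it buys uniformity and avoids the appeal to Waldspurger for temperedness, at the cost of having to verify that exponents of representations in $\Rep(L,L_{x,m})$ factor through $A_L/(A_L\cap L_{x,m})$ --- which does hold, since that subcategory is closed under subquotients and $A_L\cap L_{x,m}$ acts trivially on the generating fixed vectors.
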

\begin{proof}
(a) Lemma \ref{lem:4.7} ensures that the diagram
\[
\begin{array}{ccc}
\Mod (\mc H (G, G_{x,m})) & \xrightarrow{(\zeta_m^G )_*} & 
\Mod (\mc H (\tilde G, \tilde G_{\tilde x,m})) \\
\uparrow \mc I_{P,m} & & \uparrow \mc I_{\tilde P,m} \\
\Mod (\mc H (M, M_{x,m})) & \xrightarrow{(\zeta_m^M )_*} & 
\Mod (\mc H (\tilde M, \tilde M_{\tilde x,m})) 
\end{array}
\]
commutes. According to \cite[\S 4.1]{Bus}, the unnormalized parabolic induction functor $\mr{Ind}_P^G$ 
fits in a commutative diagram
\[
\begin{array}{ccc}
\Rep (G,G_{x,m}) & \to & \Mod (\mc H (G, G_{x,m})) \\
\uparrow \mr{Ind}_P^G & & \uparrow \mc I_{P,m} \\
\Rep (M,M_{x,m}) & \to & \Mod (\mc H (M, M_{x,m}))  
\end{array},
\]
where the horizontal arrows are the equivalences of categories from \eqref{eq:4.6}. Of course the
same holds for $\tilde G$. These two commutative diagrams entail that 
\[
\overline{\zeta_m^G} \circ \mr{Ind}_P^G = 
\mr{Ind}_{\tilde P}^{\tilde G} \circ \overline{\zeta_m^M} .
\]
In view of \eqref{eq:4.13}, if we twist this equality on the left hand side by $\delta_P^{1/2}$
and on the right hand side by $\delta_{\tilde P}^{1/2}$, it remains valid. That yields exactly
the desired relation with normalized parabolic induction.\\
(b) By Frobenius reciprocity $J_P^G$ is left adjoint to $I_P^G$, so by part (a) 
$\overline{\zeta_m^M} \circ J_P^G \circ \overline{\zeta_m^G}^{-1}$
is left adjoint to $I_{\tilde P}^{\tilde G}$. Now we use the uniqueness of adjoints.\\
(c) The first claim follows from part (a), or alternatively from part (b). For the second claim, we 
note that a supercuspidal $G$-representation is unitary if and only if its central character is unitary. 
As $A_G$ is cocompact in $Z(G)$, that is equivalent to: the $A_G$-character is unitary. 
By Lemma \ref{lem:4.6}.b, $\overline{\zeta_m^G}$ preserve the latter property. \\
(d) For the property "square integrable modulo centre" one can follow the proof of 
\cite[Th\'eor\`eme 2.17.b]{Badu}, reformulated in the setting of \cite{Gan2}. Combining that with
Lemma \ref{lem:4.6}.a, we find that $\overline{\zeta_m^G}$ also preserves essential square-integrability.

By \cite[Proposition III.4.1]{Wal}, every irreducible tempered representation\\ 
$\tau \in \Rep (G,G_{x,m})$ is a direct summand of a completely reducible representation of the form 
$I_P^G (\pi)$, where $\pi \in \Rep (M,M_{x,m})$ is square-integrable modulo centre. 
By the above and part (a),
\begin{equation}\label{eq:4.14}
\overline{\zeta_m^G} \big( I_P^G (\pi) \big) \cong 
I_{\tilde P}^{\tilde G} \big( \overline{\zeta_m^M} (\pi) \big)
\end{equation}
is also a direct sum of irreducible tempered representations. As $\overline{\zeta_m^G} (\tau)$ is 
a direct summand of \eqref{eq:4.14}, it is tempered.
\end{proof}

Consider an inertial equivalence class $\mf s = [M,\sigma ]_G$, where $S \subset M$. Choose 
$m \in \Z_{>0}$ such that $\Rep (G)^{\mf s} \subset \Rep (G,G_{x,m})$, and similarly for all
Levi subgroups of $G$ containing $M$. This is easy for supercuspidal Bernstein components
and possible in general because parabolic induction preserves
depths \cite[Theorem 5.2]{MoPr}. Fix $\ell \in \Z_{>m}$ so that Theorems \ref{thm:4.1}, \ref{thm:4.2}
and \ref{thm:4.3} apply. We may and will assume that $\sigma$ fulfills Condition \ref{cond:3.1}.
By Theorem \ref{thm:4.3}.d the $\tilde M$-representation $\tilde \sigma = \overline{\zeta_m^M}(\sigma)$
is unitary and supercuspidal. We write $\tilde{\mf s}, \tilde{\mc O}$ etc. for objects constructed
from $\tilde \sigma$.

\begin{prop}\label{prop:4.4}
\enuma{
\item The bijection $\Sigma (G,A_M) \to \Sigma (\tilde{G},A_{\tilde M})$, induced by the
equality of the root data of $\mc G$ and $\tilde{\mc G}$, sends $\Sigma_{\mc O,\mu}$ onto 
$\Sigma_{\tilde{\mc O},\mu}$.
\item Let $\alpha \in \Sigma_{\mc O,\mu}$ with image $\tilde \alpha \in \Sigma_{\tilde{\mc O},\mu}$.
The pullback of $X_{\tilde \alpha}$ along \eqref{eq:4.8} is $X_\alpha$ and $q_\alpha = q_{\tilde \alpha},
q_{\alpha*} = q_{\tilde \alpha *}$.
}
\end{prop}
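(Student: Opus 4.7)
\medskip

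The plan is to use the tight dictionary between the analytic data ($\mu$-functions, reducibility of parabolic induction) on one side and the algebraic data ($X_\alpha$, $h_\alpha^\vee$, and the subgroups $M_\sigma^2$, $M_\alpha^1$) on the other, and then transfer everything from $G$ to $\tilde G$ using Theorem \ref{thm:4.3} and Lemma \ref{lem:4.6}.

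For part (a) I would argue via Corollary \ref{cor:1.1}: membership $\alpha \in \Sigma_{\mc O,\mu}$ is equivalent to the existence of a non-unitary $\chi \in X_\nr (M)$ such that $I_{P \cap M_\alpha}^{M_\alpha}(\sigma \otimes \chi)$ is reducible (using that a zero of $\mu^{M_\alpha}$ on $\mc O$ forces a pole of $\mu^{M_\alpha}$ somewhere on $\mc O$ by the explicit shape \eqref{eq:3.22}). The functor $\overline{\zeta_m^{M_\alpha}}$ commutes with parabolic induction from $\tilde P \cap \tilde M_{\tilde \alpha}$ by Theorem \ref{thm:4.3}.a, sends $\sigma$ to the supercuspidal $\tilde \sigma$ by Theorem \ref{thm:4.3}.c, and by Lemma \ref{lem:4.6}.a it intertwines unramified twists $\chi \leftrightarrow \tilde \chi$ and preserves the property of being unitary (the unitary unramified characters are precisely the characters of $A_M/(A_M \cap M_{x,m})$ of absolute value one on each coordinate of \eqref{eq:4.9}, which matches under $\zeta_m$). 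Since $\overline{\zeta_m^{M_\alpha}}$ is an equivalence of categories, it preserves reducibility, giving $\alpha \in \Sigma_{\mc O,\mu} \Leftrightarrow \tilde \alpha \in \Sigma_{\tilde{\mc O},\mu}$.

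For part (b) the main task is to check that the group isomorphism $M/M^1 \to \tilde M / \tilde M^1$ induced by $\zeta_m$ sends $h_\alpha^\vee$ to $h_{\tilde \alpha}^\vee$. Two inclusions must be tracked. First, $\zeta_m$ identifies $(M \cap M_\alpha^1)/M^1$ with $(\tilde M \cap \tilde M_{\tilde \alpha}^1)/\tilde M^1$, because $M_\alpha^1$ is generated by the compact subgroups of the Levi corresponding to $\alpha$, and by the equality of based root data these match under the identification $\Sigma(G,A_M) \leftrightarrow \Sigma(\tilde G, A_{\tilde M})$ of root subgroups (cf.\ Theorem \ref{thm:4.1}). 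Second, $\zeta_m$ identifies $M_\sigma^2/M^1$ with $\tilde M_{\tilde \sigma}^2 / \tilde M^1$, because $X_\nr(M,\sigma) = \{\chi : \sigma \otimes \chi \cong \sigma\}$ is preserved under Lemma \ref{lem:4.6}.a (as $\overline{\zeta_m^M}$ is an equivalence of categories compatible with unramified twists), and $M_\sigma^2$ is the joint kernel of $X_\nr(M,\sigma)$. The intersection $(M_\sigma^2 \cap M_\alpha^1)/M^1$ is infinite cyclic; its generator is uniquely determined by the positivity condition $\nu_F(\alpha(h_\alpha^\vee)) > 0$, and this positivity is preserved because $\zeta_m$ preserves positive roots and the valuation on $F^\times/(1+\varpi_F^\ell \mf o_F) \cong \tilde F^\times/(1+\varpi_{\tilde F}^\ell \mf o_{\tilde F})$ via \eqref{eq:4.2}. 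Hence $\zeta_m(h_\alpha^\vee) = h_{\tilde \alpha}^\vee$, and consequently
\[
X_\alpha (\chi) = \chi (h_\alpha^\vee) = \overline{\zeta_m^M}(\chi) \bigl(\zeta_m(h_\alpha^\vee)\bigr) = \tilde \chi (h_{\tilde \alpha}^\vee) = X_{\tilde \alpha}(\tilde \chi),
\]
which is precisely the claimed pullback formula. The equality $q_\alpha = q_{\tilde \alpha}$ and $q_{\alpha *} = q_{\tilde \alpha *}$ then follows by matching poles of $\mu^{M_\alpha}$ with those of $\mu^{\tilde M_{\tilde \alpha}}$: by \eqref{eq:3.22} these are located at the values $X_\alpha \in \{q_\alpha^{\pm 1}, -q_{\alpha *}^{\pm 1}\}$, and by Corollary \ref{cor:1.1} applied on both sides together with Theorem \ref{thm:4.3}.a,c,d and Lemma \ref{lem:4.6}, the non-unitary reducibility loci of $I_{P \cap M_\alpha}^{M_\alpha}(\sigma \otimes \cdot)$ match those of $I_{\tilde P \cap \tilde M_{\tilde \alpha}}^{\tilde M_{\tilde \alpha}}(\tilde \sigma \otimes \cdot)$; combined with the identification $X_\alpha = X_{\tilde \alpha} \circ \overline{\zeta_m^M}$, this forces the parameters to coincide.

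The main obstacle is the identification $\zeta_m(h_\alpha^\vee) = h_{\tilde \alpha}^\vee$: one must check that $\zeta_m$ respects the three layered structures (the Levi $M_\alpha^1$, the stabilizer subgroup $M_\sigma^2$, and the positivity normalization), and this rests on simultaneously invoking the group-scheme level isomorphism of Theorem \ref{thm:4.1}, the preservation of unramified twists from Lemma \ref{lem:4.6}, and the compatibility of valuations \eqref{eq:4.2}. Everything else is a formal consequence.
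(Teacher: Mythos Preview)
Your proposal is correct and follows essentially the same route as the paper: both argue (a) by translating membership in $\Sigma_{\mc O,\mu}$ into non-unitary reducibility via Corollary~\ref{cor:1.1} and then transporting reducibility through the equivalence $\overline{\zeta_m^{M_\alpha}}$ of Theorem~\ref{thm:4.3}.a, and both argue (b) by first pinning down $\zeta_m(h_\alpha^\vee)=h_{\tilde\alpha}^\vee$ (tracking $M_\alpha^1$, $M_\sigma^2$, and the positivity normalization) and then matching the pole loci. The only cosmetic difference is that you handle (a) as a single biconditional using preservation of unitarity of unramified characters, whereas the paper splits into the cases $\alpha\notin\Sigma_{\mc O,\mu}$ and $\alpha\in\Sigma_{\mc O,\mu}$ and for the latter uses directly that $X_{\tilde\alpha}\circ\overline{\zeta_m^{M_\alpha}}$ is a positive multiple of $X_\alpha$.
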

\begin{proof}
Let $\alpha \in \Sigma_\red (A_M)$, with image $\tilde \alpha \in \Sigma_\red (A_{\tilde M})$.
The groups $\mc M_\alpha$ and $\tilde{\mc M}_{\tilde \alpha}$ correspond via the equality of root data
of $\mc G$ and $\tilde{\mc G}$. For $\chi \in X_\nr (M_\alpha)$, Theorem \ref{thm:4.3}.a implies that
\[
\overline{\zeta_m^M} (\sigma \otimes \chi) = \tilde \sigma \otimes \overline{\zeta_m^M}(\chi) .
\] 
By \eqref{eq:4.7}, $I_{P \cap M_\alpha}^{M_\alpha} (\sigma \otimes \chi)$ is reducible if and only if 
$I_{\tilde P \cap \tilde M_{\tilde \alpha}}^{\tilde M_{\tilde \alpha}} \big( \tilde \sigma \otimes 
\overline{\zeta_m^M}(\chi) \big)$ is reducible. If $\alpha \notin \Sigma_{\mc O,\mu}$, then 
$I_{P \cap M_\alpha}^{M_\alpha} (\sigma \otimes \chi)$ is irreducible for all non-unitary $\chi \in X_\nr 
(M_\alpha)$. It follows that $I_{\tilde P \cap \tilde M_{\tilde \alpha}}^{\tilde M_{\tilde \alpha}} 
\big( \tilde \sigma \otimes \tilde \chi \big)$ is irreducible for all non-unitary $\tilde \chi \in 
X_\nr (\tilde M_{\tilde \alpha})$, and by Corollary \ref{cor:1.1} 
$\tilde \alpha \notin \Sigma_{\tilde{\mc O},\mu}$.

On the other hand, suppose that $\alpha \in \Sigma_{\mc O,\mu}$. Then 
$I_{P \cap M_\alpha}^{M_\alpha} (\sigma \otimes \chi)$ is reducible for a $\chi \in X_\nr (M_\alpha)$ 
with $X_\alpha (\chi) = q_\alpha > 1$.
It is clear from the construction of $X_\alpha$ in \eqref{eq:3.20} that $X_{\tilde \alpha} \circ
\overline{\zeta_m^{M_\alpha}}$ is a multiple of $X_\alpha$. Consequently $I_{\tilde P \cap \tilde M_{
\tilde \alpha}}^{\tilde M_{\tilde \alpha}} \big( \tilde \sigma \otimes \overline{\zeta_m^M}(\chi) \big)$
is reducible and $X_{\tilde \alpha}( \overline{\zeta_m^M}(\chi) ) \in \R_{>0} \setminus \{1\}$. With
Corollary \ref{cor:1.1} we conclude that $\tilde \alpha \in \Sigma_{\tilde{\mc O},\mu}$.\\
(b) By \eqref{eq:4.7} and Theorem \ref{thm:4.2}, the bijection $\zeta_m$ induces a bijection
\[
(M_\sigma^2 \cap M_\alpha^1) / M^1 \longrightarrow (\tilde M_{\tilde \sigma}^2 \cap 
\tilde M_{\tilde \alpha}^1 ) / \tilde{M}^1 \cong \Z .
\]
The element $h_\alpha^\vee$ generates $(M_\sigma^2 \cap M_\alpha^1) / M^1$, while $h_{\tilde \alpha}^\vee$
generates $ (\tilde M_{\tilde \sigma}^2 \cap \tilde M_{\tilde \alpha}^1 ) / \tilde{M}^1$. These
generators are determined by conditions $\nu_F (\alpha (h_\alpha^\vee)) > 0$ and $\nu_{\tilde F} (
\tilde \alpha (h_{\tilde \alpha}^\vee))> 0$, respectively. As 
$\nu_{\tilde F} \circ \tilde \alpha \circ \zeta_m = \nu_F \circ \alpha$, we can conclude that
\begin{equation}\label{eq:4.10}
\zeta_m (h_\alpha^\vee) = h_{\tilde \alpha}^\vee \quad \text{and} \quad 
X_\alpha = X_{\tilde \alpha} \circ \overline{\zeta_m^{M_\alpha}} .
\end{equation}
Then $X_{\tilde \alpha}( \overline{\zeta_m^{M_\alpha}} (\chi) ) = q_\alpha$ and
$I_{\tilde P \cap \tilde M_{\tilde \alpha}}^{\tilde M_{\tilde \alpha}} \big( \tilde \sigma \otimes 
\overline{\zeta_m^M}(\chi) \big)$ is reducible, so $q_\alpha = q_{\tilde \alpha}$.

If $q_{\alpha *} > 1$, then $I_{P \cap M_\alpha}^M (\sigma \otimes \chi')$ is reducible for a $\chi'
\in X_\nr (M_\alpha)$ with $X_\alpha (\chi') = -q_{\alpha *}$. In that case $I_{\tilde P \cap 
\tilde M_{\tilde \alpha}}^{\tilde M_{\tilde \alpha}} \big( \tilde \sigma \otimes 
\overline{\zeta_m^M}(\chi') \big)$ is also reducible and $X_{\tilde \alpha}( \overline{\zeta_m^{M_\alpha}} 
(\chi') ) = -q_{\alpha*}$, so by Corollary \ref{cor:1.1} $q_{\tilde \alpha*} = q_{\alpha *}$.
When $q_{\alpha*} = 1$, $I_{P \cap M_\alpha}^M (\sigma \otimes \chi')$ is irreducible for all
$\chi' \in X_\nr (M_\alpha)$ with $X_\alpha (\chi') \in \R_{<-1}$. That translates to 
$\tilde M_{\tilde \alpha}$, and then Corollary \ref{cor:1.1} implies that $q_{\tilde \alpha *} = 1$.
\end{proof}

We summarise the conclusions of this sections:

\begin{cor}\label{cor:4.5}
Let $\Rep (G)^{\mf s}$ be an arbitrary Bernstein block for a connected reductive group $\mc G$
over a local function field $F$. There exist:
\begin{itemize}
\item a $p$-adic field $\tilde F$, sufficiently close to $F$,
\item a connected reductive $\tilde F$-group $\tilde{\mc G}$ with the same based root datum as $\mc G$,
\item a Bernstein block $\Rep (\tilde G)^{\tilde{\mf s}}$ for $\tilde G$, 
\end{itemize}
such that:
\begin{itemize}
\item $\Rep (G)^{\mf s}$ is equivalent with $\Rep (\tilde G )^{\tilde{\mf s}}$,
\item $\mc H (\mc O,G)$ is isomorphic with $\mc H (\tilde{\mc O}, \tilde G)$,
\item whenever $\alpha \in \Sigma_{\mc O,\mu}$ and $\tilde \alpha \in \Sigma_{\tilde{\mc O},\mu}$
correspond (via Proposition \ref{prop:4.4}), $\lambda (\alpha) = \lambda (\tilde \alpha )$
and $\lambda^* (\alpha ) = \lambda^* (\tilde \alpha )$.
\end{itemize}
\end{cor}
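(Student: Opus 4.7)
The plan is to assemble the constructions and results established throughout Section \ref{sec:char0}. Starting from $\mf s = [M,\sigma]_G$ with $\mc M$ chosen to contain a maximal $F$-split torus $\mc S$ and $\sigma$ arranged so that Condition \ref{cond:3.1} holds, I would first choose $m \in \Z_{>0}$ so that $\Rep (G)^{\mf s}$, as well as the analogous Bernstein blocks for every Levi subgroup of $G$ containing $M$, sit inside the corresponding depth-$m$ subcategories $\Rep(\cdot,\cdot_{x,m})$; this is legitimate because parabolic induction preserves depth \cite{MoPr}. Then pick $\ell \gg m$ so large that the Galois action on the based absolute root datum of $\mc G$ factors through $\mr{Gal}(F_s/F)/\mb I_F^\ell$ and that Theorems \ref{thm:4.1}, \ref{thm:4.2} and \ref{thm:4.3} are applicable at depth $m$, and choose a $p$-adic field $\tilde F$ that is $\ell$-close to $F$.

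Next I would construct $\tilde{\mc G}$ by the recipe laid out at the start of the section: define the quasi-split $\tilde F$-form $\tilde{\mc G}^*$ by transporting the Galois action from $\mc G^*$ via \eqref{eq:4.3}, and let $\tilde{\mc G}$ be the inner twist of $\tilde{\mc G}^*$ classified by the character of $Z(\tilde G^{*\vee}_{\Sc})^{\mb W_{\tilde F}}$ corresponding, under \eqref{eq:4.5}, to the parameter of $\mc G$ as an inner twist of $\mc G^*$. By construction $\mc G$ and $\tilde{\mc G}$ share the same based absolute root datum. Set $\tilde \sigma = \overline{\zeta_m^M}(\sigma)$; by Theorem \ref{thm:4.3}(c) this is unitary supercuspidal, and the explicit match of $\mu$-functions in Proposition \ref{prop:4.4} together with Lemma \ref{lem:4.6} shows that Condition \ref{cond:3.1} is preserved. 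Put $\tilde{\mf s} = [\tilde M, \tilde \sigma]_{\tilde G}$.

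For the equivalence $\Rep(G)^{\mf s} \simeq \Rep(\tilde G)^{\tilde{\mf s}}$, I would invoke Theorem \ref{thm:4.3}(a,c): the equivalence $\overline{\zeta_m^G}$ commutes with parabolic induction and preserves supercuspidality, so it sends irreducibles with supercuspidal support in $(M,\mc O)$ to irreducibles with supercuspidal support in $(\tilde M,\tilde{\mc O})$, with the orbit matching $\mc O \leftrightarrow \tilde{\mc O}$ provided by Lemma \ref{lem:4.6}(a). For the Hecke algebras, Proposition \ref{prop:4.4} supplies a bijection $\Sigma_{\mc O,\mu} \leftrightarrow \Sigma_{\tilde{\mc O},\mu}$ satisfying $q_\alpha = q_{\tilde \alpha}$ and $q_{\alpha*} = q_{\tilde \alpha*}$, while $\zeta_m$ identifies the lattices $M_\sigma^2/M^1$ and $\tilde M_{\tilde \sigma}^2/\tilde M^1$ together with their coroots (cf.\ \eqref{eq:4.10}). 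Since $F$ and $\tilde F$ share residue fields, $q_F = q_{\tilde F}$ and hence $\lambda(\alpha) = \lambda(\tilde \alpha)$, $\lambda^*(\alpha) = \lambda^*(\tilde \alpha)$; the defining relations of $\mc H(\mc O,G)$ and $\mc H(\tilde{\mc O},\tilde G)$ are then literally identical, yielding the asserted isomorphism.

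The main obstacle lies not in any single step but in the bookkeeping needed to select $\ell$ so that all the compatibilities hold simultaneously: the depth condition on $\Rep(G)^{\mf s}$ and its parabolic cousins, the level at which Theorems \ref{thm:4.1}--\ref{thm:4.3} apply, and the vanishing of $\mb I_F^\ell$ on the Galois action on the root datum of $\mc G$. A minor additional point to verify is that $\overline{\zeta_m^G}$ restricts to the Bernstein block level and not merely to $\Rep(\cdot,\cdot_{x,m})$, but this follows from its commutation with parabolic induction and unramified twists. Once these choices are coordinated, every ingredient of the conclusion is already packaged in the results of this section.
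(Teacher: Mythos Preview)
Your proposal is correct and follows essentially the same approach as the paper: assemble the constructions and compatibilities from Section~\ref{sec:char0}, match the lattices via $\zeta_m$ and \eqref{eq:4.10}, the root systems and $q$-parameters via Proposition~\ref{prop:4.4}, and conclude $\lambda,\lambda^*$ agree because $q_F = q_{\tilde F}$. The paper's own proof is terser because it treats the choice of $m,\ell,\tilde F,\tilde{\mc G},\tilde\sigma$ and the equivalence $\Rep(G)^{\mf s}\simeq\Rep(\tilde G)^{\tilde{\mf s}}$ as already established in the discussion preceding the corollary, and only spells out the remaining Hecke algebra isomorphism; your write-up simply makes that setup explicit.
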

\begin{proof}
It only remains to establish the isomorphism of affine Hecke algebras. From \eqref{eq:4.7} and
Theorem \ref{thm:4.2} we get the bijection $M_\sigma^2 / M^1 \to \tilde M_{\tilde \sigma}^2 / \tilde{M}^1$.
From \eqref{eq:4.10} we obtain the bijection $\Sigma_{\mc O}^\vee \to \Sigma_{\tilde{\mc O}}^\vee$.
Dualizing these two bijections, we obtain an isomorphism from the root datum underlying $\mc H (\mc O,G)$
to the root datum underlying $\mc H (\tilde{\mc O},\tilde G)$. It respects the bases because $\mc G$
and $\tilde{\mc G}$ have the same based root datum. By Proposition \ref{prop:4.4}.b the parameters
$q_\alpha, q_{\alpha*}$ are the same on both sides. As
\[
q_F = [\mf o_F : \varpi_F \mf o_F] = [\mf o_{\tilde F} : \varpi_{\tilde F} \mf o_{\tilde F}] = q_{\tilde F},
\]
also the label functions $\lambda, \lambda^*$ on both sides correspond via $\alpha \mapsto \tilde \alpha$.
\end{proof}

\section{Hecke algebra parameters for simple groups}

\subsection{Principal series of split groups} \
\label{par:split}

The affine Hecke algebras for Bernstein blocks in the principal series of split groups were 
worked out in \cite{Roc1}, under some mild assumptions on the residual characteristic
of $F$. In particular, for roots $\alpha \in \Sigma_{\mc O,\mu}$ one finds $q_\alpha = q_F$ and
$q_{\alpha*} = 1$. We will derive the same conclusion in a different way, which avoids any 
restrictions on the residual characteristic. Using a little input from \cite{BeDe} we will 
evaluate the intertwining operators
\eqref{eq:1.2} directly, which is instructive but unfortunately seems infeasible outside the 
principal series. While the results in this paragraph are not original and the kind of 
calculation is also not new, we have been unable to locate such computations in the literature in 
the generality that is required for \cite{SolLLCQ}. The closest we found is \cite[\S 3]{Cas}, 
which however applies only when the underlying characters of tori are unramified.

Let $\mc G$ be a split connected reductive $F$-group. We may assume that $\mc G$ is a Chevalley
group, so defined over $\Z$. Let $\mc T$ be a maximal $F$-split torus of $\mc G$ and write
$T = \mc T (F)$. We consider an inertial equivalence class $\mf s = [T,\sigma ]_G$, where
$\sigma$ is a character of $T$ that fulfills Condition \ref{cond:3.1}. 

For $\alpha \in \Sigma (\mc G,\mc T)$ the group $\mc M_\alpha$ is generated by
$\mc T$ and the root subgroups $\mc U_\alpha, \mc U_{-\alpha}$. It has root system
$\Sigma (\mc M_\alpha,\mc T) = \{\alpha,-\alpha\}$ and parabolic subgroups $\mc P_\alpha =
\langle \mc T, \mc U_\alpha \rangle$, $\mc P_{-\alpha} = \langle \mc T, \mc U_{-\alpha} \rangle$.
Let $u_\alpha : F \to U_\alpha$ and $u_{-\alpha} : F \to U_{-\alpha}$ be the coordinates coming
from the Chevalley model. 

We assume that $s_\alpha \cdot \sigma = \sigma$, a condition which by \eqref{eq:3.4} is 
necessary for $\sigma \in \Sigma_{\mc O,\mu}$. Then $\sigma \circ \alpha^\vee = (\sigma \circ 
\alpha^\vee)^{-1}$, so $\sigma \circ \alpha^\vee$ has order $\leq 2$ in $\Irr (F^\times)$. When 
the residual characteristic of $F$ is not 2, this implies that $\sigma \circ \alpha^\vee$ has depth 
zero. Of course the cases with $\sigma \circ \alpha^\vee$ of positive depth are more involved.

We start the search for $q_\alpha$ with elements of $I_{P_\alpha}^{M_\alpha}(\sigma \otimes \chi)$ 
that are as close as possible to fixed by the Iwahori subgroup
\[
I = u_\alpha (\mf o_F) \mc T (\mf o_F) u_{-\alpha}(\varpi_F \mf o_F).
\]
For $x \in F^\times$ we write 
\[
s_\alpha (x) = u_\alpha (-x^{-1}) u_{-\alpha}(x) u_\alpha (-x^{-1}) \in N_{M_\alpha}(T) . 
\]
It follows quickly from the Iwasawa decomposition of $M_\alpha$ that 
\[
M_\alpha = P_\alpha I \sqcup P_\alpha s_\alpha I , \quad \text{where } s_\alpha = s_\alpha (1).
\]
Consider the elements $f_1, f_s \in I_{P_\alpha}^{M_\alpha}(\sigma \otimes \chi)$ defined by
\[
\begin{array}{lllll}
\text{supp}(f_1 ) = P_\alpha I & f_1 (u_\alpha (x) t u_{-\alpha}(y)) & = &
(\sigma \chi \delta_{P_\alpha}^{1/2})(t) & x \in F, t \in T, y \in \varpi_F \mf o_F , \\
\text{supp}(f_s ) = P_\alpha s_\alpha I & f_s (u_\alpha (x) t u_{-\alpha}(y) s_\alpha) & = &
(\sigma \chi \delta_{P_\alpha}^{1/2})(t) & x \in F, t \in T, y \in \mf o_F .
\end{array}
\]
We endow $F$ with the Haar measure that gives $\mf o_F$ volume 1. We compute 
\begin{equation}\label{eq:3.33}
J_{P_{-\alpha}|P_\alpha}(\sigma \otimes \chi) f_1 (1_G) = 
\int_F f_1 (u_{-\alpha}(x)) \textup{d}x = \text{vol}(\varpi_F \mf o_F) = q_F^{-1} , \vspace{-2mm}
\end{equation}
\begin{equation}\label{eq:3.13}
\begin{aligned}
& J_{P_{-\alpha}|P_\alpha} (\sigma \otimes \chi) f_1 (s_\alpha) \; = \; 
\int_F f_s \big( u_{-\alpha}(x) s_\alpha \big) \textup{d} x \; = \\
& \int_{F^\times} f_s \big( u_\alpha (-x^{-1}) 
u_{-\alpha}(x) u_\alpha (-x^{-1}) u_\alpha (x^{-1}) s_\alpha \big) \textup{d} x \; = \\
& \int_{F^\times} f_s \big( s_\alpha (x) u_{\alpha}(x^{-1}) s_\alpha \big) \textup{d} x 
\; = \; \int_{F^\times} f_s \big( s_\alpha (x) s_\alpha u_{-\alpha}(-x^{-1}) \big) \textup{d} x \; = \\
& \int_{F^\times} f_s \big( \alpha^\vee (-x^{-1}) u_{-\alpha}(-x^{-1}) \big) \textup{d} x \; = \\
& \int_{F^\times} (\sigma \chi \delta^{1/2}_{P_\alpha}) \circ \alpha^\vee (-x^{-1}) 
f_s \big( u_{-\alpha}(-x^{-1}) \big) \textup{d} x \; = \\
& \sum_{n=1}^\infty \int_{\varpi_F^{-n} \mf o_F^\times} (\sigma \chi \delta^{1/2}_{P_\alpha}) 
\circ \alpha^\vee (-x^{-1}) \textup{d}x.
\end{aligned}
\end{equation}
As $\chi \delta^{1/2}_{P_\alpha}$ is unramified and $\sigma \circ \alpha^\vee$ is quadratic,
\begin{equation}\label{eq:3.5}
(\sigma \chi \delta_P^{1/2}) \circ \alpha^\vee |_{\mf o_F^\times} = 
\sigma \circ \alpha^\vee |_{\mf o_F^\times} \quad \text{is quadratic.}
\end{equation}
If \eqref{eq:3.5} is nontrivial, then
\begin{equation}\label{eq:3.15}
\int_{\varpi_F^{-n} \mf o_F^\times} (\sigma \chi \delta^{1/2}_{P_\alpha}) 
\circ \alpha^\vee (-x^{-1}) \textup{d}x =
(\sigma \chi \delta^{1/2}_{P_\alpha})(\varpi_F^n) \int_{\mf o_F^\times} \sigma 
\circ \alpha^\vee (-x^{-1}) \textup{d}x = 0 .
\end{equation}
In that case $J_{P_{-\alpha}|P_\alpha}(\sigma \otimes \chi) f_1 (s_\alpha) = 0$. On the other
hand, when \eqref{eq:3.5} is trivial: 
\begin{align*}
J_{P_{-\alpha}|P_\alpha}(\sigma \otimes \chi) f_1 (s_\alpha) & = 
\sum_{n=1}^\infty \int_{\varpi_F^{-n} \mf o_F^\times} (\sigma \chi)(\alpha^\vee (\varpi_F^n)) \,
\big| \alpha( \alpha^\vee (\varpi_F^n)) \big|^{1/2} \textup{d}x \\
& = \sum_{n=1}^\infty (\sigma \chi)(\alpha^\vee (\varpi_F))^n \text{vol}(\varpi_F^{-n} 
\mf o_F^\times) \, | \varpi_F^n | \\
& = \sum_{n=1}^\infty (\sigma \chi)(\alpha^\vee (\varpi_F))^n (1 - q_F^{-1}) \; = \; 
\frac{(1 - q_F^{-1})(\sigma \chi)(\alpha^\vee (\varpi_F))}{1 - (\sigma \chi)(\alpha^\vee (\varpi_F))} .
\end{align*}
Similar calculations show that
\begin{align*}
& J_{P_{-\alpha}|P_\alpha}(\sigma \otimes \chi) f_s (s_\alpha) = 1 , \\
& J_{P_{-\alpha}|P_\alpha}(\sigma \otimes \chi) f_s (1_G ) = 0 \quad \text{if }
\sigma \circ \alpha^\vee |_{\mf o_F^\times} \neq 1 , \\
& J_{P_{-\alpha}|P_\alpha}(\sigma \otimes \chi) f_s (1_G ) = 
\frac{1 - q_F^{-1}}{1 - (\sigma \chi)(\alpha^\vee (\varpi_F))} 
\quad \text{if } \sigma \circ \alpha^\vee |_{\mf o_F^\times} = 1 .
\end{align*}

\textbf{Case I: $\sigma \circ \alpha^\vee$ is unramified}\\
Here $\Rep (M_\alpha )^{\mf s}$ is isomorphic with the Iwahori-spherical Bernstein block and \\
$J_{P_{-\alpha}|P_\alpha}(\sigma \otimes \chi)$ restricts to a $\mc H (M_\alpha,I)$-homomorphism
\begin{equation}\label{eq:3.6}
I_{P_\alpha}^{M_\alpha} (\sigma \otimes \chi)^I \to I_{P_{-\alpha}}^{M_\alpha} (\sigma \otimes \chi )^I.
\end{equation}
The space $I_{P_{-\alpha}}^{M_\alpha}(\sigma \otimes \chi )^I$ has a basis $f'_1, f'_s$ where
supp$(f'_1) = P_{-\alpha} I$ and supp$(f'_s) = P_{-\alpha} s_\alpha I$. Abbreviating
$z_\alpha = (\sigma \otimes \chi) \circ \alpha^\vee (\varpi_F)$, the above calculations entail 
that the matrix of \eqref{eq:3.6} respect to the given bases is
\[
\begin{pmatrix}
q_F^{-1} & \frac{1 - q_F^{-1}}{1 - z_\alpha} \\
\frac{1 - q_F^{-1}}{z_\alpha^{-1} - 1} & 1
\end{pmatrix}.
\]
An equivalent result was obtained in \cite[Theorem 3.4]{Cas}.
Similarly one checks that $J_{P_{\alpha} | P_{-\alpha}}(\sigma \otimes \chi)$ restricts to
\[
\begin{pmatrix}
1 & \frac{1 - q_F^{-1}}{z_\alpha - 1} \\
\frac{1 - q_F^{-1}}{1- z_\alpha^{-1}} & q_F^{-1}
\end{pmatrix} : I_{P_{-\alpha}}^{M_\alpha} (\sigma \otimes \chi)^I \to 
I_{P_{\alpha}}^{M_\alpha} (\sigma \otimes \chi )^I .
\]
We find that $J_{P_\alpha | P_{-\alpha}}(\sigma \otimes \chi) J_{P_{-\alpha} | P_{\alpha}}
(\sigma \otimes \chi)$ restricts to
\begin{equation}\label{eq:3.7}
\Big( q_F^{-1} + \frac{(1 - q_F^{-1})^2}{(1 - z_\alpha)(1 - z_\alpha^{-1})} \Big) \mr{id} :
I_{P_\alpha}^{M_\alpha} (\sigma \otimes \chi)^I \to I_{P_\alpha}^{M_\alpha} (\sigma \otimes \chi)^I .
\end{equation}
We already know that $J_{P_\alpha | P_{-\alpha}}(\sigma \otimes \chi) J_{P_{-\alpha} | P_{\alpha}}
(\sigma \otimes \chi)$ is a scalar multiple of the identity on $I_{P_\alpha}^{M_\alpha}(\sigma \otimes
\chi)$, so \eqref{eq:3.7} gives that scalar. We note that \eqref{eq:3.7} has a pole at $z_\alpha = 1$
and that \eqref{eq:3.7} is zero if and only if $z_\alpha = q_F$ or $z_\alpha = q_F^{-1}$. As
$\sigma$ is unitary and $\chi \in \Hom (M_\alpha, \R_{>0})$, this is equivalent to
\begin{equation}\label{eq:3.10}
\sigma \circ \alpha^\vee = 1 \quad \text{and} \quad \chi \circ \alpha^\vee (\varpi_F) \in 
\{ q_F , q_F^{-1} \} .
\end{equation}
Since $M_\sigma^2 = T$, $h_\alpha^\vee$ generates $T / T^1$.
If $\alpha^\vee (\varpi_F^{-1}) = h_\alpha^\vee$, \eqref{eq:3.10} says that $q_\alpha = q_F$ and 
$q_{\alpha*} = 1$. If $\alpha^\vee (\varpi_F^{-1}) = 2 h_\alpha^\vee$, then \eqref{eq:3.10} means
$q_\alpha = q_F^{1/2} = q_{\alpha *}$. But in that case we can also define $X_\alpha (\chi) = 
\chi (\alpha^\vee (\varpi_F^{-1}))$ instead of $X_\alpha (\chi) = \chi (h_\alpha^\vee)$. These 
new $X_\alpha$ also form a root system, which embeds naturally in $R (\mc G,\mc T)^\vee$. From the
presentation after Corollary \ref{cor:1.1} one sees that this redefinition does not change the 
affine Hecke algebra. Hence we can achieve $q_\alpha = q_F, q_{\alpha*} = 1$ in all these cases.\\

\textbf{Case II: $\sigma \circ \alpha^\vee$ is ramified}\\
For $r \in \Z_{>0}$, $M_\alpha$ has compact open subgroups
\[
\begin{array}{lll}
J_r & = & x_\alpha (\varpi_F^r \mf o_F) \mc T (\varpi_F^r \mf o_F) x_{-\alpha}(\varpi_F^r \mf o_F) , \\
H_r & = & x_\alpha (\varpi_F^{2r-1} \mf o_F) \mc T (\varpi_F^r \mf o_F) x_{-\alpha}(\varpi_F \mf o_F) .
\end{array}
\]
Here $\mc T (\varpi_F^r \mf o_F)$ is a shorthand for the kernel of $\mc T (\mf o_F) \to
\mc T (\mf o_F / \varpi_F^r \mf o_F)$.

\begin{lem}\label{lem:3.6}
There exists $r \in \Z_{>0}$ such that $\mc T (\varpi_F^r \mf o_F) \subset \ker (\sigma)$ and
$\Rep (M_\alpha)^{\mf s}$ is a direct factor of 
\[
\Rep (M_\alpha, H_r) \cong \Mod (\mc H (M_\alpha, H_r)) .
\]
\end{lem}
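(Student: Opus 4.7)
Part (i) is immediate from the smoothness of $\sigma$: the subgroups $\mc T(\varpi_F^k \mf o_F)$ for $k \geq 1$ form a neighborhood basis of the identity in $T$, so some $r_0 \in \Z_{>0}$ satisfies $\mc T(\varpi_F^{r_0} \mf o_F) \subset \ker \sigma$. Take $r \geq r_0$, to be enlarged if needed.

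For Part (ii), observe that $\Rep(M_\alpha)^{\mf s}$ is a Bernstein direct factor of $\Rep(M_\alpha)$, and $\Rep(M_\alpha, H_r)$ is stable under the Bernstein decomposition (the $H_r$-fixed-vector functor commutes with the Bernstein central idempotents). It therefore suffices to show $\pi^{H_r} \neq 0$ for every $\pi \in \Irr(M_\alpha)^{\mf s}$. The plan is to invoke a depth-refined Casselman comparison theorem. The group
\[
H_r = x_\alpha(\varpi_F^{2r-1} \mf o_F) \, \mc T(\varpi_F^r \mf o_F) \, x_{-\alpha}(\varpi_F \mf o_F)
\]
has an Iwahori-style factorization in which the $U_{-\alpha}$-part is larger than the $U_\alpha$-part. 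For $r$ large enough relative to the depth of $\pi$, there is a natural isomorphism
\[
\pi^{H_r} \; \xrightarrow{\sim} \; J_{P_\alpha^{\mr{op}}}(\pi)^{\mc T(\varpi_F^r \mf o_F)},
\]
a direct generalization of the classical Borel--Casselman identification for the Iwahori. Now $\pi$ is a subquotient of a principal series induction, hence not supercuspidal, so $J_{P_\alpha^{\mr{op}}}(\pi) \neq 0$. By Bernstein's theory together with the assumption $s_\alpha \cdot \sigma = \sigma$, every $T$-constituent of $J_{P_\alpha^{\mr{op}}}(\pi)$ has the form $\sigma \otimes \chi$ with $\chi \in X_\nr(T)$, and Part (i) makes each such constituent trivial on $\mc T(\varpi_F^r \mf o_F)$. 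Hence $J_{P_\alpha^{\mr{op}}}(\pi)^{\mc T(\varpi_F^r \mf o_F)} = J_{P_\alpha^{\mr{op}}}(\pi) \neq 0$, and $\pi^{H_r} \neq 0$.

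The main obstacle is making the depth-refined Casselman isomorphism precise in the form needed above. One has to show that the filtration defining $H_r$ is adapted, in the sense of Bushnell's canonical-lift formalism (cf.\ \cite{Bus}), to every $\pi \in \Rep(M_\alpha)^{\mf s}$; since the depth of any such $\pi$ is bounded by the depth of $\sigma$ (parabolic induction preserves depth by \cite[Theorem 5.2]{MoPr}), enlarging $r$ beyond that depth ensures the hypothesis. As a hands-on alternative, one can construct two linearly independent $H_r$-fixed vectors in $I_{P_\alpha}^{M_\alpha}(\sigma \otimes \chi)$ supported on $P_\alpha H_r$ and $P_\alpha s_\alpha H_r$ (mimicking the functions $f_1, f_s$ introduced before \eqref{eq:3.13}, using (i) to verify well-definedness), and track their distribution among composition factors at the reducibility points via a direct analogue of the matrix computation preceding \eqref{eq:3.7}.
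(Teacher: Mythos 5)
There is a genuine gap, and it sits exactly where you park the difficulty. Your whole argument rests on the unproved assertion that $\Rep (M_\alpha, H_r)$ is a union of Bernstein blocks and that $V \mapsto V^{H_r}$ is an equivalence onto $\Mod (\mc H (M_\alpha,H_r))$ -- the latter being part of the statement of the lemma, not something you may take for granted. For an arbitrary compact open subgroup neither claim is true; the Bernstein--Deligne theorem requires $H_r$ to be (conjugate to) a subgroup that is normal in a good maximal compact subgroup and admits an Iwahori factorization. The key observation you are missing is precisely the one the paper's proof is built on: for odd $r$, $H_r$ is the conjugate by $\alpha^\vee (\varpi_F^{(r-1)/2})$ of $J_r = u_\alpha (\varpi_F^r \mf o_F) \mc T (\varpi_F^r \mf o_F) u_{-\alpha}(\varpi_F^r \mf o_F)$, which \emph{is} normal in the hyperspecial parahoric $\mc M_\alpha (\mf o_F)$. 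Bernstein--Deligne then applies to $J_r$, and everything is transported to $H_r$ by conjugation. Without this (or an equivalent verification of the hypotheses for $H_r$ itself), your first sentence of Part (ii) is unsupported and the equivalence with the Hecke algebra module category is never established.

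The second gap is the ``depth-refined Casselman comparison'' $\pi^{H_r} \cong J_{P_\alpha^{\mr{op}}}(\pi)^{\mc T (\varpi_F^r \mf o_F)}$, which you yourself flag as the main obstacle and do not prove; the surjectivity onto the Jacquet module is standard for subgroups with Iwahori factorization, but injectivity needs a genuine argument. It is also unnecessary. Once Bernstein--Deligne is in force, one does not need to test every irreducible $\pi$ in the block: it suffices that $I_{P_\alpha}^{M_\alpha}(\sigma \otimes \chi)^{J_r} \neq 0$ for all $\chi \in X_\nr (T)$, and this follows from the single case $\chi = 1$ (arrange $r$ large enough that $I_{P_\alpha}^{M_\alpha}(\sigma)^{J_r} \neq 0$, possible by smoothness) because $J_r$ is contained in $\mc M_\alpha (\mf o_F)$ and the restriction of $I_{P_\alpha}^{M_\alpha}(\sigma \otimes \chi)$ to that compact group is independent of $\chi$. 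So the correct order of operations is: verify the Bernstein--Deligne hypotheses for $J_r$, get the block containment and the equivalence for $J_r$, then conjugate to $H_r$. Your proposal inverts this and leaves both load-bearing steps unproved.
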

\begin{proof}
Choose an odd $r \in \Z_{>0}$ such that $\mc T (\varpi_F^r \mf o_F) \subset \ker (\sigma)$ 
and $I_{P_\alpha}^{M_\alpha}(\sigma)^{J_r} \neq 0$. Then $I_{P_\alpha}^{M_\alpha}(\sigma \otimes \chi
)^{J_r} \neq 0$ for any $\chi \in X_\nr (T)$ because $J_r$ is compact. Hence
\[
\Rep (M_\alpha )^{\mf s} \subset \Rep (M_\alpha, J_r). 
\]
We note that $J_r$ is a normal subgroup of the hyperspecial parahoric subgroup $\mc M_\alpha (\mf o_F)$
of $M_\alpha$. It is known from \cite{BeDe} that $\Rep (M_\alpha, J_r)$ is a direct product of 
finitely many Bernstein blocks of $\Rep (M_\alpha)$, and that
\begin{equation}\label{eq:3.9}
\Rep (M_\alpha, J_r) \to \Mod (\mc H (M_\alpha, J_r )) : V \mapsto V^{J_r}
\end{equation}
is an equivalence of categories. Consider conjugation by $\alpha^\vee (\varpi_F^{(r-1)/2})$. 
This sends  $J_r$ to $H_r$ and induces equivalences of categories
\[
\Rep (M_\alpha,J_r) \cong \Rep (M_\alpha,H_r) ,\quad 
\Mod (\mc H (M_\alpha,J_r)) \cong \Mod (\mc H (M_\alpha,H_r)) . \qedhere
\]
\end{proof}

Lemma \ref{lem:3.6} tells us that most aspects of $I_{P_\alpha}^{M_\alpha}(\sigma \otimes \chi)$
can already be detected on $I_{P_\alpha}^{M_\alpha}(\sigma \otimes \chi )^{H_r}$. 

\begin{lem}\label{lem:3.7}
The double cosets in $P_\alpha \backslash M_\alpha / H_r$ can be represented by 
\[
\{ 1_G \} \cup \{ (u_{-\alpha}(z) s_\alpha : z \in \mf o_F / \varpi_F^{2r-1} \mf o_F \}.
\]
Similarly $P_{-\alpha} \backslash M_\alpha / H_r$ can be represented by $\{s_\alpha\} \cup
\{ u_{\alpha}(\mf o_F) / u_{\alpha}(\varpi_F^{2r-1} \mf o_F)\}$.
\end{lem}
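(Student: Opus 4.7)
The plan is to prove both claims using the rank-one Bruhat decomposition of $M_\alpha$ together with two ``absorption'' identities exploiting the explicit form of $H_r$. I describe the first assertion in three steps and then indicate the symmetric argument for the second.

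\emph{Step 1 (Bruhat).} In split rank one, $M_\alpha = P_\alpha \sqcup P_\alpha s_\alpha U_\alpha$, and using $s_\alpha u_\alpha(t) s_\alpha^{-1} = u_{-\alpha}(-t)$ this can be rewritten as $P_\alpha \backslash M_\alpha = \{P_\alpha\} \sqcup \{P_\alpha u_{-\alpha}(z) s_\alpha : z \in F\}$, with each nontrivial coset having a unique parameter $z$. \emph{Step 2 (absorb $z \notin \mf o_F$).} From the paper's definition $s_\alpha(z) = u_\alpha(-z^{-1}) u_{-\alpha}(z) u_\alpha(-z^{-1})$ together with $s_\alpha(z) s_\alpha^{-1} = \alpha^\vee(z^{-1})$, a short manipulation (absorbing the central $s_\alpha^2 = \alpha^\vee(-1)$) yields the identity
\[
u_{-\alpha}(z) s_\alpha \;=\; u_\alpha(z^{-1}) \, \alpha^\vee(-z^{-1}) \, u_{-\alpha}(-z^{-1}) \qquad (z \in F^\times).
\]
When $z \notin \mf o_F$ one has $z^{-1} \in \varpi_F \mf o_F$, so the rightmost factor lies in $x_{-\alpha}(\varpi_F \mf o_F) \subset H_r$ while the left factors lie in $P_\alpha$; hence $P_\alpha u_{-\alpha}(z) s_\alpha H_r = P_\alpha H_r$, the double coset of $1_G$. \emph{Step 3 (absorb modulo $\varpi_F^{2r-1}$).} For $z \in \mf o_F$ and $z_1 \in \varpi_F^{2r-1} \mf o_F$, the relation $s_\alpha^{-1} u_{-\alpha}(z_1) s_\alpha = u_\alpha(-z_1)$ gives
\[
u_{-\alpha}(z + z_1) s_\alpha \;=\; u_{-\alpha}(z) s_\alpha \cdot u_\alpha(-z_1),
\]
with $u_\alpha(-z_1) \in x_\alpha(\varpi_F^{2r-1} \mf o_F) \subset H_r$, so the double coset depends only on the class of $z$ in $\mf o_F/\varpi_F^{2r-1}\mf o_F$. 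Combining Steps 1--3 proves the first assertion (giving a surjective set of representatives).

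For the second assertion, the analogous Bruhat decomposition gives $P_{-\alpha} \backslash M_\alpha = \{P_{-\alpha}\} \sqcup \{P_{-\alpha} u_\alpha(z) : z \in F^\times\} \sqcup \{P_{-\alpha} s_\alpha\}$, which I unify as $\{s_\alpha\} \cup \{u_\alpha(z) : z \in F\}$ (with $u_\alpha(0)=1$ giving $P_{-\alpha}$). Using $s_\alpha x_\alpha(y) s_\alpha^{-1} = x_{-\alpha}(-y)$, $s_\alpha T_r s_\alpha^{-1} = T_r \subset P_{-\alpha}$, and the inclusion $x_{-\alpha}(\varpi_F^{2r-1}\mf o_F) \subset P_{-\alpha}$, one simplifies $P_{-\alpha} s_\alpha H_r = P_{-\alpha} \cdot x_\alpha(\varpi_F \mf o_F) \cdot s_\alpha$; a short $SL_2$-matrix check then shows $u_\alpha(z)$ lies in this set precisely when $-\varpi_F^{-1} z^{-1} \in \mf o_F$, i.e.\ precisely when $z \notin \mf o_F$. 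The reduction modulo $\varpi_F^{2r-1}$ is immediate from $u_\alpha(z+z_1) = u_\alpha(z) u_\alpha(z_1)$ with $u_\alpha(z_1) \in H_r$ whenever $z_1 \in \varpi_F^{2r-1}\mf o_F$.

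The main obstacle is the matrix bookkeeping in Step 2: one must correctly track the $\alpha^\vee$-factor and the central factor $s_\alpha^2 = \alpha^\vee(-1)$ produced by the Weyl element, so that the remainder after extracting a $P_\alpha$-piece really lands in $x_{-\alpha}(\varpi_F \mf o_F)$ (and not in a strictly larger subgroup of $U_{-\alpha}$). For the second assertion, the analogous simplification of $P_{-\alpha} s_\alpha H_r$ requires similarly verifying that the torus and opposite-unipotent factors of $H_r$ get absorbed into $P_{-\alpha}$ via the Weyl conjugation relations without enlarging the $x_\alpha(\varpi_F \mf o_F)$-factor.
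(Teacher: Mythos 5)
Your argument is correct, and it reaches the statement by a slightly different route than the paper. The paper first applies the Iwasawa decomposition $M_\alpha = P_\alpha \,\mc M_\alpha(\mf o_F)$ to reduce to double cosets of the compact group $\mc M_\alpha(\mf o_F)$, and then invokes the integral Bruhat--Iwahori decomposition $\mc M_\alpha(\mf o_F) = I \sqcup I s_\alpha I$; the integrality of the parameter $z$ comes for free, and only the absorption modulo $\varpi_F^{2r-1}$ remains to be checked. You instead start from the rational Bruhat decomposition of $M_\alpha$ over $F$ and must then show that all cosets with non-integral parameter collapse into the expected cell; this is exactly what your identity $u_{-\alpha}(z)s_\alpha = u_\alpha(z^{-1})\,\alpha^\vee(-z^{-1})\,u_{-\alpha}(-z^{-1})$ accomplishes (I checked it in $SL_2$: both sides equal $\left(\begin{smallmatrix} 0 & -1 \\ 1 & -z\end{smallmatrix}\right)$), and the analogous matrix computation for $P_{-\alpha}s_\alpha H_r$ is also correct. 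The trade-off is that your route requires these explicit rank-one identities up front --- essentially the same manipulations the paper performs later when evaluating the intertwining operators in \eqref{eq:3.13} and \eqref{eq:3.14} --- whereas the Iwasawa route isolates the $p$-adic integrality at the level of the group decomposition. Both arguments, like the paper's, establish that the listed elements meet every double coset; neither addresses distinctness of the representatives, which is what the subsequent basis claim for $I_{P_\alpha}^{M_\alpha}(\sigma\otimes\chi)^{H_r}$ implicitly uses, so you are not losing anything relative to the paper there.
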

\begin{proof}
From the Iwasawa decomposition $M_\alpha = P_\alpha \mc M_\alpha (\mf o_F)$ we get
\begin{equation}\label{eq:3.11}
P_\alpha \backslash M_\alpha / H_r \cong (P_\alpha \cap \mc M_\alpha (\mf o_F)) \backslash
\mc M_\alpha (\mf o_F) / H_r .
\end{equation}
Recall that by the Bruhat decomposition of $\mc M_\alpha (k_F)$:
\begin{equation}\label{eq:3.12}
\mc M_\alpha (\mf o_F) = I \sqcup I s_\alpha I = I \sqcup u_\alpha (\mf o_F) \mc T (\mf o_F)
u_{-\alpha}(\mf o_F) s_\alpha .
\end{equation}
Furthermore, we note that $(P_\alpha \cap \mc M_\alpha (\mf o_F)) H_r = I$ and 
\[
(P_\alpha \cap \mc M_\alpha (\mf o_F)) u_{-\alpha}(z) s_\alpha H_r = 
(P_\alpha \cap \mc M_\alpha (\mf o_F)) u_{-\alpha}(z + \varpi_F^{2r-1} \mf o_F) s_\alpha 
\qquad z \in \mf o_F .
\]
In combination with \eqref{eq:3.11} and \eqref{eq:3.12} that yields the desired representatives
for \eqref{eq:3.11}.

The representatives for the second double coset space are found in analogous fashion, now using
\[
\mc M_\alpha (\mf o_F) = s_\alpha I \sqcup s_\alpha I s_\alpha I =
u_{-\alpha}(\mf o_F) \mc T (\mf o_F) u_{\alpha}(\varpi_F \mf o_F) s_\alpha \sqcup
u_{-\alpha} (\mf o_F) \mc T (\mf o_F) u_{\alpha}(\mf o_F)
\]
instead of \eqref{eq:3.12}.
\end{proof}

It follows from Lemma \ref{lem:3.7} that $I_{P_\alpha}^{M_\alpha}(\sigma \otimes \chi )^{H_r}$
has a basis $\{f_1\} \cup \{ f_{z s} : z \in \mf o_F / \varpi_F^{2r-1} \mf o_F \}$. Here
supp$(f_1) = P_\alpha H_r = P_\alpha I$ as before and 
\begin{align*}
& \text{supp} (f_{zs}) = P_\alpha u_{-\alpha} (z) H_r s_\alpha = 
P_\alpha x_{-\alpha}(z + \varpi_F^{2r-1} \mf o_F) s_\alpha , \\
& f_{zs}(u_\alpha (x) t u_{-\alpha}(y) s_\alpha ) = (\sigma \chi \delta_{P_\alpha}^{1/2})(t) 
\qquad x \in F, y \in z + \varpi_F^{2r-1} \mf o_F, t \in T .
\end{align*}
The next result can be deduced from \cite[Theorem 6.3]{Roc1} when the characteristic of $F$ 
is not 2.

\begin{prop}\label{prop:3.8}
Recall that $\sigma \circ \alpha^\vee$ is ramified and $s_\alpha \cdot \sigma = \sigma$.
\enuma{
\item The functions $J_{P_{-\alpha}|P_\alpha}(\sigma \otimes \chi) f_1$ and
$J_{P_{-\alpha}|P_\alpha}(\sigma \otimes \chi) f_{zs}$ (with $z \in \mf o_F / \varpi_F^{2r-1} \mf o_F$)
of $\chi \in X_\nr (M_\alpha)$ do not have any poles. 
\item $\alpha \notin \Sigma_{\mc O,\mu}$.
}
\end{prop}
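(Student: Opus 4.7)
Part (a) is a direct extension of the calculation for $Jf_1(s_\alpha)$ in \eqref{eq:3.13}; part (b) then follows from (a) and \eqref{eq:1.3}. The hypothesis $s_\alpha\cdot\sigma=\sigma$ forces $\sigma\circ\alpha^\vee$ to be of order $\le 2$, and ramifiedness means it is a nontrivial quadratic character on $\mf o_F^\times$. This is exactly what killed the tail sums in \eqref{eq:3.15}, and the plan is to make every required integral collapse to that same template.

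\textbf{Part (a).} The function $Jf \in I_{P_{-\alpha}}^{M_\alpha}(\sigma\otimes\chi)^{H_r}$ is determined by its values at the representatives of $P_{-\alpha}\backslash M_\alpha/H_r$ produced by Lemma \ref{lem:3.7}, namely $s_\alpha$ and $u_\alpha (y)$ for $y$ running over $\mf o_F/\varpi_F^{2r-1}\mf o_F$. For each representative $g$, I will compute
\[
Jf(g) \;=\; \int_F f(u_{-\alpha}(x)g)\,\textup{d}x
\]
by stratifying $F=\{0\}\cup\bigsqcup_{n\in\Z}\varpi_F^n\mf o_F^\times$ and, on each stratum, using the Bruhat-style identity $u_{-\alpha}(x) = u_\alpha(x^{-1})\alpha^\vee(-x^{-1})u_{-\alpha}(-x^{-1})$ to put $u_{-\alpha}(x)g$ into Iwahori form. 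The strata with $|x|$ bounded contribute only finitely many terms, each a polynomial in $\chi$. The strata with $|x|\to\infty$ contribute, after the change of variable $x=\varpi_F^{-n}u^{-1}$ with $u\in\mf o_F^\times$, a factor of the form
\[
(\sigma\chi\delta_{P_\alpha}^{1/2})(\alpha^\vee(\varpi_F^n))\cdot\int_{\mf o_F^\times}(\sigma\circ\alpha^\vee)(-u)\,\textup{d}u,
\]
times a combinatorial constant depending on the chosen $g$ and on whether $f=f_1$ or $f=f_{zs}$. The inner integral vanishes because $\sigma\circ\alpha^\vee|_{\mf o_F^\times}$ is a nontrivial character (exactly as in \eqref{eq:3.15}). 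Hence every $Jf(g)$ is a finite sum of regular expressions in $\chi$, so $Jf$ has no poles.

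\textbf{Part (b).} The analogous argument with $\alpha$ and $-\alpha$ interchanged (which is legitimate since $s_\alpha$, the hypothesis $s_\alpha\cdot\sigma=\sigma$ and the ramified-quadratic property of $\sigma\circ\alpha^\vee=(\sigma\circ(-\alpha^\vee))^{-1}$ are symmetric in $\pm\alpha$) shows that $J_{P_\alpha|P_{-\alpha}}(\sigma\otimes\chi)$, restricted to the $H_r$-fixed subspace $I_{P_{-\alpha}}^{M_\alpha}(\sigma\otimes\chi)^{H_r}$, also has no poles in $\chi$. By \eqref{eq:1.3} the composition
\[
J_{P_\alpha|P_{-\alpha}}(\sigma\otimes\chi)\circ J_{P_{-\alpha}|P_\alpha}(\sigma\otimes\chi) \;=\; \frac{\text{constant}}{\mu^{M_\alpha}(\sigma\otimes\chi)}\cdot\mathrm{id}
\]
is then holomorphic in $\chi$, so $\mu^{M_\alpha}(\sigma\otimes\chi)$ has no zeros on $\mc O$. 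By the definition \eqref{eq:1.5} of $\Sigma_{\mc O,\mu}$, this means $\alpha\notin\Sigma_{\mc O,\mu}$. (Lemma \ref{lem:3.6} guarantees that $H_r$-fixed vectors detect the Bernstein block $\mf s$, so zeros of $\mu^{M_\alpha}$ outside the $H_r$-relevant part cannot occur on $\mc O$.)

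\textbf{Main obstacle.} The substantive work is the Bruhat bookkeeping for $Jf_{zs}(g)$, where one must track, as $x$ traverses $F$, which double coset $P_\alpha u_{-\alpha}(z')H_r$ the element $u_{-\alpha}(x)g$ lands in, and extract the $\mc T(\mf o_F)$-component of its Iwahori decomposition. The point to verify is that once everything is expressed in coordinates, the only character-theoretic content is a factor $(\sigma\circ\alpha^\vee)(u)$ integrated over $\mf o_F^\times$, multiplied by an unramified function of $\chi$. Any residual dependence of the unramified factor on $\chi$ is polynomial (coming from finitely many $n$'s that are not annihilated by \eqref{eq:3.15}), and thus yields no poles.
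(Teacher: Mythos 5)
Your proposal is correct and follows essentially the same route as the paper's proof: reduce to the double coset representatives of Lemma \ref{lem:3.7}, evaluate the intertwining integrals via the Bruhat identity stratum by stratum, kill the divergent tails using that $\sigma\circ\alpha^\vee|_{\mf o_F^\times}$ is a nontrivial (ramified quadratic) character as in \eqref{eq:3.15}, and deduce (b) from the symmetric statement for $J_{P_\alpha|P_{-\alpha}}$, Lemma \ref{lem:3.6} and \eqref{eq:1.3}. One small imprecision: for $J_{P_{-\alpha}|P_\alpha}(\sigma\otimes\chi)f_1(u_\alpha(y))$ with $y\neq 0$ the paper's computation shows (after the substitution $x'=1+xy$) that the integrand is supported on the compact set $|x'|=1$, so there is no infinite tail at all and your ``combinatorial constant'' is zero there; the character-orthogonality mechanism is genuinely needed only for $f_1$ at $s_\alpha$ and for $f_{zs}$ at $u_\alpha(y)$ with $-y\in z+\varpi_F^{2r-1}\mf o_F$.
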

\begin{proof}
(a) Note that $J_{P_{-\alpha}|P_\alpha}(\sigma \otimes \chi)$ preserves the $H_r$-invariance
of an element $f$ of the given basis. By Lemma \ref{lem:3.7} it suffices to check the values of
$J_{P_{-\alpha}|P_\alpha}(\sigma \otimes \chi) f$ at $\{s_\alpha\} \cup u_\alpha (\mf o_F)$.
From the earlier computations \eqref{eq:3.33} and \eqref{eq:3.13} we know that $J_{P_{-\alpha}|
P_\alpha} (\sigma \otimes \chi) f_1$ does not have poles at $1_G$ or at $s_\alpha$. 
For $y \in \mf o_F \setminus \varpi_F^{2r-1} \mf o_F$ the multiplication rules in $SL_2 (F)$ 
(which surjects on $M_{\alpha,\der}$) enable us to compute
\begin{equation}\label{eq:3.14}
\begin{aligned}
J_{P_{-\alpha}|P_\alpha}(\sigma \otimes \chi) f_1 (u_\alpha (y)) 
& = \int_F f_1 \big( u_{-\alpha}(x) u_\alpha (y) \big) \textup{d}x \\
& = \int_F f_1 \big( u_\alpha (\frac{y}{1 + xy}) \alpha^\vee (\frac{1}{1 + xy}) 
u_{-\alpha}(\frac{x}{1 + xy}) \big) \textup{d} x \\
& = \int_F \big( \sigma \chi \delta_{P_\alpha}^{1/2})(\alpha^\vee (\frac{1}{1 + xy}) \big)
f_1 \big( u_{-\alpha}(\frac{x}{1 + xy}) \big) \textup{d} x .
\end{aligned}
\end{equation}
In terms of the new variable $x' := 1 +xy$ this becomes
\[
|y|^{-1} \int_F \big( \sigma \chi \delta_{P_\alpha}^{1/2})(\alpha^\vee (x')^{-1} \big)
f_1 \big( u_{-\alpha}(\frac{x' -1 }{y x'}) \big) \textup{d} x' 
\]
The integrand is nonzero if and only if $\frac{x' - 1}{y x'} \in \varpi_F \mf o_F$, which is equivalent to 
\[
(x' - 1) / x' \in y \varpi_F \mf o_F \subset \varpi_F \mf o_F. 
\]
That is only possible when $|x'| = 1$, so \eqref{eq:3.14} becomes an integral of a continuous function 
over the compact set $\mf o_F^\times$. In particular it converges and 
$J_{P_{-\alpha}|P_\alpha}(\sigma \otimes \chi) f_1$ does not have any poles.

With calculations as in \eqref{eq:3.13} we check the other basis elements $f_{zs}$:
\begin{align}
\nonumber J_{P_{-\alpha}|P_\alpha}(\sigma \otimes \chi) f_{zs} (s_\alpha) 
& = \int_F f_{zs} \big( u_{-\alpha}(x) s_\alpha \big) \textup{d} x = 
\text{vol}(z + \varpi_F^{2r-1} \mf o_F) = q_F^{1-2r} , \\
\nonumber J_{P_{-\alpha}|P_\alpha} (\sigma \otimes \chi) f_{zs} (u_\alpha (y) ) 
& = \int_F f_{zs} \big( u_{-\alpha}(x) u_\alpha (y) \big) \textup{d} x \\
\nonumber & = \int_{F^\times} f_{zs} \big( u_\alpha (-x^{-1}) u_{-\alpha}(x) u_\alpha (-x^{-1}) 
u_\alpha (y + x^{-1}) \big) \textup{d} x \\
& = \int_{F^\times} f_{zs} \big( s_\alpha (x) u_\alpha (y + x^{-1}) \big) \textup{d} x \\
\nonumber & = \int_{F^\times} (\sigma \chi \delta_{P_\alpha}^{1/2}) \big( \alpha^\vee (x^{-1}) \big)
f_{zs} \big( s_\alpha u_\alpha (y + x^{-1}) \big) \textup{d} x \\
\nonumber & = \int_{F^\times} (\sigma \chi \delta_{P_\alpha}^{1/2}) \big( \alpha^\vee (x^{-1}) \big)
f_{zs} \big( s_\alpha (x) u_\alpha (y + x^{-1}) \big) \textup{d} x \\
\nonumber & = \int_{F^\times} (\sigma \chi \delta_{P_\alpha}^{1/2}) \big( \alpha^\vee (x^{-1}) \big)
f_{zs} \big( u_{-\alpha}(-y - x^{-1}) s_\alpha \big) \textup{d}x .
\end{align}
When $-y \notin z + \varpi_F^{2r-1} \mf o_F$, this integral is supported on a compact subset of $F$,
and it converges. When $-y \in z + \varpi_F^{2r-1} \mf o_F$, the support condition on $x$ becomes
$|x| \geq q_F^{2r-1}$, and the integral reduces to 
\[
\sum_{n = 2r-1}^\infty \int_{\varpi_F^{-n} \mf o_F^\times}  
(\sigma \chi \delta_{P_\alpha}^{1/2})(\alpha^\vee (x^{-1})) \textup{d} x.
\]
Since $\sigma \circ \alpha^\vee$ is ramified and quadratic, it is nontrivial on $\mf o_F^\times$.
Then \eqref{eq:3.5} and \eqref{eq:3.15} show that every term of the above sum is zero. We conclude that
$J_{P_{-\alpha}|P_\alpha}(\sigma \otimes \chi) f_{zs}$ also does not have any poles. \\
(b) Part (a) and Lemma \ref{lem:3.7} show that $J_{P_{-\alpha}|P_\alpha}(\sigma \otimes \chi)$ does not
have any poles on $I_{P_\alpha}^{M_\alpha}(\sigma \otimes \chi )^{H_r}$. Similar computations (which
we omit) show that $J_{P_{\alpha}|P_{-\alpha}}(\sigma \otimes \chi)$ does not
have any poles on $I_{P_{-\alpha}}^{M_\alpha}(\sigma \otimes \chi )^{H_r}$. By Lemma \ref{lem:3.6}
they neither have poles on, respectively, $I_{P_\alpha}^{M_\alpha}(\sigma \otimes \chi )$ and 
$I_{P_{-\alpha}}^{M_\alpha}(\sigma \otimes \chi )$. Then \eqref{eq:1.3} says that
$\mu^{M_\alpha}(\sigma \otimes \chi)$ is nonzero for $\chi \in X_\nr (T)$, which by definition
means $\alpha \notin \Sigma_{\mc O,\mu}$.
\end{proof}

Let us combine the conclusions for all possible $\sigma \circ \alpha^\vee$:

\begin{thm}\label{thm:3.9}
Suppose that $\alpha \in \Sigma_{\mc O,\mu}$, for a principal series Bernstein component of a
$F$-split group $G$. Define $X_\alpha (\chi) = \chi (\alpha^\vee (\varpi_F^{-1}))$. 
Then $\sigma \circ \alpha^\vee = 1$ and $q_\alpha = q_F, q_{\alpha*} = 1$.
\end{thm}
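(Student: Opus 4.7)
The plan is to combine the two case analyses already carried out before the statement, dispatching each in turn, and then to read off the $q$-parameters from the standard shape \eqref{eq:3.22} of the $\mu$-function.

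First I would invoke Proposition \ref{prop:3.8}.b to rule out the ramified case. Indeed, the hypothesis $\alpha \in \Sigma_{\mc O,\mu}$ forces $s_\alpha \cdot \sigma \cong \sigma$ by \eqref{eq:3.4}, hence $\sigma \circ \alpha^\vee$ has order at most two. Proposition \ref{prop:3.8}.b shows that if $\sigma \circ \alpha^\vee$ is ramified, then $\mu^{M_\alpha}$ has no zeros on $\mc O$, contradicting $\alpha \in \Sigma_{\mc O,\mu}$. Therefore we must be in Case I, i.e.\ $\sigma \circ \alpha^\vee$ is an unramified character of order $\leq 2$.

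Next, in Case I the explicit matrix computation on Iwahori-fixed vectors produced the scalar \eqref{eq:3.7}
\[
q_F^{-1} + \frac{(1 - q_F^{-1})^2}{(1 - z_\alpha)(1 - z_\alpha^{-1})}
\qquad \text{with} \qquad z_\alpha = (\sigma \chi)(\alpha^\vee (\varpi_F)),
\]
for the composition $J_{P_\alpha | P_{-\alpha}} J_{P_{-\alpha} | P_\alpha}$. By \eqref{eq:1.3} this is a nonzero multiple of $\mu^{M_\alpha}(\sigma \otimes \chi)^{-1}$, so the zeros of $\mu^{M_\alpha}$ on $\mc O$ coincide with the zeros of \eqref{eq:3.7}. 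A short algebraic simplification gives the equation $z_\alpha + z_\alpha^{-1} = q_F + q_F^{-1}$, whose only solutions are $z_\alpha \in \{q_F, q_F^{-1}\}$. Since $\sigma$ is unitary and we may restrict to $\chi \in \Hom(M_\alpha, \R_{>0})$ when testing real reducibility points, $z_\alpha$ is a positive real; this forces $\sigma(\alpha^\vee(\varpi_F)) = 1$, and because $\sigma \circ \alpha^\vee$ is unramified we conclude $\sigma \circ \alpha^\vee = 1$.

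Finally, with the definition $X_\alpha(\chi) = \chi(\alpha^\vee (\varpi_F))$, the reducibility points of the family $\{I_{P_\alpha}^{M_\alpha}(\sigma \otimes \chi)\}$ are exactly $\{X_\alpha = q_F\}$ (and its inverse), with no reducibility at $X_\alpha = -q_{\alpha*}$ for any $q_{\alpha*}>1$. Comparing with the Harish-Chandra shape \eqref{eq:3.22} then yields $q_\alpha = q_F$ and $q_{\alpha*} = 1$. The only subtle point, and the place where one must be careful, is the change of normalization from $h_\alpha^\vee$ to $\alpha^\vee(\varpi_F)$: as already noted on page \pageref{eq:1.6}, these differ at most by a factor $2$, and replacing one by the other only rescales $X_\alpha$ without altering the affine Hecke algebra, so the claimed values of the parameters are genuinely realized with respect to the stated $X_\alpha$.
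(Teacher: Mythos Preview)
Your proof follows the same approach as the paper: dispatch the ramified case via Proposition~\ref{prop:3.8}.b, then in the unramified case read off the parameters from the explicit scalar \eqref{eq:3.7}, and finally absorb the possible discrepancy between $h_\alpha^\vee$ and $\alpha^\vee(\varpi_F)$ into the normalization of $X_\alpha$.

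There is one slip worth fixing. You write that ``the zeros of $\mu^{M_\alpha}$ on $\mc O$ coincide with the zeros of \eqref{eq:3.7}.'' Since \eqref{eq:3.7} is a nonzero multiple of $\mu^{M_\alpha}(\sigma\otimes\chi)^{-1}$, the zeros of \eqref{eq:3.7} are the \emph{poles} of $\mu^{M_\alpha}$, not its zeros. What you actually compute---the values $z_\alpha \in \{q_F, q_F^{-1}\}$ and the resulting reducibility points---are precisely these poles (cf.\ Corollary~\ref{cor:1.1}), so the subsequent reasoning and the conclusion $q_\alpha = q_F$, $q_{\alpha*} = 1$ are unaffected. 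Once that sentence is corrected, the argument matches the paper's.
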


\subsection{Principal series of quasi-split groups} \
\label{par:quasisplit}

We consider a quasi-split non-split connected reductive $F$-group $\mc G$. By Section
\ref{sec:red} we may suppose that $\mc G$ is absolutely simple. Then it is an outer form of Lie
type $A_n, D_n$ or $E_6$. 

Let $\mc T$ be the centralizer of a maximal $F$-split torus $\mc S$ in $\mc G$, and let
$\sigma$ be a character of $T$ satisfying Condition \ref{cond:3.1}. Let Gal$(F_s / \tilde F)$ 
be the normal subgroup of Gal$(F_s/ F)$ that acts trivially on $X^* (\mc T)$, so that 
$\tilde F / F$ is a minimal Galois extension splitting $\mc T$. 

Consider a root $\alpha \in \Sigma_{\mc O,\mu}$. By a suitable choice of a basis of 
$\Sigma (G,S) \subset \Sigma (G,A_T)$, we may assume that $\alpha$ is simple. It corresponds to a 
unique Galois orbit $\mb W_F \alpha_T$ in $\Sigma (\mc G, \mc T)$. Then
\begin{equation}\label{eq:3.28}
U_\alpha (F) = \Big( \prod_{\beta_T \in \mb W_F \alpha_T} U_{\beta_T} (F_s) \Big)^{\mb W_F} 
\cong U_{\alpha_T}(F_s )^{\mb W_{F,\alpha_T}} \cong F_s^{\mb W_{F,\alpha_T}} =: F_\alpha .
\end{equation}
The field $F_\alpha$ does not depend on the choice of $\alpha_T$ (up to isomorphism) and is
known as a splitting field for $\alpha$.

By construction the numbers $q_\alpha, q_{\alpha*}$ depend only on the group $M_\alpha$. 
Parts (b--c) of Proposition \ref{prop:2.4} apply, so
we may even replace $\mc M_\alpha$ by its derived subgroup $\mc M_{\alpha,\der}$.

Suppose for the moment that the elements of $\mb W_F \alpha_T \subset \Sigma (\mc G,\mc T)$ are
mutually orthogonal. Then $\mc M_{\alpha,\der}$ is isomorphic to the restriction of scalars,
from $F_\alpha$ to $F$, of $SL_2$ or $PGL_2$. Now $q_\alpha$ and
$q_{\alpha *}$ can be computed in $SL_2 (F_{\alpha})$ or $PGL_2 (F_{\alpha})$, as in Paragraph
\ref{par:split}. (Recall that even for $PGL_2$ we insisted that $X_\alpha$ is based on $\alpha^\vee$
rather than on $h_\alpha^\vee$.) 
By Theorem \ref{thm:3.9} $\sigma \circ \alpha^\vee = 1$, $q_{\alpha *} = 1$
and $q_\alpha = q_{F_{\alpha}}$ is the cardinality of the residue field of $F_{\alpha_T}$.
From Galois theory for local fields \cite{Ser} it is known that
\begin{multline}
|\mb W_F \alpha_T| = [\mb W_F : \mb W_{F,\alpha_T}] = e_{F_{\alpha} / F} f_{F_{\alpha} / F} = \\
[\mb I_F : \mb I_F \cap \mb W_{F,\alpha_T}] \cdot [\mb W_F / \mb I_F : \mb W_{F,\alpha_T} 
\mb I_F / \mb I_F ] = |\mb I_F \alpha_T| \cdot f_{F_{\alpha} / F} .
\end{multline}
Since $\mb I_F$ is normal in $\mb W_F$, the number
\begin{equation}\label{eq:3.16}
q_{F_{\alpha}} = q_F^{f_{F_{\alpha} / F}} = q_F^{|\mb W_F \alpha_T | / |\mb I_F \alpha_T|}
\end{equation}
depends only on $\alpha$, and not on the choice of $\alpha_T$. This leads to the
possibilities for the Dynkin diagrams (with Galois action indicated by arrows), the relative
root systems and the $q_\alpha$ in Table \ref{fig:DD}. We stress that the parameters 
$q_\alpha$ only come into play when $\alpha \in \Sigma_{\mc O,\mu}$, for 
$\alpha \in \Sigma (A_T) \setminus \Sigma_{\mc O,\mu}$ they are not defined. Recall that the
root system underlying $\mc H (\mc O,G)$ is $\Sigma_{\mc O}^\vee$, which is a rescaled version
of $\Sigma_{\mc O,\mu}^\vee$, so obtained from the dual of the root system on the right
hand side of the table.
\begin{table}
\caption{Dynkin diagrams and parameters for quasi-split groups \label{fig:DD}}
\includegraphics[width=12cm]{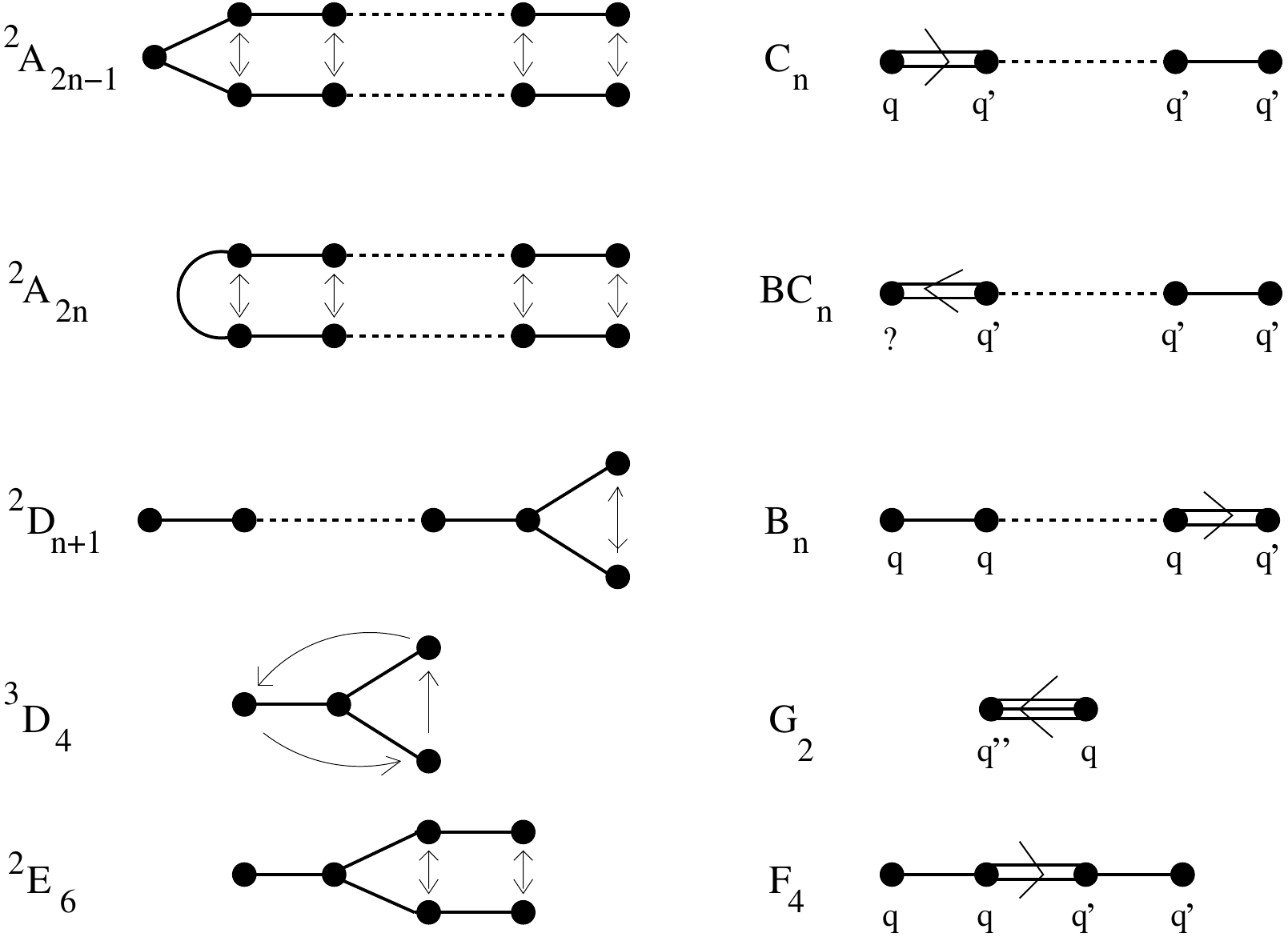}
\end{table}

In Table \ref{fig:DD} $q = q_F$ and $q' \in \{ q_F, q_F^2 \}$, according to \eqref{eq:3.16}. 
For a $F$-group of type ${}^3 D_4$, $[\tilde F : F]$ can be of degree 3 or 6. In both cases 
$[F_{\alpha} : F] = 3$ for the roots $\alpha$ not fixed by $\mb W_F$, so $q'' \in \{ q_F, q_F^3\}$. 
Thus Conjecture \ref{conj:1} holds in all these cases.

It remains to consider the case where the elements of $\mb W_F \alpha_T$ are not orthogonal. From
the above diagrams we see that this happens only once (up to Weyl group conjugacy) for absolutely 
simple groups, namely for certain pairs of roots in type ${}^2 A_{2n}$. With Proposition \ref{prop:2.4} 
we can transfer the determination of $q_\alpha$ and $q_{\alpha *}$ (which no longer needs to be 1) 
to the simply connected cover of $\mc M_{\alpha,\der}$, which is isomorphic to $SU_3$. This does not 
change the $q$-parameters, by Proposition \ref{prop:2.4}.(b--c). Because we cannot reduce the issue 
to $SL_2$ or $PGL_2$, the necessary computations are more involved. 

With Section \ref{sec:red} we can further transfer these computations to the $F$-group $U_3$, which is 
a little easier. Indeed, for that group all the Hecke algebras were computed by means of types by the 
author's PhD student Badea \cite{Bad}. In particular, it was shown in \cite[\S 2.7 and \S 5.2.1]{Bad}  
that only the following possibilities can arise:
\begin{itemize}
\item[(i)] $q_\alpha = q_{F_\alpha} = q_F, q_{\alpha*} = 1$,
\item[(ii)] $q_\alpha = q_F, q_{F_\alpha} = q_F^2, q_{\alpha^*} = 1$,
\item[(iii)] $q_\alpha = q_{F_\alpha} = q_F^2, q_{\alpha*} = q_F$.
\end{itemize}
The option (i) leads to an affine Hecke algebra with all $q_\alpha$ for $\alpha \in \Sigma_{\mc O,\mu}$
equal, which occurs in Table \ref{fig:1}. In case (ii) the connected component $\Sigma^\vee_{\mc O,j}$ 
of $\Sigma_{\mc O}^\vee$ containing $h_\alpha^\vee$ has type $B_m$ (for some $m \leq n$) and 
$q_\beta = q_F^2$ for all other simple roots in $\Sigma^\vee_{\mc O,j}$. The possibility (iii) arises 
only from the Iwahori-spherical principal series. The latter consists of unipotent representations, 
so that Conjecture \ref{conj:1} is automatic.

We have to be a little careful, because it is assumed in \cite{Bad} that the residual characteristic of 
$F$ is not 2. For the Iwahori-spherical principal series that is not a problem, those affine Hecke
algebras are known from \cite{Bor} regardless of the residual characteristic. For ramified characters
of $T \subset U_3 (F)$ it is troublesome, because some computations in \cite{Bad} change 
substantially in residual characteristic 2. To be sure in those cases as well we refer to Theorem
\ref{thm:3.2}, where all the $q$-parameters for $U_n (F)$ are computed in a different way (for 
arbitrary $F$ but with much heavier machinery). From Lemma \ref{lem:3.10} one sees that the only 
options for $U_3 (F)$ in residual characteristic 2 are still (i), (ii) and (iii).

Let us state the above conclusions concisely:

\begin{cor}\label{cor:3.12}
Conjecture \ref{conj:1} holds for all Bernstein blocks in the principal series of a quasi-split
connected reductive group $G$ over $F$. When we base $X_\alpha$ on $\alpha^\vee$, $q_{\alpha*} = 1$
and $q_\alpha = q_{F_\alpha}$ (except for one root in type ${}^2 A_{2n}$). 
\end{cor}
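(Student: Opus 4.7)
The plan is to assemble the corollary from three ingredients already in place: the reduction machinery of Section \ref{sec:red}, Theorem \ref{thm:3.9} for split groups, and the direct computations in \cite{Bad} (supplemented by Theorem \ref{thm:3.2} in residual characteristic $2$). First I would invoke Corollary \ref{cor:2.6} to reduce to the case where $\mc G$ is absolutely simple and simply connected; since $\mc G$ is quasi-split and non-split, it is an outer form of type $A_n$, $D_n$ or $E_6$, so the possible Dynkin diagrams with Galois action are exactly those listed in Table \ref{fig:DD}. Fix a minimal Levi $\mc T = Z_{\mc G}(\mc S)$ and $\sigma \in \Irr (T)$ as in Condition \ref{cond:3.1}, and pick a simple $\alpha \in \Sigma_{\mc O,\mu}$ with associated Galois orbit $\mb W_F \alpha_T$ and splitting field $F_\alpha$ as in \eqref{eq:3.28}.

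Next I would split into two cases according to whether the roots in $\mb W_F \alpha_T$ are mutually orthogonal. In the orthogonal case, Proposition \ref{prop:2.4}(b,c) lets me pass to $\mc M_{\alpha,\der}$, which is then the restriction of scalars from $F_\alpha$ to $F$ of $SL_2$ or $PGL_2$. Computing $q_\alpha, q_{\alpha*}$ inside $M_{\alpha,\der}$ is therefore the same as computing them in the $F_\alpha$-split group $SL_2 (F_\alpha)$ or $PGL_2 (F_\alpha)$, and Theorem \ref{thm:3.9} applied over $F_\alpha$ (after setting $X_\alpha$ based on $\alpha^\vee$ as justified in Case I of Paragraph \ref{par:split}) gives $\sigma \circ \alpha^\vee = 1$, $q_{\alpha*} = 1$, and $q_\alpha = q_{F_\alpha}$. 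The identity \eqref{eq:3.16} shows that $q_{F_\alpha}$ depends only on $\alpha$ and equals $q_F^{|\mb W_F \alpha_T|/|\mb I_F \alpha_T|}$, which is a power of $q_F$, yielding Conjecture \ref{conj:1}(i) in this case. Running through the diagrams in Table \ref{fig:DD} and reading off $|\mb W_F \alpha_T|/|\mb I_F \alpha_T|$ for each Galois orbit of simple roots then matches the labels to Table \ref{fig:1}, confirming Conjecture \ref{conj:1}(ii).

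The non-orthogonal case is the only real obstacle, and by inspection of the diagrams it occurs exclusively for the pair of simple roots meeting at the end of the diagram in type ${}^2 A_{2n}$. Here Proposition \ref{prop:2.4}(b,c) reduces the question to the simply connected cover of $\mc M_{\alpha,\der}$, which is $SU_3$; passing further via Corollary \ref{cor:2.7} to the more tractable $U_3$, one invokes the explicit computation of \cite[\S 2.7, \S 5.2.1]{Bad}, which enumerates the three possibilities $(q_\alpha,q_{\alpha*}) \in \{(q_F,1),(q_F,1),(q_F^2,q_F)\}$, distinguishing cases via the restriction of $\sigma$ to the unique maximal compact torus. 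Each of these fits case $B_n$ or $A_n$ of Table \ref{fig:1}, so Conjecture \ref{conj:1} follows. The one subtlety is that \cite{Bad} assumes residual characteristic different from $2$, so for residual characteristic $2$ and $\sigma$ ramified I would appeal to Theorem \ref{thm:3.2} together with Lemma \ref{lem:3.10} to conclude that the only options for $U_3 (F)$ remain $(\mr{i}),(\mr{ii}),(\mr{iii})$. The Iwahori-spherical subcase is independent of residual characteristic by \cite{Bor}.

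Finally, collecting the conclusions of both cases gives the uniform statement: $q_{\alpha*} = 1$ and $q_\alpha = q_{F_\alpha}$ whenever $\alpha \in \Sigma_{\mc O,\mu}$ lies outside the distinguished orbit in ${}^2 A_{2n}$, while on that orbit one of the three listed possibilities occurs; in every case the label functions $(\lambda,\lambda^*)$ on each irreducible component of $\Sigma_{\mc O,j}^\vee$ coincide with an entry of Table \ref{fig:1}, so Conjecture \ref{conj:1} holds. The only genuinely new computation required beyond citations is verifying that the type-$B_m$ component predicted in case (ii) for $U_3$ propagates correctly to all rank components of $\Sigma_{\mc O}^\vee$ in the ambient ${}^2 A_{2n}$; this propagation is immediate from the $W(M,\mc O)$-invariance of $q_\alpha,q_{\alpha*}$ recorded after \eqref{eq:3.25}, which forces the labels to be constant along $W(\Sigma_{\mc O,\mu})$-orbits.
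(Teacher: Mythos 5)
Your proposal is correct and follows essentially the same route as the paper: the paper's own proof of Corollary \ref{cor:3.12} simply combines the computations of Paragraph \ref{par:quasisplit} (reduction to an absolutely simple simply connected group, the orthogonal-orbit case via restriction of scalars and Theorem \ref{thm:3.9} over $F_\alpha$, the identity \eqref{eq:3.16}, and the ${}^2A_{2n}$ exception via $SU_3 \to U_3$ and \cite{Bad}, with Theorem \ref{thm:3.2} and Lemma \ref{lem:3.10} covering residual characteristic $2$) with the reduction machinery of Section \ref{sec:red}, exactly as you describe. The only cosmetic slip is your list of the three $U_3$ possibilities, where cases (i) and (ii) differ in the value of $q_{F_\alpha}$ rather than in the pair $(q_\alpha, q_{\alpha*})$; this does not affect the argument.
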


\subsection{Inner forms of Lie type $A_n$} \
\label{par:A}

We consider simple $F$-groups $\mc G$ that are inner forms of a split group of type $A_{n-1}$.
The simply connected cover of $\mc G$ is an inner form of $SL_n$, so isomorphic to the derived
subgroup of an inner form of $GL_n$. In view of Section \ref{sec:red} it suffices to consider
the latter case, so with $G$ isomorphic to $GL_m (D)$ for a division algebra $D$ with centre $F$
and $\dim_F (D) = (n/m)^2$.

For every Bernstein block $\Rep (G)^{\mf s}$ there exists a type $(J,\rho)$ \cite{SeSt6}. 
We can write $\mf s = [M,\sigma]_G$ in the form
\[
M = \prod\nolimits_i GL_{m_i} (D)^{e_i}, \sigma = \boxtimes_i \sigma_i^{\otimes e_i} , 
\]
where the various $\sigma_i$ differ by more than an unramified character. The associated Hecke 
algebra $\mc H (G,J,\rho)$ is a tensor product of affine Hecke algebras of type $GL_{e_i}$ 
\cite{SeSt4}, so the underlying root system has irreducible components of type $A_{e_i -1}$, 
for suitable $e_i \leq n$. The same result was obtained around the same time in \cite{Hei2},
using $\Pi^{\mf s}$.
The parameters of such a type $GL_{e_i}$ affine Hecke algebra were determined explicitly in 
\cite[Th\'eor\`eme 4.6]{Sec}, they are of the form $q_\alpha = q_F^f, q_{\alpha*} = 1$ for a 
specific positive integer $f$. Thus $\lambda$ and $\lambda^*$ are constant 
and equal to $f$ on the underlying root system $A_{e_i -1}$. From \cite[1.13--1.15]{Hei2} or 
\cite{SeSt6} we also see that 
\begin{equation}\label{eq:3.24}
W(M,\mc O) = W(\Sigma_{\mc O,\mu}) \cong \prod\nolimits_{e_i} S_{e_i}
\end{equation}
and $R(\mc O) = \{1\}$. From that, \eqref{eq:1.7} and 
\[
\Mod- \mc H (G,J,\rho) \cong \Rep (G)^{\mf s} \cong \End_G (\Pi^{\mf s}) -\Mod
\]
we deduce that $\mc H (G,J,\rho)$ is Morita equivalent with $\mc H (\mc O ,G )^{op}$. These are 
both affine Hecke algebras, and then Morita equivalence implies that $\mc H (G,J,\rho)$ and
$\mc H (\mc O ,G)$ and $\mc H (\mc O ,G)^{op}$ are isomorphic. We summarise:

\begin{thm}\label{thm:3.1} \textup{[Heiermann, S\'echerre--Stevens]} \\
Let $\mc G$ be an inner form of a simple $F$-split group of type $A_{n-1}$, and let $\mf s$ be
an inertial equivalence class for $G$. Then the root system underlying $\mc H (\mc O ,G)$ has
irreducible components of type $A_{e-1}$ with $e \leq n$. The label functions $\lambda, \lambda^*$
are constant on $A_{e-1}$, and equal to an integer $f$. 
\end{thm}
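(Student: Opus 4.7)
The plan is to reduce to the case $G = GL_m(D)$ for a central division algebra $D/F$ and then quote the type-theoretic results of S\'echerre and Stevens, together with S\'echerre's explicit parameter computation.

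First I would apply the reduction machinery of Section~\ref{sec:red}. Since $\mc G$ is an inner form of a split group of type $A_{n-1}$, its simply connected cover $\mc G_\Sc$ is an inner form of $SL_n$, and by Corollary~\ref{cor:2.7} it suffices to handle a connected reductive group with the same derived group, for which we can take an inner form of $GL_n$, i.e. $G = GL_m(D)$ with $\dim_F D = (n/m)^2$. Because type $A$ root systems have only one root length, only case~(i) of Proposition~\ref{prop:2.4}(d) can occur, so this reduction preserves the type of the root system underlying $\mc H(\mc O,G)$ and preserves the label functions $\lambda,\lambda^*$. It is therefore enough to prove the theorem for $G=GL_m(D)$.

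Next, given $\mf s = [M,\sigma]_G$ for such a $G$, I would invoke \cite{SeSt6} to obtain a type $(J,\rho)$ for $\mf s$, and write the supercuspidal support in the standard form
\[
M = \prod_i GL_{m_i}(D)^{e_i}, \qquad \sigma = \boxtimes_i\, \sigma_i^{\otimes e_i},
\]
where the $\sigma_i$ are pairwise inertially inequivalent. By \cite{SeSt4} the Hecke algebra $\mc H(G,J,\rho)$ decomposes as a tensor product of affine Hecke algebras of type $A_{e_i-1}$, one for each $i$. By \cite[Th\'eor\`eme~4.6]{Sec}, each such factor has parameters $q_\alpha = q_F^{f_i}$, $q_{\alpha*}=1$ for an explicitly computable positive integer $f_i$. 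Together with \eqref{eq:3.24} and the fact that $R(\mc O)=\{1\}$ in this setting, this gives an affine Hecke algebra with the predicted root system $\bigsqcup_i A_{e_i-1}$ and with constant integer labels on each component.

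Finally I would match $\mc H(G,J,\rho)$ with $\mc H(\mc O,G)$. Because $R(\mc O)=\{1\}$, the decomposition \eqref{eq:1.6} collapses to $\End_G(\Pi^{\mf s}) = \End_G^\circ(\Pi^{\mf s})$, so by \eqref{eq:1.7} there is an equivalence of module categories between $\mc H(\mc O,G)$ and $\End_G(\Pi^{\mf s})$; on the type side the analogous equivalence yields $\mr{Mod}\text{-}\mc H(G,J,\rho) \cong \Rep(G)^{\mf s}$. Combining these produces a Morita equivalence between the two affine Hecke algebras $\mc H(G,J,\rho)$ and $\mc H(\mc O,G)^{op}$, which for affine Hecke algebras with the same root datum upgrades to an isomorphism, transferring the parameters. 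The main obstacle I anticipate is simply bookkeeping: checking that the identification of simple roots and of the lattice $M_\sigma^2/M^1$ with $X_*(T)$ in the present framework matches the conventions of \cite{SeSt4,Sec}, so that the integer $f$ computed there coincides with our $\lambda(\alpha) = \lambda^*(\alpha)$. Once this dictionary is set, the conclusion is immediate.
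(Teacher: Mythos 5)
Your proposal follows the paper's proof essentially verbatim: reduce to $G=GL_m(D)$ via Section \ref{sec:red}, quote S\'echerre--Stevens for the type and the tensor-product decomposition of $\mc H (G,J,\rho)$ into type $GL_{e_i}$ factors with parameters $q_F^{f_i}$ from \cite[Th\'eor\`eme 4.6]{Sec}, and identify $\mc H (G,J,\rho)$ with $\mc H (\mc O,G)^{op}$ by combining the two module-category equivalences, using $W(M,\mc O)=W(\Sigma_{\mc O,\mu})$ and $R(\mc O)=\{1\}$. One caveat: your justification that the reduction step preserves the labels because ``only case (i) of Proposition \ref{prop:2.4}(d) can occur for type $A$'' is not valid for rank-one components, since $A_1=C_1$; Example \ref{ex:A} of the paper exhibits exactly case (iii) for an $A_1$ component under $SL_4 (F)\to GL_4 (F)$, where $\lambda^*$ drops to $0$. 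This does not change the substance (the paper's own proof reduces to $GL_m (D)$ without further comment, and remarks after Corollary \ref{cor:3.11} that Proposition \ref{prop:2.4}.d may still have to be applied to get the precise parameters for isogenous groups), but as written that sentence of your argument is false.
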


We note that such parameters already occur for Iwahori-spherical representations. Namely, consider
$GL_m (D)$ where $\dim_F (D) = f^2$. Its Iwahori--Hecke algebra is isomorphic with
an affine Hecke algebra of type $GL_e$ with parameters $q_F^f$.

More explicit information about $f$ comes from \cite[Introduction]{SeSt6}. Every type
$GL_e$ affine Hecke algebra as above comes from a supercuspidal representation $\pi^{\boxtimes e}$
of $GL_{m'/e}(D)^e$ for some $m' \leq m$. Then $f$ equals the torsion number
\[
t_\pi = | X_\nr (GL_{m'/e}(D),\pi)|
\] 
times the reducibility number $s_\pi$. The number $s_\pi$ detects when the (normalized) 
parabolic induction of 
\[
\pi \boxtimes \pi \nu^{s_\pi} \in \Irr \big( GL_{m'/e}(D)^2 \big)
\] 
is a reducible representation of $GL_{2m'/e}(D)$, where $\nu$ stands for $| \cdot |_F$ 
composed with the reduced norm map $GL_{m'/e}(D) \to F^\times$. 
With the Jacquet--Langlands correspondence \cite{Badu,DKV}
one can relate the torsion and reducibility numbers of $\pi$ to the same numbers for a specific
discrete series representation $JL (\pi)$ of $GL_{n m'/e m}(F)$. More information about those
numbers is already known from \cite{BeZe,BuKu1}. From that or from a comparison with Langlands 
parameters as in \cite[p. 57]{AMS3}, one sees that $s_\pi$ divides ${\displaystyle \frac{n m'}{m e}}$ 
and that $t_\pi$ divides ${\displaystyle \frac{n m'}{m e s_\pi}}$. Therefore
\begin{equation}\label{eq:3.26}
f = s_\pi t_\pi \quad \text{divides} \quad \frac{n m'}{m e} \leq \frac{n}{e} .
\end{equation}
We note that in all these cases $\alpha^\vee$ generates $H_M (M_\alpha^1 / M^1)$, because
the derived groups are simply connected. The torsion number $t_\pi$ says precisely that
\[
H_M (M_\sigma^2 \cap M_\alpha^1 / M^1) = \Z t_\pi \alpha^\vee .
\]
Consider a $F$-split connected reductive group $\mc M_\alpha$ with root system of type
$A_{n+m-1}$. Let $\mc M$ be the standard $F$-Levi subgroup of $\mc M_\alpha$ obtained by
omitting a simple root $\alpha$, with root system of type $A_{n-1} \times A_{m-1}$.
Then the simply connected cover of $M_\der$ is isomorphic to $SL_n (F) \times SL_m (F)$. 

Put $\mf s = [M,\sigma]_{M_\alpha}$ for some $\sigma \in \Irr_\cusp (M)$.
The inflation of $\sigma |_{M_\der}$ to the simply connected cover $M_\Sc$ of $M_\der$ 
can be written as a finite direct sum 
\[
\bigoplus\nolimits_i \sigma_i \boxtimes \sigma'_i \text{ with }
\sigma_i \in \Irr_\cusp (SL_n (F)), \sigma'_i \in \Irr_\cusp (SL_m (F)) . 
\] 
From Theorem \ref{thm:3.1}, \eqref{eq:3.26} and Section \ref{sec:red} we obtain the 
following criterion for Hecke algebra parameters in split type A groups:

\begin{cor}\label{cor:3.11}
Let $M_\alpha$, $M$ and $\sigma$ be as above.
\enuma{
\item If $n \neq m$, then $s_\alpha$ does not give rise to an element of $N_{M_\alpha}(M)/M$.
\item Suppose that $n = m$ and that, for any $i$, $\sigma_i$ and $\sigma'_i$ are not
isomorphic. Then $s_\alpha$ does not give rise to an element of $W (M,\mc O)$.
\item Suppose that $n = m$ and that, for at least one $i$, $\sigma_i$ and $\sigma'$
are isomorphic. Then $\Sigma_{\mc O,\mu} = \{\alpha,-\alpha\}$ and $s_\alpha$ gives rise to an 
element of $W(M,\mc O)$ that exchanges the two almost direct simple factors of $M_\der$. 

When $M_\alpha = GL_{2n}(F)$, the $q$-parameters for $\mc H (\mc O,M_\alpha)$ are $q_{\alpha*} = 1$ 
and $q_\alpha = q_F^f$. Here $f$ is the torsion number $t_{\sigma_i} \in \Z_{>0}$, which divides $n$.
}
\end{cor}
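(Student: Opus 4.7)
The strategy is to combine Theorem \ref{thm:3.1} (S\'echerre--Stevens) with the isogeny reductions of Section \ref{sec:red}. By Corollary \ref{cor:2.7} the relevant data (root system, Weyl groups, $q$-parameters) are insensitive to replacing $\mc M_\alpha$ by $GL_{n+m}$, so after this reduction we realise $M = GL_n(F) \times GL_m(F)$ as the standard block-diagonal Levi inside $GL_{n+m}(F)$ and write $\sigma = \sigma_L \otimes \sigma_R$ with $\sigma_L \in \Irr_\cusp(GL_n(F))$ and $\sigma_R \in \Irr_\cusp(GL_m(F))$.

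For (a), the relative Weyl group $W(M_\alpha, M) = N_{GL_{n+m}(F)}(M)/M$ identifies with $N_{S_{n+m}}(S_n \times S_m)/(S_n \times S_m)$, which is trivial when $n \neq m$: no permutation of $\{1,\dots,n+m\}$ outside $S_n \times S_m$ can normalise it, since no permutation can send a block of size $n$ to a block of size $m$. Hence $s_\alpha$ does not give rise to any element of $N_{M_\alpha}(M)/M$. For (b) and the combinatorial part of (c) we take $n = m$, so $W(M_\alpha, M) \cong \Z/2\Z$ is generated by the block-swap $s_\alpha$, and $s_\alpha \in W(M, \mc O)$ unpacks directly to $\sigma_L \cong \chi \sigma_R$ for some unramified character $\chi$ of $GL_n(F)$. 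By Clifford theory for $SL_n \subset GL_n$, this is equivalent to $\sigma_L|_{SL_n}$ and $\sigma_R|_{SL_n}$ sharing an irreducible component, i.e., to the existence of some $i$ with $\sigma_i \cong \sigma'_i$ in the decomposition $\sigma|_{M_\Sc} = \bigoplus_i \sigma_i \boxtimes \sigma'_i$. Thus (b) follows by contrapositive; in case (c), $s_\alpha \in W(M, \mc O)$ combined with $\Sigma_\red(A_M) = \{\alpha, -\alpha\}$ and the standard Bernstein--Zelevinsky reducibility of $I_P^{GL_{2n}}(\sigma_L \otimes \sigma_L \nu)$ force $\Sigma_{\mc O,\mu} = \{\alpha,-\alpha\}$ via the decomposition \eqref{eq:3.8}.

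For the $q$-parameter assertion in (c) with $M_\alpha = GL_{2n}(F)$, I specialise Theorem \ref{thm:3.1} to $G = GL_m(D)$ with $m = 2n$, $D = F$, together with the multiplicity-two inertial class coming from $\sigma_L^{\boxtimes 2}$: this is the $e = 2$, $m' = 2n$ case in the discussion following \eqref{eq:3.26}, with $\pi = \sigma_L$. The theorem yields $q_{\alpha*} = 1$, $q_\alpha = q_F^f$ and $f = s_\pi t_\pi$; since $GL_n$ is $F$-split, the Bernstein--Zelevinsky reducibility exponent satisfies $s_\pi = 1$, so $f = t_{\sigma_L}$, and \eqref{eq:3.26} gives $f \mid (2n)(2n)/((2n)(2)) = n$. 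The main obstacle in this plan is the Clifford equivalence invoked in the middle paragraph: a priori a shared component of $\sigma_L|_{SL_n}$ and $\sigma_R|_{SL_n}$ only forces $\sigma_L \cong \chi \sigma_R$ for some (possibly ramified) character $\chi$ of $F^\times$, whereas membership $s_\alpha \in W(M, \mc O)$ requires $\chi$ to be unramified; bridging this last step is where the fine structure of $GL_n(F)$-supercuspidals enters non-trivially and must be handled carefully.
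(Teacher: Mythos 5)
Parts (a), (b) and the $q$-parameter computation at the end of (c) are fine and close to the paper's own route: (a) is the same counting argument, (b) uses only the easy direction of the Clifford dictionary (an unramified character is trivial on $M_\der$, so $s_\alpha \in W(M,\mc O)$ forces a common constituent), and the identification $f = s_\pi t_\pi = t_{\sigma_L}$ dividing $n$ via Theorem \ref{thm:3.1} and \eqref{eq:3.26} (with $e=2$, $m'=2n$, $D=F$, $s_\pi = 1$) is exactly what the paper does. One caveat on your reduction step: Corollary \ref{cor:2.7} only compares groups between $\mc G_\der$ and $\mc G$, so passing from a general $M_\alpha$ of type $A_{n+m-1}$ to $GL_{n+m}$ also needs Corollary \ref{cor:2.6} first; and the data are not literally ``insensitive'' to this replacement --- Example \ref{ex:A} and Proposition \ref{prop:2.4}.d.(iii) show that $h_\alpha^\vee$, hence the $q$-parameters, can change under precisely this isogeny. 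This is why the corollary restricts its parameter claim to $M_\alpha = GL_{2n}(F)$ and why the paper adds, immediately after the statement, that for other $M_\alpha$ one may still have to apply Proposition \ref{prop:2.4}.d.

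The genuine gap is the one you flag yourself: for (c) you need [some $\sigma_i \cong \sigma'_i$] $\Rightarrow$ [$s_\alpha \in W(M,\mc O)$ and $\Sigma_{\mc O,\mu} = \{\alpha,-\alpha\}$], and in the $GL_n(F)\times GL_n(F) \subset GL_{2n}(F)$ picture a shared $SL_n$-constituent only yields $\sigma_L \cong \chi\sigma_R$ for some character $\chi$ of $F^\times$, with no control on ramification. This is not a repairable loose end of your argument but a genuine failure of the implication in that picture: if $\chi$ is ramified and $\chi\sigma_R$ is not an unramified twist of $\sigma_R$, then by Bernstein--Zelevinsky (Corollary \ref{cor:1.1}) $\mu^{M_\alpha}$ has no zero on $\mc O$ and $\Sigma_{\mc O,\mu} = \emptyset$, while $s_\alpha \notin W(M,\mc O)$. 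The paper avoids this by arguing in the opposite direction of your reduction: using Proposition \ref{prop:2.2} it transfers the question (membership of $\alpha$ in $\Sigma_{\mc O,\mu}$, equivalently by \eqref{eq:3.24} membership of $s_\alpha$ in $W(M,\mc O)$) to the simply connected cover $M_\Sc$ of $M_\der$, where $X_\nr$ is trivial; there the orbit is the single representation $\sigma_i \boxtimes \sigma'_i$, so ``stabilizing the inertial class'' is literally ``stabilizing $\sigma_i \boxtimes \sigma'_i$'', which holds iff $\sigma_i \cong \sigma'_i$, and the ramified-versus-unramified dichotomy never arises. As written, your proof of the first assertion of (c) is incomplete, and it cannot be completed within the $GL_{2n}$ picture without re-expressing the hypothesis there; you should incorporate the transfer to $M_\Sc$ (or an equivalent device) rather than defer it.
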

\begin{proof}
(a) This is clear, because such an element would have to exchange the two almost direct
simple factors of $M_\der$. \\
(b) Now $s_\alpha$ does give an element of $N_{M_\alpha}(M) / M$, which exchanges the two
almost direct simple factors of $M_\der$. By Proposition \ref{prop:2.2}
we may lift to the simply connected cover $M_\Sc$, picking one irreducible constituent
$\sigma_i \otimes \sigma'_i$ of the inflation of $\sigma |_{M_\der}$. As $M_\Sc$ does not
have nontrivial unramified characters, stabilizing $\mf s$ has become stabilizing $\sigma_i
\otimes \sigma'_i$. Clearly $s_\alpha$ does that if and only if $\sigma_i$ and $\sigma'_i$
are isomorphic.\\
(c) This follows from Theorem \ref{thm:3.1}.
\end{proof}

In part (c) for $M_\alpha \neq GL_{2n}(F)$, it may still be necessary to apply Proposition
\ref{prop:2.4}.d to obtain the precise parameters.
\begin{ex}\label{ex:A}
Consider the inclusion
$\eta : SL_4 (F) \to GL_4 (F)$ and the Levi subgroups $M = GL_2 (F)^2$ and $\tilde M = S(GL_2 (F)^2)$.
Let $\sigma \in \Irr_\cusp (GL_2 (F))$ with 
\[
X_\nr (GL_2 (F),\sigma) = \{1,\chi_-\}.
\] 
We may assume that $\sigma |_{SL_2 (F)}$ decomposes as a direct sum of two irreducible representations, 
both stable under diag$(a,b) \in GL_2 (F)$ for all $a,b \in \mf o_F^\times$. Then $\sigma \otimes \sigma 
\in \Irr (M)$ and $\eta^* (\sigma \otimes \sigma)$ is a direct sum of two irreducible $\tilde M
$-representations $\tilde \sigma_1, \tilde \sigma_2$, permuted by diag$(\varpi_F,1) \in M$. Here 
\[
\eta^* (X_\nr (M,\sigma \otimes \sigma)) = \{ 1, \chi_- \otimes 1 \}
\]
but tensoring by $\chi_- \otimes 1$ exchanges $\tilde \sigma_1$ and $\tilde \sigma_2$. It follows that
\[
X_\nr (\tilde M, \tilde \sigma_1) = X_\nr (\tilde M, \tilde \sigma_2) = \{1\}. 
\]
The root systems of the Hecke algebras are $\{\alpha,-\alpha\}$ and $\{\tilde \alpha, -\tilde \alpha\}$, 
while $h_\alpha^\vee = \eta (h_{\tilde \alpha}^\vee)^2 \in M / M^1$. So this is an instance
of Proposition \ref{prop:2.4}.d.(iii).
\end{ex}

\subsection{Classical groups} \
\label{par:classical}

We look at classical groups associated to Hermitian forms on $F$-vector spaces. 
Let $\mc G^*$ be a symplectic group or a special orthogonal group (not necessarily split). It was 
shown in \cite{Hei2} that $\End_G (\Pi^{\mf s})$ is Morita equivalent with the crossed product of 
$\mc H (\mc O,G)$ and $R(\mc O)$, where $\mc H (\mc O,G)$ is a tensor product of affine Hecke 
algebras with lattice $\Z^e$ and root system $A_{e-1}, B_e$, $C_e$ or $D_e$. When $\mc G^*$ is 
$F$-split, the parameters are computed in \cite{Hei1}, relying on \cite{Moe2}. Later the 
(quasi-)split assumption in \cite{Moe2} was lifted in \cite{MoRe}, which means that \cite{Hei1}
also applies to pure inner forms of quasi-split groups.

We also allow $\mc G^*$ to be a special unitary group. With Section \ref{sec:red} we 
reduce that to $U_n$, a unitary group which splits over a separable quadratic 
extension $\tilde F / F$. According to \cite[Theorem 1.8 and \S C.5]{Hei3}, the above description 
of $\mc H (\mc O,G)$ is also valid for $U_n$. Unfortunately there is no real proof of these
claims in \cite{Hei3}, but it is similar to \cite{Hei2} and in fact an instance of the more
general results of \cite{SolEnd}. Also according to \cite[\S C]{Hei3}, the parameters of these 
affine Hecke algebras can be computed as in \cite{Hei1}. This uses the results of \cite{Moe1,Moe2,Moe3}.

Recall that every $F$-Levi subgroup of $G^*$ is of the form
\begin{equation}\label{eq:3.17}
\mc M^* (F) \cong \prod\nolimits_i GL_{n_i}(F') \times \mc H^* (F) ,
\end{equation}
where $\mc H^*$ is of the same type as $\mc G^*$, but of smaller rank. Here $F' = \tilde F$ for
(special) unitary groups and $F' = F$ otherwise. Let $f$ be residue degree of $F' / F$, so
2 for unramified (special) unitary groups and 1 otherwise.

Let $\mc G$ be a group isogenous to $\mc G^*$ as above and let $\mc M$ be a $F$-Levi subgroup of 
$\mc G$. Consider an inertial equivalence class $\mf s = [M,\sigma ]_G$. Via Lemma \ref{lem:2.1} for 
groups isogenous to $M$, $\sigma$ gives rise to an irreducible representation $\tilde \sigma$ of
$\mc M^* (F)$. By Proposition \ref{prop:2.4} the choice does not matter.
In terms of \eqref{eq:3.17}, $\tilde \sigma$ is the tensor product 
of supercuspidal representations $\tau_i$ of the factors $GL_{n_i}(F')$ and one supercuspidal 
representation of $\mc H^* (F)$. As explained in
\cite[\S 2]{Hei1}, one can adjust $\tilde \sigma$ inside its inertial equivalence class to 
bring the $\tau_i$ in a better position, so that in particular as many of them as possible are equal.
We fix an irreducible component $\Sigma_{\mc O,j}^\vee$ of $\Sigma_{\mc O}^\vee$, as in Section 
\ref{sec:red}. Of the various $\tau_i$ a unique $\tau$ is relevant for $\Sigma_{\mc O,j}^\vee$.
We write $n_i = d_\tau$, so that $\tau \in \Irr (GL_{d_\tau}(F'))$.

\begin{thm}\label{thm:3.2}
Assume we are in the setting described in this paragraph. 
Let $t = t_\tau \in \Z_{>0}$ be the torsion number of $\tau$, a divisor of $d_\tau$.
\enuma{
\item When $\Sigma_{\mc O,j}^\vee \cong C_e$ and $h_\alpha^\vee$ is a long root, there exists an 
integer $a_+ \in \Z_{> 0}$ such that $q_\alpha = q_{F'}^{t a_+}, q_{\alpha*} = 1$ and 
$\lambda (\alpha) = \lambda^* (\alpha) = f t a_+$.
\item When $\Sigma_{\mc O,j}^\vee \cong B_e$ and $h_\alpha^\vee$ is a short root, there exist 
integers $a \geq a_- \geq -1$ such that $q_\alpha = q_{F'}^{t (a+1)/2}, q_{\alpha*} = 
q_{F'}^{t (a_- + 1)/2}$ and $\lambda (\alpha) =  f t (a + a_- + 2)/2$, 
$\lambda^* (\alpha) = f t (a - a_-)/2$.
\item For all other $h_\alpha^\vee \in \Sigma_{\mc O,j}^\vee$, $q_\alpha = q_{F'}^t, 
q_{\alpha *} = 1$ and $\lambda (\alpha) = \lambda^* (\alpha) = f t$.
}
Suppose that $\mc M$ is isogenous to \eqref{eq:3.17} and that the complex dual group of $\mc H^*$ 
consists of matrices of size $N^\vee$. Then
\[
\Big\lfloor \big( \frac{a+1}{2} \big)^2 \Big\rfloor + 
\Big\lfloor \big( \frac{a_- +1}{2} \big)^2 \Big\rfloor \leq N^\vee d_\tau^{-1}
\]
in case (b) and $a_+^2 \leq 2 N^\vee d_\tau^{-1} + 1$ in case (a).
\end{thm}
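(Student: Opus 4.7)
The plan is to combine the detailed structural results of Heiermann \cite{Hei1,Hei2,Hei4} (interpreted through our Section \ref{sec:progen}) with M\oe glin's reducibility classification \cite{Moe1,Moe2}, after first reducing to the quasi-split (or pure-inner-form) case. Since $\mc G$ is isogenous to $\mc G^*$, Corollaries \ref{cor:2.6} and \ref{cor:2.7} let us replace $(G,M)$ by the analogous pair inside $G^*$: the $q$-parameters are preserved by Proposition \ref{prop:2.4}, with the only subtlety being the possible $B/C$-swap of Proposition \ref{prop:2.4}(d), which is exactly what distinguishes cases (a) and (b) in the statement. Inside $G^*$ the results of \cite{Hei2,Hei4}, combined with \cite{SolEnd}, decompose $\mc H(\mc O, G^*)$ (up to $R(\mc O)$) as a tensor product of affine Hecke algebras, one factor for each Galois orbit of supercuspidal $\rho$ on $GL_{d_\rho}(F')$ occurring in $\sigma$; the irreducible component $\Sigma_{\mc O,j}^\vee$ in the statement is precisely one such factor, while the remaining cuspidal support produces a supercuspidal representation $\sigma_0$ of a smaller classical group $\mc H^*_0(F)$ of the same type as $\mc H^*$.

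Each root $\alpha \in \Sigma_{\mc O,j}^\vee$ then falls into one of two geometric regimes. \emph{Interior roots} (case (c)) come from swapping two copies of $\rho$ in a $GL_{2 d_\rho}(F')$-block of a rank-one Levi; Theorem \ref{thm:3.1} applied inside that block gives $q_\alpha = q_{F'}^t$ and $q_{\alpha*}=1$, so $\lambda = \lambda^* = ft$. \emph{Boundary roots} (cases (a), (b)) have rank-one Levi $M_\alpha \cong GL_{d_\rho}(F') \times \mc H^*_0(F)$, and by Corollary \ref{cor:1.1} the poles of $\mu^{M_\alpha}$ on $\mc O$ encode exactly the reducibility points of $\rho\,|\!\det\!|^s \rtimes \sigma_0$. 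M\oe glin's theorem pins these points down: reducibility occurs at $s = (a+1)/2$, and possibly at $s = -(a_-+1)/2$, for uniquely determined non-negative integers $a \geq a_- \geq -1$, with the convention $a_- = -1$ flagging the absence of a second reducibility. Substituting into the Harish-Chandra formula \eqref{eq:3.22}, via the identity $X_\alpha(\rho\,|\!\det\!|^s \otimes \sigma_0) = q_{F'}^{t s}$ --- in which the factor $t$ records the gap between $h_\alpha^\vee$ and the natural generator $\alpha^\vee$ of the $GL$-block, as in Proposition \ref{prop:3.5}(a) --- yields the claimed $q_\alpha, q_{\alpha*}$, and the labels $\lambda, \lambda^*$ are then read off using $q_{F'} = q_F^f$.

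The final inequality is a Jordan-block counting argument. M\oe glin attaches to $\sigma_0$ a multi-set $\mathrm{Jord}(\sigma_0)$ of pairs $(\rho',b)$ parametrising a discrete L-parameter $\varphi_{\sigma_0}\colon W_F \to {}^L \mc H^*_0$ valued in a classical dual group of size $N^\vee$; the self-duality forces the sum of $d_{\rho'} \lfloor ((b+1)/2)^2\rfloor$ (type $B$ side) or $d_{\rho'}\lfloor ((b+1)/2)\lceil (b+1)/2\rceil \rfloor$ (type $C$ side) over Jordan blocks to be bounded by $N^\vee$. In case (b) the integers $(a, a_-)$ are the two largest Jordan block sizes at $\rho$, and in case (a) the integer $a_+$ is the largest such size; isolating those terms in the global inequality yields the stated bounds. \textbf{The main obstacle} throughout is the normalisation bookkeeping of the middle paragraph: one has to simultaneously track the torsion factor $t$ (from $h_\alpha^\vee$ versus $\alpha^\vee$), the residue-degree factor $f$ (from $q_{F'}$ versus $q_F$), and M\oe glin's half-integer $(a+1)/2$, which in particular explains why $\lambda^*(\alpha)$ can be half-integral in case (b), consistently with Conjecture \ref{conj:1}(i).
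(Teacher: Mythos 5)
Your overall route (isogeny reduction to the quasi-split case, Heiermann's tensor decomposition of $\mc H (\mc O,G^*)$, M\oe glin's reducibility points for $\rho |\!\det\!|^s \rtimes \sigma_0$, and a Jordan-block count for the final inequality) is the same as the paper's. But there are two genuine gaps. First, you never reduce to characteristic zero. The theorem is stated over an arbitrary non-archimedean local field, while M\oe glin's classification \cite{Moe1,Moe2,Moe3} and hence Heiermann's parameter computations \cite{Hei1} are only available over $p$-adic fields; the paper's proof begins precisely with "by Corollary \ref{cor:4.5} we may assume that $F$ has characteristic zero", and the whole close-fields machinery of Section \ref{sec:char0} exists for this step. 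Without it your appeal to M\oe glin is unjustified for local function fields.

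Second, your opening reduction "Corollaries \ref{cor:2.6} and \ref{cor:2.7} let us replace $(G,M)$ by the analogous pair inside $G^*$" does not work for all groups isogenous to $G^*$. Corollary \ref{cor:2.6} reduces to the simply connected cover $G_\Sc$, which for the orthogonal cases is $\mr{Spin}_n$, not $SO_n$; representations of $\mr{Spin}_n (F)$ on which $\ker (\mr{Spin}_n \to SO_n)$ acts nontrivially are not pullbacks from $SO_n (F)$, so they are not covered by Heiermann's results for $G^*$. The paper closes this hole by embedding $\mr{Spin}_n$ into $\mr{GSpin}_n$, observing that the M\oe glin--Heiermann analysis extends to $\mr{GSpin}_n$, and invoking Corollary \ref{cor:2.7}. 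A smaller inaccuracy: in case (b) the integers $a$ and $a_-$ are not "the two largest Jordan block sizes at $\rho$"; $a_-$ is attached to the \emph{other} self-dual unramified twist $\rho_-$ of $\rho$. This matters for the inequality, since the two floor terms come from the disjoint families $\{(\rho,a')\}$ and $\{(\rho_-,a')\}$ in $\mr{Jord}(\pi)$, whose contributions to $\sum a' d_{\rho'} = N^\vee$ can therefore be added. Relatedly, the bound $a_+^2 \leq 2 N^\vee d_\rho^{-1} + 1$ in case (a) requires the explicit identification $a_+ = a + 1$ with $a = a_-$ coming from the renormalization $h_\alpha^\vee \mapsto 2 h_\alpha^\vee$ when $q_\alpha = q_{\alpha*}$, which you assert but do not derive.
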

We note that for a maximal Levi subgroup $M^* = GL_{d_\tau}(F') \times H^*$ of $G^*$,
the root system $\Sigma_{\mc O}^\vee$ has type $B_1$ or is empty.
\begin{proof}
By Corollary \ref{cor:4.5} we may assume that $F$ has characteristic zero.

For $G^*$ the claims (b) and (c) follow from \cite[Proposition 3.4]{Hei1} and \cite[\S C.5]{Hei3}.
The role of the torsion number $t$ is to replace the sublattice $\Z^e$ of $M / M^1$ corresponding 
to $\Sigma_{\mc O,j}^\vee$ by $(t \Z)^e$, which is a direct summand of $M_\sigma^2 / M^1$.
In this process, all the labels $\lambda (\alpha)$ and $\lambda^* (\alpha)$ are multiplied by $t$. 

The numbers $a, a_-$ come from \cite[Proposition 4]{Moe1}, \cite[\S 1.3--1.4]{Moe2} and  
\cite[Th\'eor\`eme 3.1]{Moe3}, where they are computed in terms of reducibility of the parabolic 
induction of a supercuspidal representation $\tau \otimes \pi$ of $GL_{d_\tau}(F') \times H^*$. 
This shows that in general we have to use $F'$ instead of $F$. For (special) unitary groups, the 
factors $GL_{d_\tau}(\tilde F)$ in \eqref{eq:3.17} cause another
factor $f$ in all the parameters, as explained in \cite[\S C]{Hei3}.

Recall that the Jordan block of $\pi \in \Irr (H^*)$ is built from the pairs $(\tau,a)$ that we 
consider (but those with $a \leq 0$ omitted), by adding new pairs according to the rule
\[
\text{if } (\tau,a) \in \mr{Jord}(\pi) \text{ and } a > 2 \text{ then } (\tau,a-2) \in \mr{Jord}(\pi) .
\]
It was shown in \cite[\S 1.4]{Moe2} and \cite[Proposition 4]{Moe1} that
\begin{equation}\label{eq:3.30}
\sum\nolimits_{(\tau,a) \in \mr{Jord}(\pi)} a d_\tau = N^\vee .
\end{equation}
We fix a $\tau$ and let $\tau_-$ be the unramified twist of $\tau$ from which $a_-$ is determined.
Isolating the terms with $\tau$ and $\tau_-$ in \eqref{eq:3.30}, we obtain
\begin{multline}\label{eq:3.31}
N^\vee \geq \sum\nolimits_{a' : (\tau,a') \in \mr{Jord}(\pi)} a' d_\tau + 
\sum\nolimits_{a' : (\tau_-,a') \in \mr{Jord}(\pi)} a' d_{\tau_-} \\
= \Big\lfloor \big( \frac{a+1}{2} \big)^2 \Big\rfloor d_\tau + 
\Big\lfloor \big( \frac{a_- +1}{2} \big)^2 \Big\rfloor d_\tau.
\end{multline}
Case (a) for $\mc G^*$ is not mentioned explicitly in \cite{Hei1}, it is an instance of case (b) 
when we focus on the Weyl group (not on the root system). 
As the lattice containing $\Sigma_{\mc O,j}^\vee$ is isomorphic to $\Z^e$, the construction of 
$h_\alpha^\vee$ entails that $h_\alpha^\vee \notin 2 \Z^e$, so that $\Sigma_{\mc O,j}^\vee$ does not 
have type $C_e$. Still, this root system occurs if $\Sigma_{\mc O,j}^\vee \cong B_e$ and 
$q_\alpha = q_{\alpha*}$. Then we can replace $h_\alpha^\vee$ by $h_{\alpha / 2}^\vee = 
2 h_\alpha^\vee$, $X_\alpha$ by $X_\alpha^2$, $B_e$ by $C_e$, $q_\alpha$ by $q_\alpha^2 = q_\alpha
q_{\alpha*}$ and $q_{\alpha*}$ by 1, without changing the Hecke algebra $\mc H (\mc O,G)$. We find 
\[
\lambda (\alpha / 2) = \lambda^* (\alpha /2) = 
\lambda (\alpha) + \lambda^* (\alpha) = t f (a + 1), 
\]
so the $a_+$ for $2 h_\alpha^\vee$ is 1 plus the $a$ from $B_e$. When $a_+$ is odd, the previously
established bound on $a = a_-$ directly yields the new bound on $a_+$. When $a_+$ is even, 
\eqref{eq:3.31} says
\[
2 \Big( \big( \frac{a + 1}{2} \big)^2 - \frac{1}{4} \Big) \leq N^\vee d_\tau^{-1}, \text{ so }
a_+^2 = (a+1)^2 \leq 2 N^\vee d_\tau^{-1} + 1.
\]
When $\mc G$ is a quotient of $\mc G^*$, Section \ref{sec:red} enables us to reduce to $G^*$.
According to Proposition \ref{prop:2.4}, in the process the labels for $\alpha$ must be 
multiplied by some $N_\alpha \in \{1/2,1,2\}$. But for type $A$ roots nothing really changes along
$G^* \to G$ (the computations can be placed entirely in a general linear group) so $N_\alpha = 1$. 
For other roots $\beta$ either
$N_\beta = 1$ or (if $\Sigma_{\mc O,j}^\vee \cong C_e$) $N_\beta = 2$ or (if $\Sigma_{\mc O,j}^\vee 
\cong B_e$) $N_\beta = 1/2$. In the last two cases types $C_e$ and $B_e$ are exchanged, and the 
relations between the parameters are the same as between cases (a) and (b) of the theorem.

In the remaining cases $\mc G$ is a spin group (or a half-spin group, but by passing to the simply
connected cover, as allowed by Corollary \ref{cor:2.6}, we reduce that case to a spin group).
Then $\mc G$ can be embedded in a general spin group $\mr{GSpin}_n$ -- not necessarily split,
but at least a pure inner form of a quasi-split group. The Levi subgroups of $\mr{GSpin}_n$ follow 
the same pattern as for $SO_n$, and their discrete series representations can be classified as for 
special orthogonal groups, see \cite[\S 1.3, \S 1.5]{Moe2} and \cite[\S 4.4, \S 6.3]{Moe3}. 
Consequently the results of \cite{Hei1,Hei2} also hold for 
$\mr{GSpin}_n$, with the only difference that the lattices in the Hecke algebras have rank one higher 
than for $SO_n$. Thus Theorem \ref{thm:3.2} holds for $\mr{GSpin}_n$, possibly with correction factors
$N_\beta \in \{1/2,1,2\}$ as above for quotients of $G^*$. By Corollary \ref{cor:2.7} the theorem also 
holds for the derived group $\mc G$ of $\mr{GSpin}_n$.
\end{proof}

In the generality of Theorem \ref{thm:3.2} it is hard to make the integers $a_+, a$ and $a_-$ more 
explicit, since they depend in a very subtle way on the involved supercuspidal representations. If one 
restricts to specific classes of Bernstein components, more can be said about the Hecke algebra 
parameters. In particular, for the principal series representations of quasi-split classical groups the 
method in Paragraphs \ref{par:split} and \ref{par:quasisplit} yields the concrete $q$-parameters. 

To check the integrality of the label functions $\lambda,\lambda^*$, we analyse the parity of $a$ and 
$a_-$. As explained in \cite[\S 1]{Hei1}, this boils down to comparing the Langlands parameter $\rho$ 
of a supercuspidal representation of $GL_k (F')$ with the Langlands parameter of a supercuspidal 
representation of $H$ (a classical group of the same type as $G$). 

For $G$ of Lie type $B_n, C_n, D_n$ or ${}^2 D_n$, $\rho$ is self-dual. Then $a$ is odd if and only
if $\rho$ and the complex dual group $H^\vee$ of $H$ have the same type (orthogonal or symplectic).

For $G$ of Lie type ${}^2 A_{n-1}$, $\rho$ is conjugate-dual, that is, the contragredient $\rho^\vee$
is isomorphic to $s \cdot \rho$ for $s \in \mb W_F \setminus \mb W_{\tilde F}$. From
\cite[Theorem 8.1]{GGP} we see that the standard representation of $H^\vee$ is conjugate-orthogonal
or conjugate-symplectic, depending on the size of $H^\vee$. Just as above, $a$ is odd if and only if
$\rho$ and this standard representation have the same type.

The parity of $a_-$ is determined by analogous considerations, starting from a diffe\-rent self-dual 
or conjugate-dual representation $\rho \otimes \chi$ with $\chi$ an unramified character.
More specifically, let $t_\rho$ be the torsion number of $\rho \in \Irr (\mb W_F)$, that is, the 
number of unramified characters $\chi$ such that $\rho \cong \rho \otimes \chi$. Then
there exist precisely $2 t_\rho$ unramified characters $\chi$ of $\mb W_F$ such that $\rho \otimes
\chi$ is self-dual, namely those with $\chi^{2 t_\rho} = 1$. When moreover $\chi^{t_\rho} \neq 1$,
also $\rho \otimes \chi \not\cong \rho$. That provides a unique (up to isomorphism) $\rho_- = \rho
\otimes \chi$ which is self-dual and not isomorphic to $\rho$. The number $a_-$ is computed from 
this $\rho_-$.
The same applies to conjugate-dual representations of $\mb W_{\tilde F}$.

The following result extends \cite[Proposition 1.3]{Hei1}.

\begin{prop}\label{prop:3.3}
\enuma{
\item Let $\rho \in \Irr (\mb W_F)$ be a self-dual and let $\rho_-$ be as above.
\begin{itemize}
\item If $t_\rho$ is odd, then $\rho$ and $\rho_-$ have the same type (orthogonal or 
symplectic).
\item If $t_\rho$ is even, then $\rho$ and $\rho_-$ can have the same or opposite type.
\end{itemize}
\item Let $\tilde \rho \in \Irr (\mb W_{\tilde F})$ be a conjugate-dual and let $\tilde{\rho}_-$
be the unique (up to isomorphism) conjugate-dual twist of $\tilde \rho$ by an unramified character.
\begin{itemize}
\item If $\tilde F / F$ is ramified and $t_{\tilde \rho}$ is odd, then $\tilde \rho$ and 
${\tilde \rho}_-$ have the same type\\ (conjugate-orthogonal or conjugate-symplectic).
\item If $\tilde F / F$ is ramified and $t_{\tilde \rho}$ is even, then $\tilde \rho$ and 
${\tilde \rho}_-$ can have the same or opposite type.
\item If $\tilde F / F$ is unramified, then $\tilde \rho$ and ${\tilde \rho}_-$ have opposite type.
\end{itemize}
}
\end{prop}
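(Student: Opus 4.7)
\emph{Proof strategy.} For both parts the plan is to realize the irreducible Weil-group representation as induced from the unramified extension of degree equal to its torsion number, and then to track the self-dual (in (a)) or conjugate-dual (in (b)) invariant pairing through the Mackey decomposition under the unramified quadratic twist. Concretely, write $\rho = \mr{Ind}_{\mb W_{F_t}}^{\mb W_F} \rho_0$, with $F_t/F$ unramified of degree $t = t_\rho$ and $\rho_0$ irreducible of trivial torsion, and analogously $\tilde\rho = \mr{Ind}_{\mb W_{\tilde F_t}}^{\mb W_{\tilde F}} \tilde\rho_0$. Then $\rho_- = \mr{Ind}(\rho_0 \otimes \mr{sgn}_{F_t})$ and $\tilde\rho_- = \mr{Ind}(\tilde\rho_0 \otimes \mr{sgn}_{\tilde F_t})$, where $\mr{sgn}_E$ denotes the unique unramified quadratic character of $\mb W_E$.

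For part (a), self-duality of $\rho$ forces a unique $i_0 \in \Z/t$ with $\Fr_F^{i_0}\rho_0 \cong \rho_0^\vee$; applying the dual to both sides gives $2 i_0 \equiv 0 \pmod t$, so $i_0 \in \{0,\, t/2\}$ (the latter only if $t$ is even). When $i_0 = 0$, $\rho_0$ itself is self-dual, and a Mackey decomposition of $(\rho \otimes \rho)^{\mb W_F}$ yields a single non-zero summand on which the swap involution acts as multiplication by $\varepsilon(\rho_0)$, so $\varepsilon(\rho) = \varepsilon(\rho_0)$. Because the invariant bilinear form on $\rho_0$ remains invariant on $\rho_0 \otimes \mr{sgn}_{F_t}$ with the same symmetry type, we likewise obtain $\varepsilon(\rho_-) = \varepsilon(\rho_0) = \varepsilon(\rho)$. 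For $t$ odd this is the only case, so the signs must agree. For $t$ even the alternative $i_0 = t/2$ also arises (with $\rho_0$ not self-dual but $\Fr_F^{t/2}\rho_0 \cong \rho_0^\vee$), the sign of $\rho$ depends on subtler data, and exhibiting small explicit examples on either side shows that both $\varepsilon(\rho_-) = \varepsilon(\rho)$ and $\varepsilon(\rho_-) = -\varepsilon(\rho)$ can occur.

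For part (b) the same setup applies, with the Galois involution $s \in \mr{Gal}(\tilde F/F)$ playing the role of the duality twist. In the ramified case, the residue fields of $\tilde F$ and $F$ coincide, so any lift of $s$ differs from $\Fr_{\tilde F}$ by an element of the inertia subgroup; hence $s$ acts trivially on every unramified character of $\mb W_{\tilde F}$ and $\mb W_{\tilde F_t}$, and the argument of part (a) transfers verbatim with the same parity dichotomy for $t_{\tilde\rho}$. In the unramified case one can take $s = \Fr_F$, so $s^2 \equiv \Fr_{\tilde F} \pmod{\mb I_{\tilde F}}$. For a one-dimensional $\tilde\rho = \chi$, conjugate-duality forces $\chi^2 = 1$, and the Gan--Gross--Prasad sign convention \cite{GGP} gives $\varepsilon(\chi) = \chi(s^2) = \chi(\Fr_{\tilde F})$; the twist $\tilde\rho_- = \chi \cdot \mr{sgn}_{\tilde F}$ therefore satisfies $\varepsilon(\tilde\rho_-) = -\varepsilon(\tilde\rho)$ directly. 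The general case reduces to this by checking that the induced conjugate-dual pairing on $\tilde\rho$ inherits its sign from the value of the central character of $\tilde\rho_0$ at $\Fr_{\tilde F_t}$, which is multiplied by $-1$ after the twist by $\mr{sgn}_{\tilde F_t}$.

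The main technical obstacle is this last step: verifying that the one-dimensional sign flip at the level of $\tilde\rho_0$ propagates without cancellation through induction to $\mb W_{\tilde F}$, uniformly in the parity of $t_{\tilde\rho}$ and the dimension of $\tilde\rho_0$. What makes this work in the unramified case, but fails in the ramified case and in part (a), is precisely the coincidence $s^2 \equiv \Fr_{\tilde F}$, which forces the extra factor $\mr{sgn}(\Fr_{\tilde F}) = -1$ into every conjugate-dual sign computation.
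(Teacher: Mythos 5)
Your induction-theoretic setup ($\rho = \mr{Ind}_{\mb W_{F_t}}^{\mb W_F}\rho_0$ with $F_t/F$ unramified of degree $t_\rho$) is a legitimate alternative to the paper's argument, which works instead with the restriction to $\mb I_F$ and the decomposition $\rho|_{\mb I_F} \cong \rho_1\oplus\cdots\oplus\rho_t$; the two are Clifford-theoretically equivalent, and your treatment of part (a) for $t_\rho$ odd (the case $i_0=0$, where the induced pairing has the symmetry type of the pairing on $\rho_0$, and twisting $\rho_0$ by a quadratic unramified character preserves that pairing) is correct and matches the paper's conclusion. Two softer points: for $t_\rho$ even you leave the case $i_0=t/2$ entirely open and appeal to unexhibited examples, whereas the paper actually determines the answer there (the type always flips when $\rho_0$ is not self-dual); and in the ramified case of (b) the assertion that the argument "transfers verbatim" needs the observation that a representative $s$ of $\mb W_F/\mb W_{\tilde F}$ can be chosen inside $\mb I_F$, so that the whole conjugate-duality structure restricts to inertia -- this is what the paper checks.

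The genuine gap is in the unramified case of (b), and you flag it yourself: you cannot show that the sign flip at the level of $\tilde\rho_0$ propagates through induction "uniformly in the parity of $t_{\tilde\rho}$". The missing idea is that there is nothing to propagate: when $\tilde F/F$ is unramified one may take $s$ with $s^2$ a Frobenius element of $\mb W_{\tilde F}$, and the defining relation $\langle v,v'\rangle = \pm\langle v',\tilde\rho(s^2)v\rangle$ pairs the inertia-isotypic component $V_1$ against $V_{1^\vee}$ on the left while $\tilde\rho(s^2)v$ has been moved to the \emph{next} component in the Frobenius cycle on the right; this is inconsistent unless the cycle is trivial. Hence conjugate-duality over an unramified quadratic extension forces $t_{\tilde\rho}=1$, your induction is the identity, the twisting character is the unramified quadratic character of $\mb W_{\tilde F}$, and the sign flip follows at once from \cite[Lemmas 3.4 and 3.5.ii]{GGP} (that character is conjugate-symplectic, and twisting by a conjugate-symplectic character reverses the type). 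Without this reduction your "central character at $\Fr_{\tilde F_t}$" step is not a proof -- the type of a higher-dimensional conjugate-dual representation is not read off from its central character -- so as written the case of the proposition that is actually needed downstream (Lemma \ref{lem:3.10}.a and the integrality of the labels for unramified unitary groups) remains unproven.
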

\begin{proof}
(a) Since the inertia group $\mb I_{F}$ is normal and $\mb W_{F} / \mb I_{F} \cong \Z$, $\rho$ can
be analysed well by restriction to $\mb I_{F}$. Clifford theory tells us that there exist mutually 
inequivalent irreducible $\mb I_{F}$-representations $\rho_1,\ldots,\rho_t$ such that 
\begin{equation}\label{eq:3.18}
\Res^{\mb W_F}_{\mb I_F} \rho \cong \rho_1 \oplus \cdots \oplus \rho_t
\end{equation}
and a Frobenius element $\Fr$ of $\mb W_{F}$ permutes the $\rho_i$ cyclically. The unramified 
characters $\chi$ that stabilize $\rho$ are precisely those for which $\chi (\Fr^t)$ acts trivially 
on $\rho_1$, so $t$ equals the torsion number $t_\rho$.

If $\rho_1$ is self-dual, then so are all the $\rho_i$, and the $\mb W_{F}$-invariant bilinear form 
on $\rho$ is a direct sum of $\mb I_{F}$-invariant bilinear forms on the $\rho_i$. Then the type of 
$\rho$ is the same as the type of $\rho_1$, which depends only on $\mb I_{F}$ and is not affected by 
twisting with unramified characters. This can happen for even $t$ and for odd $t$.

If $\rho_1$ is not-self dual, then none of the $\rho_i$ is self-dual. In that case $t$ is even and 
the dual of $(\rho_i,V_i)$ is isomorphic to $\rho_{i^\vee}$ for a unique integer $i^\vee$. Further 
the $\mb W_{F}$-invariant bilinear form on $\rho$ restricts on $\rho_i \times \rho_{i^\vee}$ to $z$
times the canonical pairing, for some $z \in \C^\times$. Similarly it restricts on $\rho_{i^\vee} 
\times \rho_i$ to $z^\vee$ times the canonical pairing. It is easy to check that the representation 
$\rho_{i^\vee} \oplus \rho_i$ of $\mb I_{F} \rtimes \langle \Fr^{t/2} \rangle$ is self-dual and 
$z^\vee = \pm z$ where $\pm$ indicates the type of the representation. Then the type of $\rho$ is 
the same as the type of $\rho_{i^\vee} \oplus \rho_i$. 

By self-duality of $\rho$ and $\rho_- = \rho \otimes \chi$ and $\rho \not\cong \rho_-$, we must have
$\chi (\Fr^t) = -1$ and $\chi (\Fr^{t/2}) = \pm i$. In particular the representation $(\rho_{i^\vee} 
\oplus \rho_i) \otimes \chi$ of $\mb I_{F} \rtimes \langle \Fr^{t/2} \rangle$  is not self-dual with 
respect to the same bilinear form as $\rho_{i^\vee} \oplus \rho_i$. To make $(\rho_{i^\vee} \oplus 
\rho_i) \otimes \chi$ self-dual, we can take the bilinear form where in the above description 
$z^\vee$ is replaced by $-z^\vee$. This changes the sign of the 
bilinear form, so $\rho$ and $\rho_-$ have opposite type. \\
(b) When $\tilde F / F$ is ramified, we can pick a representative for $\mb W_F / \mb W_{\tilde F}$
in $\mb I_F$. Then the notions conjugate-dual, conjugate-orthogonal and conjugate-symplectic can be 
defined in the same way for $\mb I_{\tilde F}$-representations. The proof of part (a) applies to
$\tilde \rho \in \Irr (\mb W_{\tilde F})$, when we replace self-dual by conjugate-dual. 
The conclusion is that $\tilde \rho$ and $\tilde \rho_-$ have the same type.

When $\tilde F / F$ is unramified, we pick a representative $s$ for $\mb W_F / \mb W_{\tilde F}$
so that $s^2$ is a Frobenius element of $\mb W_{\tilde F}$. Conjugate-duality is still defined for
$\mb I_{\tilde F}$-representations (because $\mb I_{\tilde F}$ is normal in $\mb W_F$), but the type
of such a representation is not (because $s^2 \notin \mb I_{\tilde F}$). Nevertheless, we can still
decompose $\tilde \rho \in \Irr (\mb W_{\tilde F})$ as $\mb I_{\tilde F}$-representation like in 
\eqref{eq:3.18}. We see that the $\mb W_{\tilde F}$-invariant bilinear pairing between $\tilde \rho$ 
and $s \cdot \tilde \rho$ restricts to a pairing between $({\tilde \rho}_i, V_i)$ and 
$({\tilde \rho}_{i^\vee}, V_{i^\vee})$ for a unique $i^\vee$.
By definition \cite[\S 3]{GGP}, the type of $\tilde \rho$ is given by the sign $\pm$ in
\begin{equation}\label{eq:3.23}
\langle v , v' \rangle = \pm \langle v', \tilde \rho (s^2) v \rangle
\qquad \forall v, v' \in V_{\tilde \rho}.
\end{equation}
We consider $v$ in $V_1$ and $v' \in V_{1^\vee}$ such that the pairing \eqref{eq:3.23} is nonzero.
Then $\tilde \rho (s^2) v$ must also belong to $V_1$, but at the same time $\tilde \rho (s^2)$
permutes the ${\tilde \rho}_i$ cyclically. That renders \eqref{eq:3.23} impossible, unless 
$t_{\tilde \rho} = 1$. But then $\tilde \rho |_{\mb I_{\tilde F}}$ is irreducible and isomorphic to 
$s \cdot {\tilde \rho}^\vee |_{\mb I_{\tilde F}}$. In this situation the
bilinear pairing between $\tilde \rho$ and $s \cdot \tilde \rho$ is already determined by their
structure as $\mb I_{\tilde F}$-representations. 

The same applies to $\tilde \rho_- = \tilde \rho \otimes \tilde \chi$. Then the conjugate-duality of 
$\tilde \rho$ and $\tilde \rho_-$ implies that $\tilde \chi$ is quadratic. It cannot be trivial 
because $\tilde \rho$ and $\tilde \rho_-$ are not isomorphic, so $\tilde \chi$ is the unique 
unramified character of $\mb W_{\tilde F}$ of order two. By \cite[Lemma 3.4]{GGP} $\tilde \chi$ is 
conjugate-symplectic, and by \cite[Lemma 3.5.ii]{GGP} $\tilde \rho$ and $\tilde \rho_-$ have
opposite type.
\end{proof}

Proposition \ref{prop:3.3} is the key to the following result.

\begin{lem}\label{lem:3.10}
We assume the setting of Theorem \ref{thm:3.2}.b.
\enuma{
\item When $\mc G^*$ is a special unitary group which splits over an unramified extension
$\tilde F / F$, $a$ and $a_-$ have different parity. 
\item For all other $\mc G^*$ eligible in Theorem \ref{thm:3.2}: if $t_\tau$ is odd, 
then $a$ and $a_-$ have the same parity.
\item All the labels $\lambda (\alpha), \lambda^* (\alpha)$ in Theorem \ref{thm:3.2} are integers.
}
\end{lem}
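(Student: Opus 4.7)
The strategy is to reduce the parity comparison of $a$ and $a_-$ entirely to the comparison of types of the parameters $\rho$ and $\rho_-$, and then invoke Proposition \ref{prop:3.3}.

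First I would recall the discussion immediately preceding Proposition \ref{prop:3.3}: the integer $a$ from Theorem \ref{thm:3.2}.b is determined by a self-dual (resp. conjugate-dual) Langlands parameter $\rho$ of a supercuspidal representation of the appropriate general linear group, together with the supercuspidal $\pi$ of $H$ attached to $\mf s$. Specifically, $a$ is odd if and only if $\rho$ shares the same type (orthogonal vs.\ symplectic, or conjugate-orthogonal vs.\ conjugate-symplectic) as the standard representation of $H^\vee$. The integer $a_-$ is determined by the analogous recipe using the twist $\rho_-$ in place of $\rho$. Consequently
\[
a \equiv a_- \pmod{2} \quad \Longleftrightarrow \quad \rho \text{ and } \rho_- \text{ have the same type},
\]
so the lemma becomes a direct translation of Proposition \ref{prop:3.3}.

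Next I would treat the two cases in turn. For part (a), $\mc G^*$ is a special unitary group splitting over an unramified quadratic extension $\tilde F/F$, so $\rho$ and $\rho_-$ are conjugate-dual representations of $\mb W_{\tilde F}$. Proposition \ref{prop:3.3}.b (third bullet) asserts that in the unramified case $\tilde \rho$ and $\tilde \rho_-$ always have opposite type, hence $a$ and $a_-$ have different parity. For part (b), if $\mc G^*$ is symplectic or special orthogonal, $\rho$ is self-dual and Proposition \ref{prop:3.3}.a (first bullet) gives that $\rho$ and $\rho_-$ have the same type whenever $t_\rho$ is odd. If $\mc G^*$ is a special unitary group split over a ramified extension $\tilde F/F$, then $\rho$ is conjugate-dual and Proposition \ref{prop:3.3}.b (first bullet) gives the same conclusion. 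In either situation $a$ and $a_-$ have the same parity.

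The only delicate point, which I would flag rather than prove from scratch, is the equivalence between the parity of $a$ (resp.\ $a_-$) and the matching of types of $\rho$ (resp.\ $\rho_-$) with $H^\vee$. This is the input from \cite{Moe1,Moe2,Moe3} summarised at the start of the present paragraph and already used in the proof of Theorem \ref{thm:3.2}; no additional computation is needed. Once this translation is made explicit, the lemma follows immediately from Proposition \ref{prop:3.3} with no further argument.
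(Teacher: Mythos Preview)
Your proposal is correct and follows essentially the same approach as the paper: both reduce the parity comparison of $a$ and $a_-$ to the type comparison of $\rho$ and $\rho_-$ via the remarks preceding Proposition \ref{prop:3.3}, and then invoke that proposition case by case. The only minor difference is that the paper's proof makes explicit the reduction from the general isogenous group $\mc G$ to $\mc G^*$ or $\mr{GSpin}_n$ (resp.\ to $U_n$ in the unitary case), whereas you absorb this into the reference to Theorem \ref{thm:3.2}; this is a presentational rather than a mathematical distinction.
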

\begin{proof}
Let $\rho \in \Irr (\mb W_F)$ be the image of $\tau$ under the LLC for $GL_{d_\tau}(F)$.
The functorial properties of the LLC ensure that $t_\rho = t_\tau$.\\
(b) Assume that $\mc G$ does not have Lie type ${}^2 A_{n-1}$. In the proof of Theorem 
\ref{thm:3.2} we saw how the issue can be reduced from $\mc G$ to $\mc G^*$ or $\mr{GSpin}_n$. 
To $\mc G^*$ and $\mr{GSpin}_n$ we apply Proposition \ref{prop:3.3}.a and the remarks above it.\\
(a) When $\mc G$ does have Lie type ${}^2 A_{n-1}$, Section \ref{sec:red} allows to reduce to 
$\mc G^* = SU_n$, and then to $U_n$. Now we apply Proposition \ref{prop:3.3}.b and the remarks
above it.\\
(c) It is clear that the labels in parts (a) and (c) of Theorem \ref{thm:3.2} are integers.
We recall that the labels in Theorem \ref{thm:3.2}.b are 
\[
\lambda (\alpha) = t_\rho f (a + a_- + 2)/2 \quad \text{and} \quad 
\lambda^* (\alpha) = t_\rho f (a - a_-)/2 .
\]
These are integers, except possibly when $a$ and $a_-$ have different parity. In the cases 
where $\mc G^*$ is an unramified special unitary group, $f = 2$ and again the labels are integers. 
In the other cases with $a$ and $a_-$ of different parity, part (b) of the current lemma tells us 
that $t_\rho$ is even, which makes the labels integral.
\end{proof}

Having checked Conjecture \ref{conj:1}.(i), we turn to Conjecture \ref{conj:1}.(ii).

\begin{lem}\label{lem:3.4}
Consider a root system of type $A_{e-1}, B_e, C_e$ or $D_e$, with label functions 
$\lambda,\lambda^*$ as in Theorem \ref{thm:3.2}. There exist:
\begin{itemize}
\item a simple group $\mc G$ over a nonarchimedean local field $\tilde F$,
\item a Bernstein block $\Rep (\tilde G)^{\mf s}$, which consists of unipotent representations
of $\tilde G = \mc G (\tilde F)$,
\item a $\mf s$-type $(J,\pi)$,
\end{itemize}
such that $\mc H (\tilde G,J,\pi)$ is an affine Hecke algebra with the given root system 
and the given label functions.
\end{lem}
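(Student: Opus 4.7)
The plan is to match, case by case, each label function that arises in \eqref{eq:3.1} with one that appears in Table \ref{fig:1}, and then to exhibit a concrete adjoint simple $p$-adic group together with a unipotent Bernstein block whose Hecke algebra realises the given root system and labels. Since Lusztig's classification \cite{Lus-Uni,Lus-Uni2} produces, for every adjoint simple $p$-adic group and every Bernstein block of unipotent representations, an affine Hecke algebra whose label functions are listed in Table \ref{fig:1}, the task reduces to a verification on each irreducible component of $\Sigma_{\mc O}^\vee$. We already established, using Lemma \ref{lem:3.10} and the discussion after \eqref{eq:3.1}, that the labels from Theorem \ref{thm:3.2} are integral and fit within the numerical classes of Table \ref{fig:1}.

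For type $A_{e-1}$ with $\lambda=\lambda^{*}=f$ (case (c)) I would take $\tilde{\mc G}=PGL_{e}(D)$ over a non-archimedean local field $\tilde F$, where $D$ is a central division algebra with $\dim_{\tilde F}D=f^{2}$: the Iwahori-spherical Bernstein block is unipotent and the associated Iwahori--Hecke algebra is affine of type $\tilde A_{e-1}$ with parameter $q_{\tilde F}^{f}$. For type $D_{e}$ in case (c) the residue degree $f$ must be $1$ (unitary $D$-components only appear for ramified $\tilde F/F$), so I would take $\tilde{\mc G}$ an adjoint form of split $\mr{SO}_{2e}$ with its Iwahori-spherical block.

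For a $C_{e}$ component in case (a), with long-root label $\lambda=\lambda^{*}=f a_{+}$ and short-root label $f$, I would take for $\tilde{\mc G}$ an adjoint split symplectic (or special orthogonal) group when $f=1$, and an adjoint unramified quasi-split unitary group when $f=2$, of rank large enough so that a Levi subgroup of the form $GL_{d_\rho}(F')^{e}\times \mc H^{*}$ admits a cuspidal unipotent representation on $\mc H^{*}$ whose Harish-Chandra $\mu$-function produces the long-root parameter $q_{F'}^{t a_{+}}$. For a $B_{e}$ component in case (b), the analogous construction on the classical group corresponding to $\mc H^{*}$ yields, via Moeglin's classification, the labels $\lambda=f(a+a_-+2)/2$, $\lambda^{*}=f(a-a_-)/2$ on short roots together with $\lambda=\lambda^{*}=f$ on long roots. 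In both cases, $(J,\rho)$ will be the type attached to the chosen cuspidal unipotent representation (in the sense of \cite{Mor1}, or equivalently the type associated to a depth-zero Bushnell--Kutzko pair).

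The main obstacle is the numerical matching: one must check that every integer $a_{+}$ satisfying $a_{+}^{2}\le 2N^{\vee}/d_\rho+1$, and every pair $(a,a_-)$ satisfying $\lfloor((a+1)/2)^{2}\rfloor+\lfloor((a_-+1)/2)^{2}\rfloor\le N^{\vee}/d_\rho$ with the correct parity prescribed by Lemma \ref{lem:3.10}, is realised by a cuspidal unipotent representation on a classical Levi subgroup $\mc H^{*}$ inside some adjoint simple $p$-adic group. This is precisely the content of Lusztig's classification of cuspidal unipotent representations of finite classical groups: they exist exactly on groups whose rank is an appropriate triangular number, and the available Jordan types on the Galois side exhaust the allowed $(a,a_-)$ and $a_{+}$. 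Once this dictionary between Moeglin's parameters and Lusztig's cuspidal data is spelled out (it is essentially the one used in \cite{Lus-Uni,Lus-Uni2,SolLLCunip} to set up the unipotent local Langlands correspondence), the lemma follows.
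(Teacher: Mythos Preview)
Your overall strategy—reduce to the label patterns in \eqref{eq:3.1}, then exhibit a unipotent Bernstein block realising each pattern—is the same as the paper's. Your treatment of types $A_{e-1}$ and $D_e$ is fine (the paper takes instead a split $GL_e$ or $SO_{2e}$ over a field $\tilde F$ with $q_{\tilde F}=q_F^{ft}$ and the Iwahori block, but your inner-form realisation for $A_{e-1}$ and your observation that $f=1$ for $D_e$ are equally valid).

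The gap is in the $B_e$ and $C_e$ cases. You propose to build the right labels by putting a cuspidal unipotent on a classical Levi factor $\mc H^*$ and invoking M\oe glin's parameters, then end with ``once this dictionary is spelled out, the lemma follows.'' But spelling out that dictionary \emph{is} the lemma: you must actually check that every admissible pair $(a,a_-)$ (with the parity constraint of Lemma \ref{lem:3.10}) and every admissible $a_+$ is hit by some cuspidal unipotent datum in Lusztig's lists. The paper does this concretely and avoids your circularity by first rescaling the $q$-base. For $C_e$ one takes $q_{\tilde F}=q_F^{ft}$, so the short-root label becomes $1$ and the long-root label $a_+$, and then cites \cite[7.40--7.42]{Lus-Uni} ($a_+$ even) or \cite[7.56]{Lus-Uni} ($a_+$ odd). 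For $B_e$ with $a+a_-$ even one again takes $q_{\tilde F}=q_F^{ft}$ and lands in \cite[7.38--7.39, 7.48--7.49, 7.54--7.55]{Lus-Uni}, split according to the parities of $\lambda(\beta)$ and $\lambda^*(\beta)$.

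You also miss a case that requires separate treatment: $B_e$ with $a+a_-$ odd. Here the short-root labels are genuine half-integers in the normalisation of \eqref{eq:3.1}, so one must instead take $q_{\tilde F}=q_F^{ft/2}$ (still an integral power of $q_F$ by Lemma \ref{lem:3.10}), giving long-root label $2$ and short-root labels $a+a_-+2,\,a-a_-$. These patterns are \emph{not} in \cite{Lus-Uni}; they come from non-split inner forms and are listed in \cite[11.2--11.3]{Lus-Uni2}. Your proposal, which stays with quasi-split classical groups and keeps the long-root label at $f$, does not reach them.
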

\begin{proof}
When the root system has type $A_{e-1}$ (resp. $D_e$) we take $G = GL_e$ (resp. $SO_{2e}$). 
Choose a non-archimedean local field $\tilde F$ with residue field of order $q_{\tilde F} = q_F^{ft}$,
where $f$ and $t = t_\tau$ are as in Theorem \ref{thm:3.2}.
We take the Iwahori-spherical Bernstein block and let $J$ be an Iwahori subgroup of $\tilde G$.
Then $(J,\mr{triv})$ is a type and $\mc H (\tilde G,J,\mr{triv})$ is an affine Hecke algebra 
with parameters $q_{\tilde F}$. We obtain labels $\lambda (\alpha) = \lambda^* (\alpha) = ft$.

Suppose that the root system is $C_e$ (or $B_e$ with $a_- = -1$, that boils down to the same thing).
We choose the $q$-base $q_F^{ft}$, which can be achieved by considering $\tilde F$-groups.
Thus reduce to the situation where the short root $\alpha$ has label 1 and the long root $\beta$ has 
label $a^+ \in \Z_{>0}$. Now see \cite[7.40--7.42]{Lus-Uni} when $a_+$ is even and \cite[7.56]{Lus-Uni} 
when $a_+$ is odd. 
In each case, a type for the associated Bernstein component is produced in \cite[\S 1]{Lus-Uni}.
%For $\lambda (\alpha) = 2$, see \cite[7.46--7.47]{Lus-Uni}.

Suppose that the root system is $B_e$ and that $a_- \geq 0$. Let $\beta$ be a short root and
$\alpha$ a long root. 
When $a + a_-$ is even we take the $q$-base $q_F^{ft}$ and we reduce to the labels 
\[
\lambda (\alpha) = 1 ,\quad \lambda (\beta) = (a + a_- + 2)/2 ,\quad \lambda^* (\beta) = (a - a_-)/2 .
\] 
Depending on the parities of $\lambda (\beta)$ and $\lambda^* (\beta)$, see
\cite[7.38--7.39]{Lus-Uni} (both even) or \cite[7.48--7.49]{Lus-Uni} (both odd) or
\cite[7.54--7.55]{Lus-Uni} (one even, one odd).

When $a + a_-$ is odd we take the $q$-base $q_F^{ft/2}$ (a power of $q_F$ by Lemma \ref{lem:3.10})  
and we reduce to the labels 
\[
\lambda (\alpha) = 2 ,\quad \lambda (\beta) = a + a_- + 2 ,\quad \lambda^* (\beta) = a - a_- . 
\]
See \cite[11.2--11.3]{Lus-Uni2} for an appropriate Bernstein
component consisting of unipotent representations.
\end{proof}

We covered all simple groups of type $A_n, {}^2 A_n$ or $B_n$, but some simple groups of Lie type 
$C_n, D_n$ or ${}^2 D_n$ remain. With the classification of inner twists via Galois cohomology
and the Kottwitz isomorphism \cite{Kot} we can count them, and realizations of those groups can be 
found in \cite[\S 17.2--17.3]{Spr}:
\begin{itemize}
\item the non-split (non-pure) inner twist of a symplectic group,
\item the two non-pure inner twists of a split even special orthogonal group,
\item the non-pure inner twist of a quasi-split even special orthogonal group,
\item groups isogenous to one of the above.
\end{itemize}
We note that (apart from the last entry) this list consists of classical groups associated to
Hermitian forms on vector spaces over quaternionic division algebras. 
As far as we are aware, much less is known about the representation theory of these groups. They 
are ruled out in \cite{Moe1,Moe2,Moe3}, so it is not clear which Hecke algebra labels can arise. 

For unipotent representations, this is known completely \cite{Lus-Uni,Lus-Uni2,SolLLCunip,SolRamif},
and that indicates that Theorem \ref{thm:3.2} might hold for these groups. The relevant label 
functions $\lambda, \lambda^*$, in the tables \cite[7.44--7.46 and 7.51--7.53]{Lus-Uni}, occur 
also in Theorem \ref{thm:3.2} (with $a - a_-$ odd, like for unitary groups).

\subsection{Groups of Lie type $G_2$} \
\label{par:G2}

Up to isogeny, there are three absolutely simple $F$-groups whose relative root system has
type $G_2$:
\begin{itemize}
\item the split group $G_2$,
\item the quasi-split group ${}^3 D_4$, which splits over a Galois extension $\tilde F / F$
of degree 3 or 6,
\item the non-split inner forms $E_6^{(3)}$, which split over the cubic unramified
extension $F^{(3)} / F$. 
\end{itemize}
Let $G = \mc G (F)$ denote the rational points of one of these groups. Let $M$ be a Levi
subgroup of $G$ and write $\mf s = [M,\sigma]_G, \mc O = X_\nr (M) \sigma$.
When the semisimple rank of $M$ is $\geq 1$, $\Sigma_{\mc O,\mu}$ has rank $\leq 1$. 
For those cases we refer to \eqref{eq:1.8}.

Otherwise $\mc M$ is a minimal $F$-Levi subgroup of $\mc G$.
For $G = G_2 (F)$, $\Rep (G)^{\mf s}$ consists of principal series representations. In Theorem
\ref{thm:3.9} we proved that $q_\alpha = q_F$ and $q_{\alpha*} = 1$ for $\alpha \in \Sigma_{\mc O,\mu}$.
For $G = {}^3 D_4 (F)$ $\Rep (G)^{\mf s}$ also belongs to the principal series. We showed in 
\eqref{eq:3.16} that $q_{\alpha*} = 1, q_\alpha = q_F$ for long roots $\alpha \in \Sigma_{\mc O,\mu}$
and $q_{\beta*} = 1, q_\beta \in \{q_F ,q_F^3\}$ for short roots $\beta \in \Sigma_{\mc O,\mu}$.
Notice that in $\Sigma_{\mc O}^\vee$ the lengths of the roots are reversed.

The group $G = E_6^{(3)}$ involves a central simple $F$-algebra $D$ of dimension $3^2 = 9$.
We assume for the moment that $\mc G$ is simply connected, so that we can apply some reduction steps
from Section \ref{sec:red} more easily.
For a short root $\alpha \in \Sigma_{\mc O,\mu}$, the inclusion $M \to M_\alpha$ is isogenous to 
\[
S(GL_1 (D)^2) \times GL_1 (F) \to S(GL_1 (D)^2) \times SL_2 (F).
\] 
In particular the coroot $\alpha^\vee$ is orthogonal to $M_\der$ and the restriction of $\sigma$
to the image of $\alpha^\vee$ is a direct sum of finitely many characters. Hence the same 
computations as in Paragraph \ref{par:split} apply here, with $M$ instead of $T$. Thus 
$q_\alpha = q_F$ and $q_{\alpha*} = 1$. On the other hand, for a long root 
$\beta \in \Sigma_{\mc O,\mu}$ the inclusion $M \to M_\beta$ is isogenous to 
\[
S(GL_1 (D)^2) \times GL_1 (F) \to SL_2 (D) \times GL_1 (F). 
\]
Again with Section \ref{sec:red} the computation of the parameters can be transferred to 
$GL_1 (D)^2 \to GL_2 (D)$, which is discussed in Paragraph \ref{par:A}. Then Theorem \ref{thm:3.1} 
and \eqref{eq:3.26} show that $q_{\beta*} = 1$ and $q_\beta = q_F^f$ where $f$ divides 3.
All this based on an $X_\alpha$ defined as evaluation at $\alpha^\vee (\varpi_F^{-1})$. 
We still have to take the effect of the isogenies
\begin{equation}\label{eq:3.27}
M_\beta \leftarrow SL_2 (D) \times GL_1 (F) \to GL_2 (D)
\end{equation}
into account. As worked out in Proposition \ref{prop:2.4}, this goes via changing $h_\alpha^\vee$.
Since the derived groups are simply connected, no $\alpha^\vee / 2$ can be involved, and 
this effect comes only from changes in the torsion number $|X_\nr (M,\sigma)|$. That boils 
down to the torsion number of a representation of $GL_1 (D)$, so it can only be 1 or 3. In
terms of cocharacter lattices both maps in \eqref{eq:3.27} are index 2 inclusions, and 2 is
coprime to 3, so actually the torsion numbers do not change along these inclusions.
We conclude that the labels are $\lambda (\alpha) = 1$ and 
$\lambda (\beta) \in \{1,3\}$ (and the same for $\lambda^*$).

When $\mc G$ is not simply connected, we can apply Proposition \ref{prop:2.2} to compare with 
its simply connected cover. If $\Sigma_{\mc O,\mu}$ has rank $>1$, then it is isomorphic to
$A_1 \times A_1$, $A_2$ or $G_2$. In the latter two cases we are not in the instances (ii) or
(iii) of Proposition \ref{prop:2.4}.d, so  Proposition \ref{prop:2.4}.d.(i) tells us that
the parameters do not change when we pass from $\mc G$ to its simply connected cover.
In the first case there could be a change as in Proposition \ref{prop:2.4}.d.(ii) when we
go to a cover of $\mc G$, but that does not bother us because we already understand affine 
Hecke algebras of type $A_1$ completely -- see the discussion before \eqref{eq:1.8}.

\subsection{Groups of Lie type $F_4$} \
\label{par:F4}

Just as for $G_2$ we will analyse all possibilities for the parameters, by reduction to
earlier cases. Up to isogeny there are three absolutely simple $F$-groups with relative root 
system of type $F_4$:
\begin{itemize}
\item the split group $F_4$,
\item the quasi-split group ${}^2 E_6$, split over a separable quadratic extension $F' / F$,
\item the non-split inner form $E_7^{(2)}$, split over the unramified quadratic extension $F^{(2)} / F$. 
\end{itemize}
Supported by Section \ref{sec:red}, we only consider the simply connected version of these groups.
We number the bases of $F_4$ and $E_7$ as follows:

\includegraphics[width=11cm]{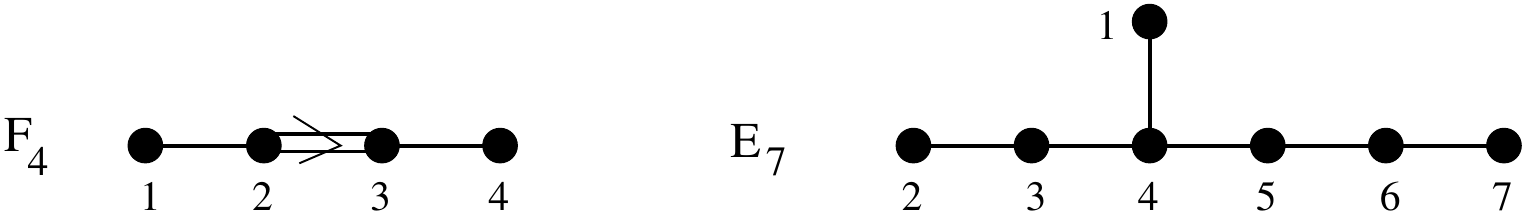}

Let $D$ be a central simple $F$-algebra of dimension $2^2 = 4$. The anisotropic kernel of $E_7^{(2)}(F)$
corresponds to the simple roots labeled $1,5,7$ and is isomorphic to $SL_1 (D)^3$. 

Let $G = \mc G (F)$ denote the rational points of one of the above groups. Fix a maximal $F$-split
torus $S = \mc S (F)$ and let $\Delta$ be a basis of $\Sigma (\mc G,\mc S)$.
Let $M_J = \mc M_J (F)$ be the standard Levi subgroup associated to $J \subset \Delta$. 
Write $\mf s = [M_J ,\sigma ]_G$ and $\mc O = X_\nr (M_J ) \sigma$. We will verify Conjecture 
\ref{conj:1} for $G,M_J$, except that in a few cases for $\mc G = E_7^{(3)}$ we cannot work it out.

Recall from \cite[Proposition 1.3]{Hei2} that $\alpha \in \Sigma_{\mc O,\mu}$ implies 
$s_\alpha \in W(G,M)$. Let $\Sigma_W (A_{M_J})$ be the set of $\alpha \in \Sigma_\red (A_{M_J})$
for which $W(G,M)$ contains $s_\alpha$. Such $s_\alpha$ belong to the normalizer of $W_J$ 
in $W(F_4)$, which links our setup to \cite{How}. It is shown in \cite[Theorem 6]{How}, that
$\Sigma_W (A_{M_J})$ is a root system. The type of $\Sigma_W (A_{M_J})$, as well as a lot 
of other useful data, are collected  in \cite[p. 74]{How}.\\[1mm]

\textbf{$\mathbf{J}$ is empty, $\mathbf{\Sigma_W (A_{M_J}) \cong F_4}$}\\
For $F_4$ and ${}^2 E_6$, $\Rep (G)^{\mf s}$ consists of principal series representations. 
For $G = F_4 (F)$ we proved in Theorem \ref{thm:3.9} that $q_\alpha = q_F$ and $q_{\alpha*} = 1$ for 
all $\alpha \in \Sigma_{\mc O,\mu}$. 

For $G = {}^2 E_6 (F)$, we showed in \eqref{eq:3.16} that $q_{\alpha*} = 1, q_\alpha = q_F$ for 
long roots $\alpha \in \Sigma_{\mc O,\mu}$ and $q_{\beta*} = 1, q_\beta \in \{q_F ,q_F^2\}$ for short 
roots $\beta \in \Sigma_{\mc O,\mu}$. Notice that in $\Sigma_{\mc O}^\vee$ the lengths of the roots 
are reversed.

For $G = E_7^{(2)}(F)$ and $\alpha \in \{\alpha_1, \alpha_2\}$, the inclusion $M_\emptyset \to
M_{\{\alpha\}}$ is isogenous to
\[
GL_1 (F)^2 \times S (GL_1 (D)^3) \longrightarrow GL_2 (F) \times S (GL_1 (D)^3) .
\]
The direct factors $S(GL_1 (D)^3)$ do not influence the rest, so can be ignored for the
computation of the parameters. It follows that $q_\alpha = q_F, q_{\alpha*} = 1$.

For $\alpha \in \{\alpha_3, \alpha_4\}$, we can instead consider the inclusion
\[
GL_1 (F)^2 \times S (GL_1 (D)^3) \longrightarrow GL_1 (F) \times S( GL_2 (D) \times GL_1 (D)) . 
\]
With Section \ref{sec:red} we reduce this to $GL_1 (D)^2 \to GL_2 (D)$, and then Paragraph
\ref{par:A} tells us that $q_{\alpha*} = 1$ and $q_\alpha \in \{ q_F, q_F^2\}$.\\[1mm]

\textbf{$\mathbf{J = \{\alpha_3\}}$ or $\mathbf{J = \{ \alpha_4 \}, 
\Sigma_W (A_{M_J}) \cong B_3}$}\\
These two $J$'s are $W(F_4)$-conjugate, so it suffices to consider $J = \{\alpha_4\}$. 
The parameters for $\alpha_1$ and $\alpha_2$ are the same as when $J$ is empty, so 
$q_{\alpha_1} = q_{\alpha_2} = q_F$ and $q_{\alpha_1 *} = q_{\alpha_2 *} = 1$.

The short simple root of $\Sigma_W (A_{M_J})$ is $\beta = \alpha_2 + 2 \alpha_3 + \alpha_4$,
which is orthogonal to $\alpha_2$ and $\alpha_4$. 
The inclusion $M_J \to M_{J \cup \{\beta\}}$ is isogenous to, depending on the type of $\mc G$:
\[
\begin{array}{cccc}
F_4 & GL_1 (F)^3 \times SL_2 (F) & \to & GL_1 (F)^2 \times SL_2 (F)^2 \\
{}^2 E_6 & GL_1 (F)^2 \times GL_1 (F') \times SL_2 (F') & \to & GL_1 (F)^2 \times SL_2 (F')^2 \\
E_7^{(2)} & GL_1 (F) \times GL_1 (D) \times GL_2 (D) & \to & GL_1 (F) \times SL_2 (D) \times GL_2 (D) 
\end{array}
\]
Again the determination of the $q$-parameters can be simplified with Section \ref{sec:red}. Then 
we see from Paragraph \ref{par:A} that $q_{\beta*} = 1$ and $q_\beta \in \{q_F,q_F^2\}$. \\[1mm]

\textbf{$\mathbf{J = \{\alpha_1\}}$ or $\mathbf{J = \{ \alpha_2 \}, \Sigma_W (A_{M_J}) 
\cong C_3}$}\\
These two $J$'s are $W(F_4)$-conjugate, so it suffices to consider $J = \{\alpha_1\}$.  
The parameters for $\alpha_3$ and $\alpha_4$ are the same as when $J$ is empty.

The long simple root of $\Sigma_W (A_{M_J})$ is $\beta = \alpha_1 + 2 \alpha_2 + 2 \alpha_3$, which is 
orthogonal to $\alpha_1$ and $\alpha_4$. The inclusion $M_J \to M_{J \cup \{\beta\}}$ is isogenous to:
\[
\begin{array}{cccc}
F_4 & SL_2 (F) \times GL_1 (F)^3 & \to & SL_2 (F)^2 \times GL_1 (F)^2 \\
{}^2 E_6 & SL_2 (F) \times GL_1 (F) \times GL_1 (F')^2 & \to & SL_2 (F)^2 \times GL_1 (F')^2 \\
E_7^{(2)} & SL_2 (F) \times GL_1 (F) \times S(GL_1 (D)^3) & \to & SL_2 (F)^2 \times S(GL_1 (D)^3) 
\end{array}
\]
In each of the three cases, this reduces to $GL_1 (F) \to SL_2 (F)$. Hence $q_\beta = q_F$ and 
$q_{\beta *} = 1$.\\[1mm]

\textbf{$\mathbf{J}$ equals $\mathbf{\{\alpha_1, \alpha_4\}}$ or $\mathbf{ \{ \alpha_1,
\alpha_3\}}$ or $\mathbf{\{\alpha_2, \alpha_4\}, \Sigma_W (A_{M_J}) \cong A_1 \times A_1}$}\\
These three subsets of $\Delta$ are associate under the Weyl group $W(F_4)$, so it suffices to 
consider $J = \{\alpha_1, \alpha_3\}$. Up to a sign there are just two possibilities for 
$\alpha \in \Sigma_{\mc O,\mu} \cong A_1 \times A_1$. These can be represented by $\alpha_2$ and 
$\beta = \alpha_2 + 2 \alpha_3 + 2 \alpha_4$. We note that $\beta$ is orthogonal to $\alpha_2$
and $\alpha_3$, but not to $\alpha_1$. The inclusion $M_J \to M_{J \cup \{\beta\}}$
is isogenous to:
\[
\begin{array}{cccc}
F_4 & GL_2 (F) \times GL_2 (F) & \to & SL_3 (F) \times GL_2 (F) \\
{}^2 E_6 & GL_2 (F) \times GL_2 (F') & \to & SL_3 (F) \times GL_2 (F') \\
E_7^{(2)} & GL_2 (F) \times S(GL_2 (D) \times GL_1 (D)) & \to & 
SL_3 (F) \times S(GL_2 (D) \times GL_1 (D)) 
\end{array}
\]
In all three cases this boils down to $GL_2 (F) \to SL_3 (F)$, so $q_\beta = q_F$ and
$q_{\beta *} = 1$.

We also list inclusions isogenous to $M_J \to M_{J \cup \{\alpha_2\}}$:
\[
\begin{array}{cccc}
F_4 & GL_2 (F) \times SO_3 (F) \times GL_1 (F) & \to & SO_7 (F) \times GL_1 (F) \\
{}^2 E_6 & GL_2 (F) \times SO_4^* (F) \times GL_1 (F) & \to & SO_8^* (F) \times GL_1 (F) \\
E_7^{(2)} & GL_2 (F) \times SO'_6 (F) \times GL_1 (D) & \to & SO'_{10}(F) \times GL_1 (D) \\
\end{array}
\]
Here $SO_{2n}^*$ denotes a quasi-split special orthogonal group, while $SO'_{2n}$ stands for a 
non-split inner form of $SO_{2n}$. For the parameter computations, the direct factors
$GL_1 (F)$ and $GL_1 (D)$ can be ignored. In all three cases Theorem \ref{thm:3.2}.b shows that
$q_{\alpha_2} = q_F^{t(a+1)/2}$ and $q_{\alpha_2 *} =  q_F^{t(a_- +1)/2}$, where $t \in \{1,2\}$.
A small correction might still come from the involved isogenies via Proposition \ref{prop:2.4}. 
\\[1mm]

$\mathbf{J = \{ \alpha_1, \alpha_2 \}, \Sigma_W (A_{M_J}) \cong G_2}$\\ 
Now $\alpha_3$ gives rise to a short root of $\Sigma_W (A_{M_J})$, and to a long root of 
$\Sigma_{\mc O}^\vee$. Analysing the inclusion 
$M_J \to M_{J \cup \{\alpha_4\}}$ up to isogeny, we obtain:
\[
\begin{array}{ccccc}
\text{group} & \multicolumn{3}{c}{\text{inclusion}} & q_{\alpha_4} \\
\hline 
F_4 & SL_3 (F) \times GL_1 (F)^2 & \to & SL_3 (F) \times GL_2 (F) & q_F\\
{}^2 E_6 & SL_3 (F) \times GL_1 (F')^2 & \to & SL_3 (F) \times GL_2 (F') & q_{F'} \\
E_7^{(2)} & SL_3 (F) \times SL_1 (D) \times GL_1 (D)^2 & \to & 
SL_3 (F) \times SL_1 (D) \times GL_2 (D) & q_F^f
\end{array}
\]
where $f \in \{1,2\}$. In all three cases $q_{\alpha_4 *} = 1$ by Theorem \ref{thm:3.1}.
Since $\alpha_4^\vee$ is orthogonal to $M_{J,\der}$, the computations behind these $q$-parameters
work equally well in $M_{J \cup \{\alpha_4\}}$, no corrections from isogenies are needed.

For $\alpha_3$ we find
\[
\begin{array}{ccccc}
\text{group} & \multicolumn{3}{c}{\text{inclusion}} & q_{\alpha_3} \\
\hline 
F_4 & GL_3 (F) \times GL_1 (F) & \to & SO_6 (F) \times GL_1 (F) & q_F^{t(a+1)/2} \\
{}^2 E_6 & GL_3 (F) \times SO_2^* (F) \times GL_1 (F') & \to & SO_8^* (F) \times GL_1 (F') & 
 q_F^{t(a+1)/2} \\
E_7^{(2)} & GL_3 (F) \times SO'_4 (F) \times GL_1 (D) & \to & 
SO'_{10} (F) \times GL_1 (D) &  q_F^{t(a+1)/2}
\end{array}
\]
where $t \in \{1,3\}$. The parameter $q_{\alpha_3 *}$ equals $q_F^{t(a_- + 1)/2}$.
Recall the bound on $a$ and $a_-$ from Theorem \ref{thm:3.2}. 

When $\Sigma_{\mc O,\mu}$ has type $G_2$, \cite[Lemma 3.3]{SolEnd} says that $q_{\alpha_3 *} = 1$.
Then $a_- = -1$ and Lemma \ref{lem:3.10} tells us that $a$ is odd. For $F_4$ and $E_6^{(2)}$ that 
means $a = -1$ and $q_{\alpha_3} = 1$, so that actually $\Sigma_{\mc O,\mu}$ does not have type $G_2$.
For $E_7^{(2)}$ it would still be possible that $a=1$, so that $q_{\alpha_3} = q_F^t$. But then
the Langlands parameter of a representation of $SO'_4 (F)$ would be the sum of a three-dimensional
and a one-dimensional representation of $\mb W_F$ which is not compatible with the isogeny to
$SL_1 (D)^2$. Hence this case does not arise, and we conclude that for $J = \{ \alpha_1, \alpha_2 \}$
the root system $\Sigma_{\mc O,\mu}$ has rank $\leq 1$.\\[1mm]

$\mathbf{J = \{ \alpha_3, \alpha_4 \}, \Sigma_W (A_{M_J}) \cong G_2}$\\
Now a long root of $\Sigma_W (A_{M_J})$ comes from $\alpha_1$, and in
$\Sigma_{\mc O}^\vee$ a short root comes from $\alpha_1$. The inclusion $M_J \to M_{J \cup
\{\alpha_1\}}$ is isogenous to:
\[
\begin{array}{cccc}
F_4 & GL_1 (F)^2 \times SL_3 (F) & \to & GL_2(F) \times SL_3 (F) \\
{}^2 E_6 & GL_1 (F)^2 \times SL_3 (F') & \to & GL_2 (F) \times SL_3 (F') \\
E_7^{(2)} & GL_1 (F)^2 \times SL_3 (D) & \to & GL_2 (F) \times SL_3 (D) 
\end{array}
\]
In each case the parameters can be analysed already with $GL_1 (F)^2 \to GL_2 (F)$, and Theorem
\ref{thm:3.1} tells us that $q_{\alpha_1} = q_F, q_{\alpha_1 *} = 1$.

Let us also consider the inclusion $M_J \to M_{J \cup \{\alpha_2\}}$ up to isogenies:
\[
\begin{array}{cccccc}
\text{group} & \multicolumn{3}{c}{\text{inclusion}} & q_{\alpha_2} & q_{\alpha_2 *} \\
\hline
F_4 & GL_1 (F) \times GL_3 (F) & \to & GL_1 (F) \times SO_7 (F) & q_F^{t(a+1)/2} & q_F^{t(a_- + 1)/2}\\
{}^2 E_6 & GL_1 (F) \times GL_3 (F') & \to & GL_1 (F) \times U_6 (F) & q_{F'}^{t(a+1)/2} &
q_F^{t(a_- + 1)/2} \\
E_7^{(2)} & GL_1 (F) \times GL_3 (D) & \to & GL_1 (F) \times SO_6 (D) & ? & ?
\end{array}
\]
Here $t \in \{1,3\}$ and by Theorem \ref{thm:3.2} $0 \geq a \geq a_- \geq -1$. 
When $\Sigma_{\mc O,\mu} \cong G_2$, we know from \cite[Lemma 3.3]{SolEnd}
that $q_{\alpha_2 *} = 1$. With Lemma \ref{lem:3.10} that implies $q_{\alpha_2} = 1$ for $F_4$
and for ${}^2 E_6$ if $F'/F$ is ramified. For ${}^2 E_6$ with $F' / F$ unramified, it is still
possible that $a = 0$, so that $q_{\alpha_2} = q_{F'}^{t/2} = q_F^t$. For the same reasons as
after \eqref{eq:3.27}, no corrections from isogenies are needed.
For $E_7^{(2)}$ the analysis involves quaternionic special orthogonal groups, a case
which remains open.\\[1mm]

$\mathbf{J = \{ \alpha_2, \alpha_3 \}, \Sigma_\red (A_{M_J}) \cong B_2}$\\
Here $\alpha_1$ gives rise to a long root and $\alpha_4$ to a short root of $\Sigma_\red (A_{M_J})$.
We assume that $\alpha_1, \alpha_4 \in \Sigma_{\mc O,\mu}$, otherwise $\Sigma_{\mc O,\mu}$ is 
isomorphic to a root subsystem of $A_1 \times A_1$ and the situation is simpler. 
We would like to say that in $\Sigma_\mc O^\vee$ the relation between the lengths of the roots is 
reversed, but that is not so obvious because $h^\vee_{\alpha_i}$ need not be exactly 
$\alpha_i^\vee (\varpi_F^{-1})$, maybe it has to scaled. 

Up to isogenies, the inclusions $M_J \to M_{J \cup \{\alpha_1\}}$ are:
\begin{equation}\label{eq:3.34}
\begin{array}{cccc}
F_4 & GL_1 (F) \times SO_5 (F) \times GL_1 (F) & \to & SO_7 (F) \times GL_1 (F) \\
{}^2 E_6 & GL_1 (F) \times SO_6^* (F) \times GL_1 (F') & \to & SO_8^* (F) \times GL_1 (F') \\
E_7^{(2)} & GL_1 (F) \times SO'_8 (F) \times GL_1 (D) & \to & SO'_{10}(F) \times GL_1 (D) \\
\end{array}
\end{equation}
Each of the involved isogenies is a twofold cover of the groups listed above, and on the left hand
side that covering does not involve the direct factor $GL_1 (F)$. Hence passing to that cover does
not change $X_\nr (M,\sigma) \cap X_\nr (GL_1 (F))$. For \eqref{eq:3.34} this intersection is trivial,
so also for the analogous setting inside $G$. This shows that 
\[
h_{\alpha_1}^\vee = \alpha_1^\vee (\varpi_F^{-1}) \in M / M^1.
\]
Now the parameters associated with $\alpha_1$ are given by Theorem \ref{thm:3.2}.b, namely
\[
\lambda (\alpha_1) = (a + a_- + 2) / 2 \quad \text{and} \quad
\lambda^* (\alpha_1) = (a - a_-) /2.
\]
Here $a \geq a_- \geq -1$ and $a \leq N^\vee$ with $N^\vee \in \{ 4,6,8\}$ depending on $\mc G$.

Up to isogenies, the inclusion $M_J \to M_{J \cup \{\alpha_4\}}$ is:
\begin{equation}\label{eq:3.35}
\begin{array}{cccc}
F_4 & GL_1 (F) \times Sp_4 (F) \times GL_1 (F) & \to & GL_1 (F) \times Sp_6 (F) \\
{}^2 E_6 & GL_1 (F) \times U_4 (F) \times GL_1 (F') & \to & GL_1 (F) \times U_6 (F) \\
E_7^{(2)} & GL_1 (F) \times SO_4 (D) \times GL_1 (D) & \to & GL_1 (F) \times SO_6 (D)  
\end{array}
\end{equation}
The same argument as for $\alpha_1$ shows that $h^\vee_{\alpha_4} = \alpha^\vee_4 (\varpi_F^{-1})$.
In the root system $\Sigma_{\mc O}^\vee$ we now have the short simple root $h^\vee_{\alpha_1}$ and
the long simple root $h^\vee_{\alpha_4}$. We recall from \cite[Lemma 3.3]{SolEnd} that 
$q_{\alpha_4 *} = 1$ and $\lambda (\alpha_4) = \lambda^* (\alpha_4)$. From \eqref{eq:3.35} and
Theorem \ref{thm:3.2} we deduce that $a_- = -1$ and $q_{\alpha_4} = q_F^a$, at least for $F_4$ and 
${}^2 E_6$. For $E_7^{(2)}$ this involves quaternionic special orthogonal groups, which we could 
not handle in Theorem \ref{thm:3.2}. As explained before Proposition \ref{prop:3.3}, $a$ is an 
odd integer. Moreover, Theorem \ref{thm:3.2} tells us that $(a+1)^2 / 4 \leq N^\vee \in \{5,4\}$. 
It follows that $a \leq 3$, and then
\[
\lambda (\alpha_4) = \lambda^* (\alpha_4) = (a+1)/2 \in \{1,2\} .
\]
We take this opportunity to point out a typo in \cite{Lus-Uni} relevant to us. Namely, when we run
the above arguments with $\sigma$ the unique supercuspidal unipotent representation of $M_J \subset 
G = F_4 (F)$, we obtain the parameters $\lambda (\alpha_1) = 2, \lambda^* (\alpha_1) = 1, 
\lambda (\alpha_4 ) =2$. In \cite[\S 7.31]{Lus-Uni} these are given as $\lambda (\alpha_1) = 3, 
\lambda^* (\alpha_1) = 1, \lambda (\alpha_4 ) = 3$. We already took this into account by not including
labels (3,3,1) for $B_2$ in Table \ref{fig:1}. \\[1mm]
 
$\mathbf{|J| = 3 \text{ or } |J| = 4}$\\
In these cases $\Sigma_{\mc O,\mu}$ has rank $\leq 1$, and we refer to \eqref{eq:1.8}.\\

Summarising: we checked our main conjecture for absolutely simple groups with relative root
system of type $F_4$, except that for the group $E_7^{(2)}$ we are not sure when 
$J = \{\alpha_3,\alpha_4\}$ or $J = \{\alpha_2,\alpha_3\}$. These cases can be settled once we
understand symplectic and special orthogonal groups of quaternionic type better.

\subsection{Groups of Lie type $E_6, E_7, E_8$} \
\label{par:E}

We consider simply connected $F$-split groups of type $E_n$. We number $E_6$ and $E_8$ 
(or rather their bases $\Delta$) as

\includegraphics[width=12cm]{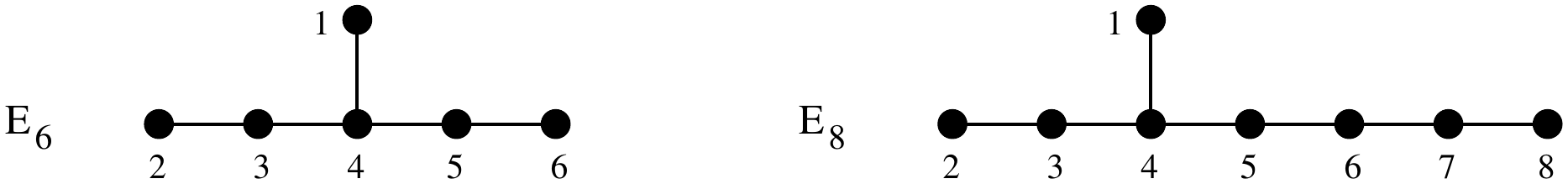}\\
and $E_7$ similarly (as on page \pageref{par:F4}).
The number of inequivalent Levi subgroups is quite large, which renders a case-by-case analysis as 
for $G_2$ and $F_4$ elaborate. An advantage is of course that all these Levi subgroups are simply 
connected and $F$-split, so the analysis of Hecke algebra parameters for $E_n$ consists of the 
principal series (dealt with in Paragraph \ref{par:split}) and contributions from split groups of 
lower rank. For Levi subgroups of semisimple rank $n-1$ the root system $\Sigma_{\mc O,\mu}$ has 
rank $\leq 1$, and before \eqref{eq:1.8} we discussed all such cases. 

For $E_6$ and Levi subgroups of semisimple rank at most 4, the $q$-parameters can be computed
via inclusions $M \to M_\alpha$ where $M_\alpha$ has semisimple rank at most 5. These $\mc M_\alpha$
are not exceptional, so the $q$-parameters can be found from Paragraphs \ref{par:A} and
\ref{par:classical}. For the irreducible components of $\Sigma_{\mc O,\mu}$ of type $A$, 
Conjecture \ref{conj:1} just says that every parameter $q_\alpha$ is a power of $q_F$. That is
readily verified in each case. Therefore we focus on the subsets $J \subset \Delta$ such that 
$\Sigma_W (A_{M_J})$ has a component of type $B_n, C_n, F_4$ or $G_2$.
The possible $J$ can be found by inspecting the tables on \cite[p.75--77]{How}. \\[1mm]

$\mathbf{J = \{ \alpha_1, \alpha_3 \}, \Sigma_W (A_{M_J}) \cong B_3}$\\
The long simple roots $\alpha \in \Sigma_W (A_{M_J})$ correspond to $\alpha_5, \alpha_6 \in 
\Delta$, which are orthogonal to $M_{J,\der}$. Hence the computations reduce to those in
Paragraph \ref{par:split}, and yield $q_{\alpha*} = 1, q_\alpha = q_F$ (or 
$\alpha \notin \Sigma_{\mc O,\mu}$).

The short simple root $\beta$ of $\Sigma_W (A_{M_J})$ comes from $\alpha_4 \in \Delta$.
Here $M_{\beta,\der} \cong SL_4 (F)$ and $M \cap M_{\beta,\der} \cong S(GL_1 (F)^2)$.
If $\beta \in \Sigma_{\mc O,\mu}$, then Corollary \ref{cor:3.11} associates to
$GL_1 (F)^2 \to GL_2 (F)$ the parameters $q_{\beta*} = 1$ and $q_\beta = q_F^f$ with
$f \in \{1,2\}$. Under the isogenies that transfer back to $M \to M_\beta$, $h_\beta^\vee$
remains equal to $\beta^\vee$, so the $q$-parameters do not change.\\[1mm]

$\mathbf{J = \{\alpha_1, \alpha_3, \alpha_4 \}, \Sigma_W (A_{M_J}) \cong B_2 }$\\
The long simple root $\alpha$ of $\Sigma_W (A_{M_J})$ comes from $\alpha_6 \in \Delta$,
which is orthogonal to $M_{J,\der}$. Hence $q_\alpha = q_F$ and $q_{\alpha*} = 1$.

The short simple root $\beta \in \Sigma_W (A_{M_J})$ comes from $\alpha_5 \in \Delta$.
Here $M_{\beta,\der} \cong \text{Spin}_8 (F)$ and $M \cap M_{\beta,\der}$ is a twofold
cover of $SO_6 (F) \times GL_1 (F)$. The $q$-parameters for this setting are known from
Theorem \ref{thm:3.2}:
\[
q_\beta = q_F^{(a+1)/2}, q_{\beta*} = q_F^{(a_- +1)/2} \text{ where } \Big\lfloor \big( 
\frac{a+1}{2} \big)^2 \Big\rfloor + \Big\lfloor \big( \frac{a_- +1}{2} \big)^2 \Big\rfloor \leq 6 ,
\]
so $a \leq 4$. When we apply Proposition \ref{prop:2.4} to $M_{\beta,\der} \to M_\beta$, the
parameters stay the same or (only when $a = a_-$) Proposition \ref{prop:2.4}.d.(iii) applies.\\[1mm]

$\mathbf{J = \{ \alpha_2, \alpha_3, \alpha_5,\alpha_6\}, \Sigma_W (A_{M_J}) \cong G_2}$\\
The long simple root $\alpha \in \Sigma_W (A_{M_J})$ comes from $\alpha_1 \in \Delta$.
That one is orthogonal to 
$M_{J,\der}$, so by Paragraph \ref{par:split} $q_\alpha = q_F$ and $q_{\alpha*} = 1$. 

The short simple root $\beta \in \Sigma_W (A_{M_J})$ comes from $\alpha_4 \in \Delta$.
Now $M_{\beta,\der} \cong SL_6 (F)$ and $M \cap M_{\beta,\der} \cong S(GL_3 (F)^2)$.
The same arguments as above for $J = \{\alpha_1,\alpha_3\}$ shows that here (if $\beta \in 
\Sigma_{\mc O,\mu}$) $q_{\beta*} = 1$ and $q_\alpha = q_F^f$ with $f \in \{1,3\}$.\\

Having checked Conjecture \ref{conj:1} for $E_6$, we turn to the simply connected split 
$F$-groups of type $E_7$ and $E_8$.
For most $J \subset \Delta$, the $q$-parameters of $\mc H (\mc O,G)$ can be analysed as before.
However, some $J$ behave like $\{\alpha_2, \alpha_3\}$ for $F_4$, where we found it hard to
relate the parameters of the two simple roots to each other. For other $J$ (only in $E_8$)
the computation of the $q$-parameters can only be reduced to inclusions of Lie type
$A_2 \times A_1 \times A_2 \to E_6$ or $D_6 \to E_7$ or $E_6 \to E_7$, and we do not know an
effective method in these cases. Therefore we settle for a modest goal:

\begin{lem}\label{lem:3.5}
For groups of Lie type $E_6, E_7$ or $E_8$, Conjecture \ref{conj:1} holds whenever the root 
system $\Sigma_{\mc O,\mu}$ has a component of type $F_4$. 
\end{lem}
\begin{proof}
From \cite[p.75--79]{How} one sees that in only very few cases $\Sigma_{\mc O,\mu}$ has a
component of type $F_4$. For any root $\alpha$ in a type $F_4$ root system, 
\cite[Lemma 3.3]{SolEnd} shows that $q_{\alpha*} = 1$, and then Proposition \ref{prop:2.4} 
entails that no involved isogeny can change the parameters. 

For $G = E_7 (F)$ there is only one $J$ with $\Sigma_W (A_{M_J}) \cong F_4$, namely
$J = \{\alpha_1, \alpha_5, \alpha_7\}$. The $q$-parameters can be obtained in the same way as 
for $E_7^{(2)}(F)$ and $J = \emptyset$, as treated in Paragraph \ref{par:F4}. The only difference
is that an inclusion $S (GL_1 (D)^2) \to SL_2 (D)$ must be replaced by an inclusion
$S (GL_2 (F)^2) \to SL_4 (F)$, but from Paragraph \ref{par:A} we know that exactly the same
$q$-parameters can occur for both these inclusions. Thus $q_\alpha = q_F, q_{\alpha*} = 1$ 
for any long root $\alpha \in \Sigma_{\mc O,\mu} \cong F_4$ and $q_{\beta*} = 1, q_\beta \in 
\{q_F, q_F^2\}$ for any short root $\beta \in \Sigma_{\mc O,\mu}$.

For $E_8 (F)$ and $J = \{ \alpha_1, \alpha_5, \alpha_7\}$ we also have $\Sigma_W (A_{M_J})
\cong F_4$. This case can be handled just as for $E_7$, and leads to the same $q$-parameters.

For $E_8 (F)$ and $J = \{ \alpha_1, \alpha_3, \alpha_5\}$ we have $\Sigma_W (A_{M_J}) \cong
F_4 \times A_1$. The long simple roots of $F_4$ come from $\alpha_7, \alpha_8 \in \Delta$.
These are orthogonal to $M_\der$, so $q_\alpha = q_F$ and $q_{\alpha*} = 1$. According to
\cite[p. 75]{How} the short simple roots $\beta$ of $F_4$ are associated to an inclusion
$S(GL_2 (F)^2) \to SL_4 (F)$. We can use the same $X_\beta$ as for $GL_2 (F)^2 \to GL_4 (F)$,
for which Corollary \ref{cor:3.11} shows that $q_{\beta*} = 1$ and $q_\beta \in \{ q_F, q_F^2 \}$.

The only remaining case with $\Sigma_W (A_{M_J}) \cong F$ is $J = \{ \alpha_1, \alpha_3,
\alpha_4, \alpha_5\}$. Like in the previous case $q_\alpha = q_F, q_{\alpha*} = 1$ for any
long simple root $\alpha \in \Sigma_{\mc O,\mu}$. Both short simple roots $\beta$ of $F_4$ come
from a non-simple root in $E_8$, for which $M \cap M_{\beta,\der} \to M_{\beta,\der}$ is 
isomorphic to the inclusion of a double cover of $SO_8 (F) \times GL_1 (F)$ in $\mr{Spin}_{10}(F)$.
According to Theorem \ref{thm:3.2} the resulting $q$-parameters are 
\[
q_\beta = q_F^{(a+1)/2} \text{ and } q_{\beta*} = q_F^{(a_- + 1)/2}, \text{ where }
\Big\lfloor \big(\frac{a+1}{2} \big)^2 \Big\rfloor + \Big\lfloor \big( \frac{a+1}{2} \big)^2 
\Big\rfloor \leq 8 .
\]
Since $\Sigma_{\mc O,\mu}$ has type $F_4$, $q_{\beta*} = 1$ and $a_- = -1$.
From Lemma \ref{lem:3.10} we know that $a$ and $a_-$ have the same parity, so $a$ is odd.
The estimate shows that $a < 5$, so $a \in \{1,3\}$ and $q_\beta \in \{q_F,q_F^2\}$ as desired.
\end{proof}

\end{document}